\newtheorem{theorem}{Theorem}[section]
\newtheorem{proposition}[theorem]{Proposition}
\newtheorem{lemma}[theorem]{Lemma}
\newtheorem{corollary}[theorem]{Corollary}
\theoremstyle{definition}
\newtheorem{definition}[theorem]{Definition}
\theoremstyle{remark}
\newtheorem{remark}[theorem]{Remark}
\newtheorem{remarks}[theorem]{Remarks}
\newcommand{\be}{\begin{equation}}
\newcommand{\ee}{\end{equation}}
\newcommand{\tpitchfork}{%
  \vbox{
    \baselineskip\z@skip
    \lineskip-.52ex
    \lineskiplimit\maxdimen
    \m@th
    \ialign{##\crcr\hidewidth\smash{$-$}\hidewidth\crcr$\pitchfork$\crcr}
  }%
}
\newcommand{\R}{{\bf R}}
\newcommand{\w}{{\bf w}}
\newcommand{\wf}{\mbox{WF}}
\newcommand{\bbC}{{\mathbb C}}
\newcommand{\bbQ}{{\mathbb Q}}
\newcommand{\bbN}{{\mathbb N}}
\newcommand{\bbR}{{\mathbb R}}
\renewcommand{\r}{\overline{r}}
\newcommand{\x}{\overline{x}}
\newcommand{\y}{\overline{y}}
\renewcommand{\w}{\overline{w}}
\newcommand{\calD}{{\mathcal D}}
\newcommand{\calB}{{\mathcal B}}
\newcommand{\calQ}{{\mathcal Q}}
\newcommand{\calK}{{\mathcal K}}
\newcommand{\calM}{{\mathcal M}}
\newcommand{\calF}{{\mathcal F}}
\newcommand{\calC}{{\mathcal C}}
\newcommand{\calR}{{\mathcal R}}
\newcommand{\calS}{{\mathcal S}}
\newcommand{\calT}{{\mathcal T}}
\newcommand{\frakY}{{\mathfrak Y}}
\newcommand{\tr}{{\mbox{Tr}}}
\newcommand{\rest}{\upharpoonleft}
\renewcommand{\Box}{\square}
  \DeclareSymbolFont{bbold}{U}{bbold}{m}{n}
\DeclareSymbolFontAlphabet{\mathbbold}{bbold}
  \newcommand{\prj}{\pi}
  \newcommand {\Oph}{\operatorname{Op}_h}
  \newcommand {\tpsi}{t_{\psi}}
  \newcommand{\pop}{\mathbb{P}}
  \newcommand{\popapp}{\pop_{\operatorname{appr}}}
  \newcommand{\gef}{F}
\newcommand{\tX}{\tilde{X}}
\newcommand{\spec}{\operatorname{spec}}
\newcommand{\mch}{\mathcal{H}}
\newcommand{\mchy}{\mathcal{H}_Y}
\newcommand{\supp} {\operatorname{supp}}
  \newcommand{\Xinf} {X_\infty}
  \newcommand{\Xtil}{\tilde{X}}
  \newcommand{\DY}{\Delta_Y}
  \newcommand{\DX}{\Delta_X}
\newcommand{\DXt}{\Delta_{\Xtil}}
\newcommand{\psp}{\psi_{sp}}
\newcommand{\sro}{(I-h^2\DY)^{1/2}}
  \DeclareSymbolFontAlphabet{\mathbbold}{bbold}
  \newcommand{\bbI}{\mathbbold{1}_{[0,1]}}
  \newcommand{\tS}{{S}}
\newcommand{\Ss}{S^{\#}}
\newcommand{\Ph}{P}
\newcommand{\Sphere}{\mathbb{S}}
\newcommand{\mcB}{\mathcal{B}}
\newcommand{\SU}{S_U}
\newcommand{\vol}{\operatorname{vol}}
\newcommand{\Z}{{\bf Z}}
\newcommand{\Natural}{\bf{N}}
\newcommand{\ts}{\tilde{s}}
 \newcommand{\C}{\bbC}
 \newcommand{\N}{\bbN}
\begin{document}

\title[Semiclassical structure of the scattering matrix]{The semiclassical structure of the scattering matrix for a manifold with infinite cylindrical end}

\author{T. J. Christiansen}
\address{Department of Mathematics, University of Missouri, Columbia, MO, USA}
\email{christiansent@missouri.edu}
\author{A. Uribe}
\address{Mathematics Department\\
University of Michigan\\Ann Arbor, MI, USA}
\email{uribe@umich.edu}

\begin{abstract}

We study the microlocal properties of the scattering
matrix associated to the semiclassical 
Schr\"odinger operator $P=h^2\Delta_X+V$ on a Riemannian
manifold with an infinite cylindrical end.
The scattering matrix at $E=1$
is a linear operator 
$S=\tS_h$ defined on a Hilbert subspace of $L^2(Y)$ that parameterizes the continuous spectrum of $P$ at energy $1$.  
Here $Y$ is the cross section of the end of $X$, which is not necessarily connected.
We show that, under certain assumptions, microlocally
$S$ is a Fourier integral operator associated to the graph of the scattering map $\kappa:\mathcal{D}_{\kappa}\to T^*Y$, with $\mathcal{D}_\kappa\subset T^*Y$.
The scattering map $\kappa$ and its domain $\calD_\kappa$ are 
determined by the Hamilton flow of the principal symbol of $P$.
As an application we prove that, under additional hypotheses on the scattering map,
the eigenvalues of the associated unitary scattering matrix
are equidistributed on the unit circle.  

\end{abstract}

\maketitle
\date{}


\section{Introduction} 

For certain Euclidean or asymptotically conic scattering problems it is known that the scattering matrix quantizes the scattering relation, a mapping determined by the bicharacteristic flow of the principal symbol of the operator in question, e.g. 
\cite{Ale05,Ale06,ABR,Ing, HW}.  
Here we consider this problem for a  class of manifolds with infinite cylindrical ends with an application to the 
equidistribution of phase shifts of the unitary scattering matrix.   Our results are related to results of  \cite{ZZ}, but are quite different in methodology and technically 
apply to different classes of manifolds.

Throughout this paper, $(X,g)$ will denote a smooth  connected 
Riemannanian manifold with {\em infinite cylindrical end}.   That is, $X$ has a decomposition as
$X=X_C\cup\Xinf$, where $X_C$ is a smooth compact manifold  with boundary $\partial X_C=Y$,
and $X_\infty\cong (-4,\infty)\times Y$.  More precisely, if we denote by $g_Y$
the restriction of		 $g$ to $TY=T\partial X_C$ (this
is a metric on $Y$), 
we assume that $\Xinf$ is isometric to  $(-4,\infty)\times Y$ 
with the product metric $(dr)^2+g_Y$ where $r$ is the natural coordinate on $(-4,\infty)$.
We do not necessarily assume that $Y$ is connected.   
For convenience, we extend $r$ to a smooth function on $X$, so that $r\leq -4$ on $X_C$. 
The (non-negative) Laplacians on $X$ and $Y$ are denoted by $\DX$, $\DY$ respectively.  

The purpose of this paper is to study the microlocal properties of the scattering
matrix associated to the semiclassical 
Schr\"odinger operator 
\[
P = h^2\Delta_X + V.
\]  
Here $V=V(h,x)=V_0(x)+h^2V_2(x)$, with $V_0,\; V_2\in C_c^\infty(X_C)$.
The scattering matrix 
is a linear operator 
$S=S(h):\mathbbold{1}_{[0,1]}(h^2\DY)L^2(Y)\rightarrow \mathbbold{1}_{[0,1]}(h^2\DY)L^2(Y)$
(where $\mathbbold{1}_{I}$ denotes the characteristic function of the interval $I$), whose definition
we recall in Section  \ref{ss:sm}.  The space 
\begin{equation}\label{eq:HY}
\mchy:= \bbI(h^2\DY)L^2(Y)=\{f\in L^2(Y) \mid \bbI(h^2 \DY)f=f\}
\end{equation}
parameterizes the continuous spectrum of $P$ at energy $1$.

We fix once and for all a semiclassical quantization scheme denoted $\Oph$, associating to compactly
supported smooth functions $\psi$ on $T^*Y$ semiclassical pseudodifferential operators $\Oph(\psi)$ on $L^2(Y)$.  
Our main Theorems,  \ref{thm:v1} and \ref{thm:v2}, state that, under certain assumptions on the
resolvent  $(\Ph-1-i0)^{-1}$,
for suitable functions $\psi\in C_c^\infty(T^*Y)$ the compositon 
$S\circ\Oph(\psi)$ is a Fourier integral operator associated to the graph of the scattering map $\kappa$.  We define $\kappa$ in Section \ref{ss:scattmap}.
Under some additional hypotheses, including that the set of fixed points of
$\kappa^m$ has measure zero for all $m=1,2\ldots$, 
we use these results to prove in Theorem \ref{thm:equidist} that the eigenvalues of the associated unitary scattering matrix $S_U$ (unitary on $\mch_Y$),
are equidistributed on $\Sphere^1$.

The main results are precisely stated in Section \ref{section:MainRs}.

\subsection{The scattering map}\label{ss:scattmap}
The  {\em scattering map} $\kappa$ is defined on an open subset $\calD_\kappa$
of the open unit tangent ball bundle  of $Y$,
$$\calB = \{(y,\eta)\in T^*Y \mid  |\eta|<1\}.$$
The map $\kappa$ is analogous to the scattering map of \cite{Ing},
and related to the scattering relation of \cite{Ale05, Ale06} and others. 

The definition involves
the Hamilton flow $\Phi_t$ of $p(x,\xi) = |\xi |^2 + V_0(x)$, the principal symbol of $P$, on $T^*X$.  Note that since 
$p_{\upharpoonright T^* \Xinf}=|\xi|^2$, the projections of the trajectories of $\Phi_t$ 
in $T^*\Xinf\subset T^*X$ to $\Xinf$ are geodesics on the 
product manifold $(-4,\infty)\times Y$.  

\begin{definition}\label{def:smap} 
A point $(y_-,\eta_-)\in\calB$ is in the domain $\calD_\kappa$ of the scattering map $\kappa$ 
if and only if  the
trajectory of $(0,y_-,-\sqrt{1-|\eta_-|^2},\eta_-)\in T^*\Xinf\subset T^*X$ under
the Hamilton flow $\Phi_t$ of $p$ is not forward trapped, that is, if 
and only if 
\[
\exists \; T>0 \;\text{so that}\; \forall \;t>T \qquad \Phi_t(  0,y_-,-\sqrt{1-|\eta_-|^2},\eta_-)\in X_\infty.
\]
 For
such $(y_-, \eta_-)$, there is a $t_+=t_+(y_-,\eta_-)>0$ and a 
$(y_+,\eta_+)=(y_+(y_-,\eta_-), \eta_+(y_-,\eta_-))\in T^*Y$ such that 
\[
\Phi_{t_+}(  0,y_-,-\sqrt{1-|\eta_-|^2},\eta_-)= (0,y_+,\sqrt{1-|\eta_+|^2}, \eta_+),
\]
and we define
\[\kappa(y_-,\eta_-):=(y_+,\eta_+).
\]
Thus $\kappa: \calD_\kappa\to\calB\subset T^*Y$.
\end{definition}
\begin{remarks}
Some remarks may be in order.  
	\begin{enumerate}
\item Under the hypotheses of the definition, 
let 
$(x(t),\xi(t))=\Phi_t(  0,y_-,-\sqrt{1-|\eta_-|^2},\eta_-)$.
Since $x(0)=(r(0),y(0))=(0,y_-)\in \Xinf$ and
$\dot{r}(0)<0$, $x(t)\in X_C$ for some 
$t>0$.  
The non-trapping condition means that at some later time the trajectory $(x(t),\xi(t))$ will
exit $T^*X_C$ and lie over $X_\infty$.


\item If $V\equiv 0$, the map $\kappa$ is the billiard map of $\{r\leq 0\}$,  a Riemannian manifold with boundary.

\item The scattering map does depend on the choice of 
decomposition of $X$ as $X=X_C\cup \Xinf$, since this choice determines
the location of the set $\{r=0\}\subset \Xinf$.  
We will see in Remark \ref{rmk:changeInKappa}
	that a different choice of origin for the $r$ coordinate
results in a scattering map $\kappa':\calD_{\kappa '}\to\calB$ which is of the form
\begin{equation}\label{eq:changingOrigin}
	\kappa' = \vartheta\circ\kappa\circ\vartheta \quad\text{and}\quad \calD_{\kappa '} = \vartheta^{-1}(\calD_\kappa),
	\end{equation}
for a certain canonical transformation $\vartheta: \calB\to\calB$.  (Note that $\kappa$ and $\kappa'$ are {\em not} conjugate.)

\item  Introduce the notation for all $ \overline{y}= (y,\eta)\in T^*Y,\ \overline{y}'=(y,-\eta)$.
Then, using the time-reversibility of the flow $\Phi_t$, it is
not hard to see that $\kappa ( \kappa (\overline{y})')=\overline{y}'$.
Therefore $\kappa$ is one-to-one.

\item Examples show  that $\calD_\kappa$
can be a proper subset of $\calB$. 

\end{enumerate}
\end{remarks}

\subsection{The scattering matrix}\label{ss:sm}

For a manifold with an infinite cylindrical end, the scattering matrix  for the operator $P=h^2\DX+V$ is a linear operator from $\mchy$ to itself, where $\mchy$ is 
defined in (\ref{eq:HY}).
 Thus the scattering matrix acts on a finite-dimensional space whose dimension 
increases as $h>0$ decreases, and thus can in fact be identified with 
a matrix, albeit one whose dimension changes with $h$.  In \cite{tapsit, chr95,par} the scattering matrix is defined via its entries in a 
particular basis.  It is more convenient here to take an approach like
 that is used in 
 the Euclidean or cylindrical end case in \cite[Sections 2.7, 7.3]{lrb}, defining the scattering matrix by its action on any element of $\mchy$.  That the two approaches yield the same 
 operator is well-known, easy to check, and is a consequence of our  proof of Lemma  \ref{l:tSwelldefined}.
 
 We also note that
 there are several conventions in the literature as to exactly which operator is referred to as the scattering matrix.  One, which 
we shall denote $S_U$, is normalized to be unitary on $\mchy$; this
is found in \cite{chr95, par}, for example.   We shall work primarily with the unnormalized scattering matrix that we denote $S$, found in \cite{tapsit}.  The two are related by 
$S_U=(I-h^2\DY)_+^{1/4}S(I-h^2\DY)_+^{-1/4}$, 
where $(\bullet)_+$ is the Heaviside function.
We shall refer to $S_U$ as the unitary scattering matrix.

Let $\sro$ be the operator on $L^2(Y)$ defined by the spectral theorem, with
non-negative real and imaginary parts. 
Suppose $\gef \in \langle r \rangle ^{1/2+\epsilon}H^2(X)$ for all $\epsilon>0$ and $\gef$ is in the null space of $\Ph-1$.  
 Suppose in
addition that $h^2$ is not the reciprocal of an eigenvalue of $\DY$.
Then on $\Xinf$ a separation of variables argument shows that we can write
\begin{equation}
\label{eq:uexp}
\gef\rest_{\Xinf}(r,y)=e^{-ir\sro/h}\bbI(h^2\DY)f_{-}+ e^{ir\sro/h}f_{+}
\end{equation}
for some functions $f_{\mp}\in L^2(Y)$.
We shall refer to $f_{-}$ as the incoming data, and $f_{+}$ as the outgoing data.
If  $1\not \in \spec(h^2\DY)$, then the (unnormalized) scattering matrix $S=S(h)$ is such that:
\begin{equation}\label{eq:tSdef}
S\left( \bbI(h^2\DY) f_{-}\right) := \bbI(h^2\DY)f_{+}.
\end{equation}

More precisely:
\begin{lemma}\label{l:tSwelldefined}
If $1\not \in \spec(h^2\DY)$, for every $f\in\mchy$ there exists $\gef\in \langle r \rangle ^{1/2+\epsilon}H^2(X)$  in the null space of $\Ph-1$
such that  (\ref{eq:uexp}) holds with $\bbI(h^2\DY) f_{-}=f$, and the relation
$S(f)= \bbI(h^2\DY)f_{+}$ defines an operator $S:\mchy\rightarrow \mchy$.
Moreover, if $1\in \spec(h_0^2\DY)$, then
 $\lim_{h'\uparrow h_0}S(h')$ exists as a bounded operator.
\end{lemma}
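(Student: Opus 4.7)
The plan is a quasimode-plus-correction argument driven by the limiting absorption resolvent. Given $f\in\mchy$, fix a smooth cutoff $\chi=\chi(r)$ with $\chi\equiv 1$ for $r\ge -1$ and $\chi\equiv 0$ for $r\le -2$, and set
\[
F_0(r,y) := \chi(r)\, e^{-ir\sro/h}f.
\]
Since $1\notin\spec(h^2\DY)$, the operator $\sro$ acts on the finite-dimensional space $\mchy$ as a self-adjoint operator with spectrum bounded away from zero, so $F_0$ is well-defined and uniformly bounded in $r$ as an $L^2(Y)$-valued function. Because $V\equiv 0$ on $\Xinf$ we have $\Ph|_{\Xinf}=-h^2\partial_r^2+h^2\DY$, and $e^{-ir\sro/h}f$ lies in the kernel of $\Ph-1$ on the end; thus $(\Ph-1)F_0$ is compactly supported in $r\in[-2,-1]$. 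Define
\[
F := F_0 - (\Ph-1-i0)^{-1}(\Ph-1)F_0.
\]
Under the paper's running hypothesis that $(\Ph-1-i0)^{-1}$ exists as a bounded map from compactly supported $L^2$ functions into $\langle r\rangle^{1/2+\epsilon}H^2(X)$ whose image is purely outgoing on the cylindrical end, $F\in\langle r\rangle^{1/2+\epsilon}H^2(X)$ and $(\Ph-1)F=0$.

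Separation of variables on $\Xinf$ shows that any such null solution admits, for $r$ sufficiently large, the expansion (\ref{eq:uexp}). The correction term $-(\Ph-1-i0)^{-1}(\Ph-1)F_0$ is purely outgoing by construction, so the incoming part of $F$ agrees with that of $F_0$, which on $\{r\ge -1\}$ equals $e^{-ir\sro/h}f$. Thus in the decomposition of $F$ one has $\bbI(h^2\DY)f_-=f$, and setting $S(f):=\bbI(h^2\DY)f_+$ defines a linear map $\mchy\to\mchy$; boundedness is automatic since $\mchy$ is finite-dimensional. To see that $S$ does not depend on the cutoff, note that any two choices produce solutions whose difference is a purely outgoing null solution in $\langle r\rangle^{1/2+\epsilon}H^2$; by the LAP hypothesis such a solution has vanishing low-mode outgoing data, so its $\bbI(h^2\DY)$-projection is zero. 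Applying the construction to a single eigenmode $f=\phi_k$ and reading off $f_+$ then recovers the matrix-entry definition of the scattering matrix used in \cite{tapsit,chr95,par}.

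For the continuity claim, let $h_0$ satisfy $1\in\spec(h_0^2\DY)$. For $h'\in(h_0-\delta,h_0)$ every mode propagating at $h_0$ is still propagating at $h'$, so $\mchy(h')$ contains $\mchy(h_0)$ and, for $\delta$ sufficiently small, equals it. The essential feature of the \emph{unnormalized} convention is that $F_0(h',\cdot)=\chi(r)e^{-ir\sqrt{I-h'^2\DY}/h'}f$ contains no factor $(1-h'^2\mu)^{-1/4}$ and depends continuously on $h'$ up to $h'=h_0$, where the threshold mode simply contributes a constant-in-$r$ factor. The main obstacle is to control the limiting resolvent through the threshold: the singular part of $(\Ph(h')-1-i0)^{-1}$ at an opening threshold is of order $(1-h'^2\mu)^{-1/2}$, but this singularity cancels when one reads off $f_+$ in the unnormalized convention --- this cancellation is precisely what distinguishes $S$ from $S_U$. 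Standard threshold analysis for cylindrical-end manifolds then yields a limit for $S(h')f$ for each $f\in\mchy(h_0)$, and finite-dimensionality upgrades this to convergence of $S(h')$ as bounded operators on $\mchy(h_0)$.
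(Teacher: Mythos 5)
Your overall strategy --- an approximate solution $\chi(r)e^{-ir\sro/h}f$ on the end corrected by the outgoing resolvent --- is the same as the paper's, but two steps rest on assertions that are not available and, as stated, are false in general. First, there is no ``running hypothesis'' that $(\Ph-1-i0)^{-1}$ exists as a bounded map on compactly supported $L^2$ functions into a weighted space. On a manifold with cylindrical ends, $1$ can be an embedded eigenvalue of $\Ph$ for sequences of $h$ (see Section \ref{ss:bulge}), and then $\lim_{\delta\downarrow 0}(\Ph-1-i\delta)^{-1}\tilde\chi$ does not exist. What is true (from \cite{tapsit} or \cite{CD2}) is that $\langle r\rangle^{-(1/2+\delta)}(\Ph-1-i0)^{-1}(I-\mathcal{PR}_1)\tilde\chi$ is bounded, where $\mathcal{PR}_1$ is the projection onto the eigenspace of $P$ at $1$. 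Your construction survives only because $(\Ph-1)F_0$ happens to be orthogonal to that eigenspace --- separation of variables shows every $L^2$ eigenfunction decays exponentially on $\Xinf$ and has vanishing $\bbI(h^2\DY)$-component there --- but this must be observed and used; it cannot be replaced by an unconditional limiting absorption principle.

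Second, well-definedness of $S$ requires showing that \emph{any} two solutions $F,\tilde F\in\langle r\rangle^{1/2+\epsilon}H^2(X)$ of $(\Ph-1)F=0$ with the same incoming data yield the same $\bbI(h^2\DY)f_+$; you only address independence of the cutoff within your construction, and the key claim --- that a purely outgoing null solution in the weighted space has vanishing propagating outgoing coefficients --- is attributed to ``the LAP hypothesis,'' which does not yield it. The actual mechanism is a boundary-pairing (Stokes/Green) identity: integrating $\bigl((\Ph-1)u\bigr)\overline{u}$ over $\{r<R\}$ and letting $R\to\infty$ forces $\sum_{h^2\sigma_j^2<1}\sqrt{1-h^2\sigma_j^2}\,|c_j|^2=0$, so the propagating coefficients vanish, the difference is an $L^2$ eigenfunction, and one again uses that eigenfunctions have no $\bbI(h^2\DY)$-component on the end. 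Your treatment of the threshold limit is likewise only a sketch (the claimed cancellation of the $(1-h'^2\mu)^{-1/2}$ singularity is asserted, not derived), though here the paper also defers to \cite{chr95} after identifying $S$ with the matrix $(\Ss_{jk})$ in the eigenbasis of $\DY$.
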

If $1\in \spec(h_0^2\DY)$, then we define
$S(h_0)=\lim_{h'\uparrow h_0}S(h')$.   

Although the results of Lemma \ref{l:tSwelldefined} are known (e.g. \cite{tapsit, chr95,par, lrb}), for the convenience of the reader we give a proof  in 
Section \ref{s:Posm}.  
Additionally, the proof shows the operator $S$ is (up to sign conventions) consistent with the 
non-unitary scattering matrices of \cite{tapsit,chr95,par}.

Like the scattering map, the scattering matrix depends on the choice of 
coordinate $r$ on the end, which corresponds to fixing the decomposition
$X=X_C\cup \Xinf$.  For example, if for $c_0>-4$ we instead 
write $X=X_C'\cup \Xinf'$, with $X_C'=X_C\cup\{x=(r,y)\in \Xinf \mid  -4<r\leq c_0\}$
and $\Xinf'=\Xinf \setminus \{x=(r,y)\in \Xinf \mid  -4<r\leq c_0\}$, then the coordinate in the new decomposition is $r'=r-c_0-4$. 
With $S'$ denoting the scattering matrix for the decomposition
$X_C'\cup \Xinf'$, $S'= e^{i(c_0+4)(I-h^2\DY)^{1/2}_+/h} S e^{i(c_0+4)(I-h^2\DY)^{1/2}_+/h}$.
Compare this with the corresponding change in the scattering map, (\ref{eq:changingOrigin}).


\subsection{ Main results} \label{section:MainRs}
In  our main theorem we assume that an appropriate
cut-off resolvent is bounded at high energy--this is hypothesis (\ref{eq:rbdhyp}) of
Theorem \ref{thm:v1}.  Section \ref{s:examples} contains examples
of manifolds and potentials for which 
this hypothesis holds,
and \cite[Theorem 3.1]{CDI} gives a technique for constructing
such
 manifolds.  Section \ref{s:examples} also contains examples for which the weaker resolvent bound (\ref{eq:rbdhyp2}) and the other
hypotheses of Theorem \ref{thm:v2} hold.

Throughout the paper, we use the notation $(P-1\pm i0)^{-1}= \lim_{\delta\downarrow 0} (P-1\pm i \delta)^{-1}$.

\begin{theorem}\label{thm:v1}
Suppose there are constants $C_0,\;N_0,\;h_0>0$ so that
\begin{equation}\label{eq:rbdhyp}
 \|\mathbbold{1}_{[0,1]}(h^2\DY)\mathbbold{1}_{[0,1]}(r)(\Ph-1-i0)^{-1}\mathbbold{1}_{(-\infty,0]}(r)\|\leq C_0h^{-N_0}\; \text{for $0<h\leq h_0$}.
\end{equation}  Let $\psi\in C_c^\infty(T^*Y)$ have its support in the domain of the scattering map.
Then  for $0<h<h_0$ $S\Oph(\psi) $ and $\SU\Oph(\psi)$ are
semi-classical Fourier integral operators associated with the graph of the scattering map $\kappa$.
\end{theorem}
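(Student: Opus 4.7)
The plan is to construct a semiclassical WKB parametrix $F_{\operatorname{appr}}$ for the generalized-eigenfunction problem with incoming data $\Oph(\psi)f$, upgrade it to the true eigenfunction using the resolvent bound (\ref{eq:rbdhyp}), and read off the outgoing data. Fix $f\in\mchy$, set $g:=\Oph(\psi)f$, and aim to identify $S\Oph(\psi)$ modulo $O(h^\infty)$ as an FIO quantizing the graph of $\kappa$.

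First I would exploit $\supp(\psi)\subset\calD_\kappa$ together with compactness of $\supp(\psi)$ to obtain a uniform return time $T>0$: every bicharacteristic of $p$ starting from a point of the incoming Lagrangian $\Lambda_-:=\{(0,y,-\sqrt{1-|\eta|^2},\eta):(y,\eta)\in\supp(\psi)\}$ lies in $T^*\Xinf$ for all $t>T$ and exits through $\{r=0\}$ in an outgoing direction at some time $t_+(y,\eta)\in[0,T]$. The flowout $\Lambda:=\bigcup_{t\in[0,T]}\Phi_t(\Lambda_-)\subset T^*X$ is a smooth Lagrangian whose outgoing trace on $T^*\Xinf|_{r=0}$ is $\operatorname{graph}(\kappa|_{\supp(\psi)})$. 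Using standard WKB machinery---local phase functions parameterizing $\Lambda$, Maslov factors at caustics of $\Lambda\to X$, transport equations for the principal half-density, and Borel summation of the asymptotic expansion---one produces $F_{\operatorname{appr}}\in C^\infty(X)$ satisfying $(P-1)F_{\operatorname{appr}}=E_h$ with $E_h\in C_c^\infty(X)$ supported in $X_C\subset\{r\leq 0\}$ and $\|E_h\|_{L^2}=O(h^\infty)$. Because the Hamilton flow on $\Xinf$ is free, the WKB expansion there is trivial: $F_{\operatorname{appr}}$ equals the exact incoming wave $e^{-ir\sro/h}g$ on the incoming portion of $\Xinf$, and equals $e^{ir\sro/h}(T_{\operatorname{appr}}f)(y)$ on the outgoing portion, where $T_{\operatorname{appr}}:\mchy\to\mchy$ is, by construction, a semiclassical FIO quantizing $\operatorname{graph}(\kappa)$.

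Next, (\ref{eq:rbdhyp}) gives $\|\bbI(h^2\DY)\bbI(r)(P-1-i0)^{-1}E_h\|_{L^2(X)}=O(h^\infty)$, since $E_h=O_{L^2}(h^\infty)$ is supported in $\{r\leq 0\}$. Separation of variables on $\Xinf\cap\{r\geq 0\}$, together with the outgoing radiation condition, decomposes $(P-1-i0)^{-1}E_h$ as $e^{ir\sro/h}c_+$ plus modes orthogonal to $\mchy$ (exponentially decaying in $r$); using unitarity of $e^{ir\sro/h}$ on $\mchy$ this yields $\|c_+\|_{L^2(Y)}=\|\bbI(h^2\DY)\bbI(r)(P-1-i0)^{-1}E_h\|_{L^2(X)}=O(h^\infty)$. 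Setting $F:=F_{\operatorname{appr}}-(P-1-i0)^{-1}E_h$ and reading off outgoing data via (\ref{eq:uexp}) gives $S\Oph(\psi)=T_{\operatorname{appr}}+O(h^\infty)$, the desired FIO. The statement for $\SU$ follows from $\SU=(I-h^2\DY)_+^{1/4}S(I-h^2\DY)_+^{-1/4}$: since $\supp(\psi)$ and $\kappa(\supp(\psi))$ are compact subsets of $\calB$, the functional-calculus factors act as semiclassical pseudodifferential operators on the relevant wavefront sets, and pseudodifferential composition on either side of an FIO preserves the FIO class and its underlying Lagrangian.

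The principal obstacle is the global WKB construction across caustics of $\Lambda\to X$: it requires a partition of unity on $\Lambda$ with differing local phase functions patched by Maslov factors, together with an explicit verification that the outgoing operator $T_{\operatorname{appr}}$ assembled from the WKB amplitudes is a semiclassical FIO whose principal symbol is the half-density transported along $\Phi_t$. A secondary technical point is matching the WKB solution with the exact pure waves on the incoming and outgoing portions of $\Xinf$, so that $E_h$ is truly supported in $X_C$ and the bound (\ref{eq:rbdhyp}) applies directly without additional cutoffs.
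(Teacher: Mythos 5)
Your overall strategy---build an approximate generalized eigenfunction with prescribed incoming data $\Oph(\psi)f$, correct it by applying $(\Ph-1-i0)^{-1}$ to the $O(h^\infty)$ error, use the hypothesis (\ref{eq:rbdhyp}) plus separation of variables on the end to show the correction contributes $O(h^\infty)$ to the outgoing data in $\mchy$, and then read off $S\Oph(\psi)$ as the outgoing-data map of the parametrix---is exactly the skeleton of the paper's argument (Lemmas \ref{l:Euhm}, \ref{l:endbds} and Proposition \ref{p:smident}). Where you genuinely diverge is in how the parametrix is built. The paper avoids a global WKB construction on the flowout Lagrangian: it glues the exact incoming/outgoing resolvents of $h^2\Delta_{\tX}$ on the product $\R\times Y$ to the exact propagator term $\chi_M\int_0^{\tpsi}e^{it/h}e^{-it\Ph/h}U_-\,dt$, so that every factor ($R_-$, $T_+$, $e^{-it\Ph/h}$, the product resolvents) is an operator already known to be an FIO, and the only FIO-theoretic work is a clean-composition verification (Proposition \ref{p:mainML}). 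Your route instead constructs a single Lagrangian distribution on the flowout of $\Lambda_-$ with Maslov factors; this buys a more geometric picture and avoids the propagator, but it pushes all the difficulty into the two points you yourself flag, and those are not minor: (a) the global patching across caustics, and (b) the assertion that the outgoing trace of the parametrix is \emph{exactly} $e^{ir\sro/h}(T_{\operatorname{appr}}f)$ with $T_{\operatorname{appr}}$ an FIO for $\operatorname{graph}(\kappa)$. Point (b) is not a bookkeeping step: identifying the operator $f\mapsto g_+$ from the restriction of the flowout Lagrangian to $\{r=0\}$ requires precisely the transversality/clean-intersection analysis that the paper isolates in Proposition \ref{p:mainML} (using the Poincar\'e sections $\calT_\pm$ and the splitting $T\calC_+=T\calT_+\oplus\bbR\Xi$). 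As written, your proposal asserts the conclusion of that proposition rather than proving it.

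Two smaller points. First, to apply (\ref{eq:rbdhyp}) with the cutoff $\mathbbold{1}_{(-\infty,0]}(r)$ you need the error genuinely supported in $\{r\le 0\}$; this forces the parametrix to agree with the exact pure waves to all orders before $r=0$, which is the matching issue you mention (the paper instead allows the error to live out to $r\le M-1$ and compensates with Lemmas \ref{l:reX1} and \ref{l:reX2}). Second, you should note that $E_h=(\Ph-1)F_{\operatorname{appr}}$ is automatically orthogonal to the $L^2$ eigenfunctions of $\Ph$ at eigenvalue $1$ (integrate by parts against an exponentially decaying eigenfunction), so that $(\Ph-1-i0)^{-1}E_h$ is well defined even when $1\in\operatorname{spec}(\Ph)$; without a remark of this kind the correction step is not justified at those values of $h$.
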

Proposition \ref{p:smident}
gives a more explicit expression for the scattering matrix using the 
Schr\"odinger propagator and some operators which map
between $L^2(Y)$ and  $L^2(\Xinf)$.
This explicit expression shows how the scattering matrix is a quantum analog
of the scattering map defined in Section \ref{ss:scattmap}; see also Section \ref{s:idea}.

We remark here that there is some flexibility in choosing the exact cut-offs in (\ref{eq:rbdhyp}): we could replace  $\mathbbold{1}_{[0,1]}(r)$ by $\mathbbold{1}_{[b,c]}(r)$
and $\mathbbold{1}_{(-\infty,0]}(r)$ by $\mathbbold{1}_{(-\infty,a]}(r)$ if 
$-4<a<b<c<\infty$.  Although we do not prove this, Section \ref{s:reX} proves some results in this direction.

A more restrictive assumption on the manifold and operator than in Theorem \ref{thm:v1} allows us to 
make a weaker assumption on the resolvent bound.   In this next theorem we 
assume that $X$ is diffeomorphic to $\R\times Y_0$, but we do not assume
that the metric is globally a product metric. In Section \ref{s:examples}
 we give
 two families of examples for which the metrics on $X$ have a warped product structure
and the resolvent for $P=h^2\DX$ satisfies
 the estimate (\ref{eq:rbdhyp2}), but which
have quite different trapping  properties and quite different quantitative behavior of the eigenvalues of $\DX$.
\begin{theorem} \label{thm:v2} Let $(Y_0,g_{Y_0})$ be a smooth compact Riemannian manifold,
and let $g$ be a metric on $X=\R\times Y_0$ which is the product
metric $(dr)^2+g_{Y_0}$ outside of a compact set. Let $P=h^2\DX+V$ satisfy $[P,\Delta_{Y_0}]=0$.
Suppose for any $\epsilon>0$ there are constants $C_0=C_0(\epsilon),\;N_0=N_0(\epsilon),\;h_0=h_0(\epsilon)>0$ so that
\begin{equation}\label{eq:rbdhyp2}
 \|\mathbbold{1}_{[0,1-\epsilon]}(h^2\DY)\mathbbold{1}_{[0,1]}(r)(\Ph-1-i0)^{-1}
\mathbbold{1}_{(-\infty,0]}(r)\|\leq C_0h^{-N_0}\; \text{for $0<h\leq h_0$}.
    \end{equation}
Let $\psi\in C_c^\infty(T^*Y)$ have its support in the domain of the scattering map.
Then  for $0<h<h_0$ $S\Oph(\psi) $ and $\SU\Oph(\psi)$ are
 semi-classical Fourier integral operators associated with the graph of the scattering map $\kappa$. 
\end{theorem}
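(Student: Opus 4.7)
The plan is to reduce Theorem \ref{thm:v2} to the proof scheme of Theorem \ref{thm:v1} by using the commutation $[P,\Delta_{Y_0}]=0$ to confine the entire argument to a spectral subspace of $\Delta_{Y_0}$ strictly below the threshold $1$. Since $\supp(\psi)\subset\calD_\kappa$ is compact in $\calB=\{|\eta|<1\}$, there is an $\epsilon_0>0$ with $\supp(\psi)\subset\{|\eta|^2\le 1-\epsilon_0\}$. Pick $\chi\in C_c^\infty((-2,1-\epsilon_0/2))$ with $\chi\equiv 1$ on $[-1,1-\epsilon_0]$. Because the principal symbol of $\chi(h^2\DY)$ is $\chi(|\eta|^2)\equiv 1$ on $\supp(\psi)$, the semiclassical functional calculus produces
\begin{equation*}
\Oph(\psi)=\chi(h^2\DY)\,\Oph(\psi)+O_{L^2\to L^2}(h^\infty),
\end{equation*}
so it suffices to show $S\chi(h^2\DY)\Oph(\psi)$ is a semiclassical FIO associated to the graph of $\kappa$. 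The analogous conclusion for $\SU\Oph(\psi)$ then follows from $\SU=(I-h^2\DY)_+^{1/4}S(I-h^2\DY)_+^{-1/4}$, because on the range of $\chi(h^2\DY)$ each factor $(I-h^2\DY)_+^{\pm 1/4}$ is an elliptic semiclassical pseudodifferential operator with symbol uniformly bounded above and below in $h$.

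The key step is spectral decoupling. Using the globally defined $Y_0$ coordinate, extend $\chi(h^2\Delta_{Y_0})$ to an operator on $L^2(X)$. By the hypothesis $[P,\Delta_{Y_0}]=0$ it commutes with $P$ and with $(\Ph-1-i0)^{-1}$, and it trivially commutes with every cutoff in $r$. In this geometry the cross section $Y$ is isometric to a disjoint union of copies of $Y_0$ so that, on the product end, $\chi(h^2\Delta_{Y_0})$ on $L^2(X)$ restricts to $\chi(h^2\DY)$ on $L^2(Y)$. Consequently, for any $f\in L^2(X)$ with $\chi(h^2\Delta_{Y_0})f=f$,
\begin{equation*}
\mathbbold{1}_{[0,1]}(h^2\DY)\mathbbold{1}_{[0,1]}(r)(\Ph-1-i0)^{-1}\mathbbold{1}_{(-\infty,0]}(r)f=\mathbbold{1}_{[0,1-\epsilon_0/2]}(h^2\DY)\mathbbold{1}_{[0,1]}(r)(\Ph-1-i0)^{-1}\mathbbold{1}_{(-\infty,0]}(r)f,
\end{equation*}
whose norm is bounded by $Ch^{-N_0}\|f\|$ by (\ref{eq:rbdhyp2}) applied with $\epsilon=\epsilon_0/2$. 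Thus, on the range of the projector $\chi(h^2\Delta_{Y_0})$, the weaker hypothesis (\ref{eq:rbdhyp2}) reproduces exactly the full bound (\ref{eq:rbdhyp}) of Theorem \ref{thm:v1}.

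To finish, re-run the proof of Theorem \ref{thm:v1} with $\chi(h^2\DY)\Oph(\psi)$ as input. Every operator appearing in that proof---cutoff resolvents, parametrices, and the Schr\"odinger-propagator factors underlying the explicit description in Proposition \ref{p:smident}---commutes with $\chi(h^2\Delta_{Y_0})$ thanks to $[P,\Delta_{Y_0}]=0$, so each invocation of (\ref{eq:rbdhyp}) can be flanked by this projector and replaced by the equivalent bound from (\ref{eq:rbdhyp2}). The FIO structure associated to the graph of $\kappa$ then transfers verbatim. The main obstacle is the bookkeeping: one must re-examine the proof of Theorem \ref{thm:v1} and verify that every $r$-cutoff used there depends only on $r$ (so commutes with the spectral projector) and that each parametrix construction remains intact after spectral restriction. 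The authors' remark after Theorem \ref{thm:v1} about flexibility in the choice of $r$-cutoffs suggests no substantive obstruction arises.
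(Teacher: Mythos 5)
Your proposal is correct and is essentially the paper's own argument: the paper likewise exploits $[P,\Delta_{Y_0}]=0$ to confine the resolvent estimate to the spectral window $\mathbbold{1}_{[0,1-\epsilon]}(h^2\DY)$ (this is exactly the second case in the proof of Proposition \ref{p:smident}, yielding (\ref{eq:smsecond}) in place of (\ref{eq:smfirst})), rather than front-loading the projector as you do; the difference is organizational, not substantive. One point you should make explicit: to pass from ``$S\chi(h^2\DY)\Oph(\psi)$ is an FIO'' to the claim for $S\Oph(\psi)$ you must absorb $S\cdot O(h^\infty)$, which requires an $h$-independent (or at least polynomial) bound on $\|S\|$; under the hypothesis $[P,\Delta_{Y_0}]=0$ this is immediate since $S$ then commutes with $\DY$ and hence coincides with the unitary $S_U$, giving $\|S\|=1$ — the paper states and uses exactly this fact.
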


These two theorems are proved by combining the results of Propositions \ref{p:mainML} and \ref{p:smident}.

In Section \ref{s:eps} we use these theorems to prove Theorem \ref{thm:equidist}.  This
shows that  under some additional hypotheses  in the semiclassical limit the eigenvalues of the 
unitary scattering matrix  $S_U$ are equidistributed.

We now comment on the resolvent estimates, (\ref{eq:rbdhyp}) and (\ref{eq:rbdhyp2}).
In Euclidean or hyperbolic scattering settings
 bounds on a cut-off resolvent of a 
semiclassical operator are well known under non-trapping assumptions on the
bicharacteristic flow of the associated Hamiltonian.   Moreover, 
some estimates are known under assumptions that the trapping is relatively mild; see, for example, \cite[Section 3]{Zwo} for a recent survey.  
All of the operators we consider here have nontrivial
trapping, as each geodesic in $Y$ corresponds to a trapped bicharacteristic
of $P$ in $\{p=1\}\cap\{r=c\}\subset T^*\Xinf$ for any $c>-4$. 

 For 
manifolds with infinite cylindrical ends, $(\Ph-1-i0)^{-1}=(h^2\DX+V-1-i0)^{-1}$
can have poles for a sequence of $h_j\downarrow 0$.
  For example, 
let $(Y_0,g_0)$ be a compact Riemannian manifold, and 
consider 
the simplest case $X=\R\times Y_0$ with the product metric.  Then
for any nontrivial $\chi \in C_c^\infty(X)$, $\chi (h^2\DX-1-i0)^{-1}\chi$
has a pole whenever $1/h^2$ is an eigenvalue of $\Delta_{Y_0}$--
though in this case  including a spectral projection in $\DY$ as is done
in (\ref{eq:rbdhyp}), as well as a spatial cut-off,
 is enough to ensure a bound which is polynomial in $h$.
Theorem 3.1 of \cite{CDI} gives a technique of constructing manifolds $(X,g)$
and operators $P=h^2\DX+V$ so that for any $\chi \in C_c^\infty(X)$, 
$\| \chi(P-1-i0)^{-1}\chi\|$ is polynomially bounded in $h$.  In 
Section \ref{s:examples} below we give some examples, most using results
from \cite{CDI}, for which (\ref{eq:rbdhyp}) or (\ref{eq:rbdhyp2}) holds.

 In an effort to simplify the exposition, our results are for the scattering matrix at fixed energy $1$, with corresponding hypotheses (\ref{eq:rbdhyp}) and (\ref{eq:rbdhyp2}) 
on the resolvent at energy $1$.  However, as is well known a rescaling can be used to prove corresponding results at other positive energies.  Let $E>0$,
and write $P-E=E(\frac{1}{E}P-1)= E(\frac{1}{E}(\DX+V)-1)$.  Setting $(X',g')=(X, Eg)$, we have $\Delta_{X'}=\frac{1}{E}\DX$.  By defining $r'=E^{-1/2}(r+4) -4$, we see that we can decompose $X'=X_C'\cup X'_\infty$ so that $g'\upharpoonleft_{X'_\infty}= (dr')^2+Eg_Y$, as required in our definition of a manifold with infinite cylindrical end.  
Then  results for the scattering matrix of $P'=\Delta_{X'}+\frac{1}{E}V$ at energy $1$ then imply results for the scattering matrix of $P$ at energy $E$.

\subsection{Idea of the proof}  \label{s:idea}

In order to prove the theorem, we construct the {\em Poisson operator} $\pop$, or,  more precisely, the 
Poisson operator multiplied on the right by $\Oph(\psi)$, $\pop\Oph(\psi)$.    We define the Poisson operator below, and show in Section \ref{s:Posm} that it is in fact well-defined.  

\begin{definition} \label{d:pop} Suppose $1\not \in \spec(h^2\DY)$.  The Poisson operator is a linear operator 
$\pop:L^2(Y)\rightarrow \langle r\rangle ^{1/2+\delta} H^2(X)$ for any $\delta>0$
so that for $f\in L^2(Y)$, 
$(\Ph-1)\pop f=0$ and $\pop f$ has specified incoming data:
\begin{equation}
\label{eq:poponend}
(\pop f)\upharpoonleft_{ \Xinf}= e^{-ir \sro /h} \mathbbold{1}_{[0,1]}(h^2\DY)f+ e^{ir \sro /h}f_+
\end{equation}
for some $f_+\in L^2(Y)$.  Moreover,  we require that $\langle \pop f, g\rangle=0$ for any $L^2$ eigenfunction $g$  of $P$ with eigenvalue $1$.
\end{definition}
We note 
a separation of variables on the end $\Xinf$ shows that any 
$L^2$ eigenfunction of $P$ must be exponentially decreasing on $\Xinf$, so that its product with an element of $\langle r \rangle^{1/2+\delta} L^2(X)$ is integrable.
Thus the pairing $\langle \pop f, g \rangle$ which we take to mean  $\langle \pop f, g \rangle=\langle  \langle r\rangle ^{-1/2+\delta} f, \langle r \rangle^{1/2+\delta} g \rangle$ makes sense.
Without the restriction involving the eigenfunctions with eigenvalue $1$, $\pop$ is not uniquely determined at values of $h$ for which $1$ is an eigenvalue of $P=P(h)$. 

By the definition of the scattering matrix,
$\tS \mathbbold{1}_{[0,1]}(h^2\DY)f =\mathbbold{1}_{[0,1]}(h^2\DY)f_+$, where $f_+$ is as in (\ref{eq:poponend}).

We will now outline the ideas behind the microlocal construction of $\pop\Oph(\psi)$ below, omitting details here for clarity.

In the construction of our initial approximation of $\pop \Oph(\psi)$ we shall use cut-off functions to
 piece together three terms: on the end $\Xinf$ we use both the incoming and the outgoing resolvents on 
the product $\tX=\R \times Y$, and on a compact 
subset of $X$ we use (roughly) $\int_0^{\tpsi} e^{it/h}e^{-it\Ph/h} dt$.   The time $\tpsi$ is chosen to ensure the bicharacteristics  of the Hamiltonian flow that 
start at points $(0,y, -\sqrt{1-|\eta|^2},\eta)\in T^*\Xinf\subset T^*X$  with $(y,\eta)$ in the support of $\psi$ have returned to the portion of $T^*X$ with $r\geq 0$ (thus lying in $T^*\Xinf)$
 by time $\tpsi$.  In other words, $t_{\psi}\geq \sup_{(y,\eta)\in \supp \psi}t_{+}(y,\eta)$, where $t_+$ is the function defined in Section \ref{ss:scattmap}.

In studying the outgoing and incoming resolvents on the product manifold $\tX=\R\times Y$, the operators
\begin{equation}\label{natMappings}
T _{\pm} f = \int_{\R}
e^{\mp ir' (\sro)/h}f(r',\bullet)dr',\; T_{\pm}: L^2_c(\R\times Y)\rightarrow L^2(Y)
\end{equation}
arise naturally.   In order to ensure our approximation to 
$\pop\Oph(\psi)$ has the desired incoming data, we shall need a right inverse of $T_-$.  Let $\chi\in C_c^\infty
((-1/4,0);\R_+)$ satisfy $\int \chi(r)dr=1$, and set, for $g\in C^\infty(Y)$,
\begin{equation}\label{rInverse}
R_{\pm}g= \chi(r )e^{\pm ir(I-h^2\DY)^{1/2}_+/h}g,
\end{equation}
so that $T_\pm R_\pm=I$.
We shall apply the incoming resolvent on $\R\times Y$ to $\sro R_-\Oph(\psi)$, and then the outgoing resolvent to an operator determined by $e^{-i\tpsi \Ph/h}\sro R_-\Oph(\psi)$.
Our assumptions on the resolvent in Theorem \ref{thm:v1} or \ref{thm:v2} ensure that the approximation to the Poisson operator which we construct is in fact close to the genuine one.  Proposition
\ref{p:smident} gives, up to small error, an explicit expression for the (cut-off) scattering matrix involving $R_-$, $e^{-i\tpsi \Ph/h}$, and $T_+$.  In a rough sense,
the resulting expression for the scattering matrix parallels the construction of the scattering map $\kappa$.

Our construction thus involves three semiclassical Fourier integral operators: $R_-\psp(h^2\DY)$, $e^{-i\tpsi \Ph/h}$, and $T_+\psp(h^2\DY)$.  Here 
$\psp\in C_c^\infty([0,1))$ is chosen to be $1$ on a sufficiently large set.    Each of these is a well-studied operator in its own right (though 
in $R_-$ and $T_+$, an ``$r$" occurs where we might more naturally expect to find ``$t$").  Part of the proof of the theorems is to carefully check the compositions which occur, not only in the 
expression for the scattering matrix, but also elsewhere in the construction of $\pop$.  This is done in Section \ref{s:mproperties}.  
The constructions of $\pop \Oph(\psi)$ and  $\tS \Oph(\psi)$ are carried out in Section 
\ref{s:gef}.

\subsection{Background and related work}   An introduction to the
 spectral and scattering theory of manifolds with infinite 
cylindrical ends can be found in \cite{Gol,GuiL,tapsit}, with further 
results for the scattering matrix in \cite{chr95, par}.   A relatively short self-contained introduction may also be found in \cite[Section 2]{CD2}.
The papers \cite{chrst,ChTa} use a detailed microlocal analysis of the scattering matrix applied to a specific function in an inverse problem.

In \cite{ZZ} Zelditch and Zworski consider a family of surfaces of revolution 
having a single connected asymptotically cylindrical end,  proving a result for the pair correlation measure of the phase shifts of the (unitary) scattering matrix.
This is stronger than our equidistribution result, Theorem \ref{thm:equidist}, but is for a particular class of surfaces of revolution.  
Additionally, Proposition
3 of \cite{ZZ} shows 
that for the surfaces under consideration the truncated scattering 
matrix (in their setting, $\tS \mathbbold{1}_{[\epsilon, 1-\epsilon]}(h^2\DY)$)
is a semiclassical quantum map associated to the scattering map $\kappa$.  The paper \cite{ZZ} uses the warped product structure of the surface 
and a separation of variables argument to reduce the problem to a study of a family of one-dimensional problems.

There are many papers which use microlocal analysis
to study the properties of the scattering matrix in 
Euclidean scattering.   Alexandrova \cite{Ale05,Ale06} shows
that under suitable assumptions the scattering amplitude for a 
compactly supported perturbation of the semiclassical Euclidean
Laplacian quantizes the scattering relation (see also \cite{Ale18, ABR}). 
A related result for the asymptotically conic setting is
\cite{HW}.  Ingremeau \cite{Ing} studies 
mapping properties of the scattering matrix
on Gaussian coherent states for (non-trapping) 
semiclassical Schr\"odinger operators on $\R^n$.     Our proofs of Theorems  \ref{thm:v1} and \ref{thm:v2} have been 
influenced by both \cite{Ale05} and \cite{Ing}, as well as by 
\cite[Section 3.11]{DyZw}. 
There are many other results which use microlocal techniques to find
asymptotics of the scattering matrix in Euclidean settings.  We mention
just a few, \cite{Vai,Gui,RT,Mic} and refer the reader to the cited papers for
further references. 

  The distribution of phase shifts has been studied in a number of Euclidean settings, e.g. \cite{BP,DGHH, GHZ,Ing2, GI,GH}.  Some of these papers
  use the results of 
Alexandrova or Ingremeau on the microlocal structure of the scattering matrix.   Our Theorem \ref{thm:equidist} is an application of Theorems \ref{thm:v1} or \ref{thm:v2} to prove an 
equidistribution result in the cylindrical end setting.   

\vspace{2mm}
\noindent
{\bf Acknowledgements.} The authors thank Kiril Datchev and Maciej Zworski for helpful
conversations and suggestions.  In addition, the authors thank K. Datchev for making the first versions of some of the figures used in this paper.   
The first author  gratefully acknowledges
the partial support of an M.U. Research Leave
and a Simons Foundation collaboration grant. Moreover, this material is based
in part upon work supported by the National Science Foundation under Grant No. 1440140, while the authors were in residence at the Mathematical Sciences 
Research Institute in Berkeley, California, during the fall 2019 semester.

\section{Examples for which one of
 the resolvent estimates holds}\label{s:examples}
In this section we give some examples of manifolds for which 
the estimates (\ref{eq:rbdhyp}) or (\ref{eq:rbdhyp2}) on the cut-off resolvent for $P=h^2\DX$ or $P=h^2\DX +V$ for certain potentials $V=V_0+h^2V_2$
holds.

\subsection{An example with a single connected end}\label{ss:singeex}
For $n\geq 2$ we can give $X=\R^n$ a warped product structure that 
makes it a manifold with an infinite cylindrical end.  Let $\rho$ be the 
radial coordinate on $\R^n$, and let $g_0= d\rho^2 +f(\rho)g_{\Sphere^{n-1}}$, where
$ g_{\Sphere^{n-1}}$ is the usual metric on the unit sphere $\Sphere^{n-1}$.  We 
assume $f\in C^\infty([0,\infty))$,  $f(\rho)=\rho^{2}$ in a 
neighborhood of $\rho=0$, the support of $f'$ is  $[0,\rho_0]$, with 
$f'(\rho)>0$ for $\rho\in (0,\rho_0)$.   The $d=2$ case
is illustrated by Figure \ref{f:cigar}.
  Then the only trapped 
geodesics are those which lie in a hypersurface $\{\rho=c\}$ for 
any $c\geq \rho_0$. 

\begin{figure}[h]
\includegraphics[width=0.6\textwidth]{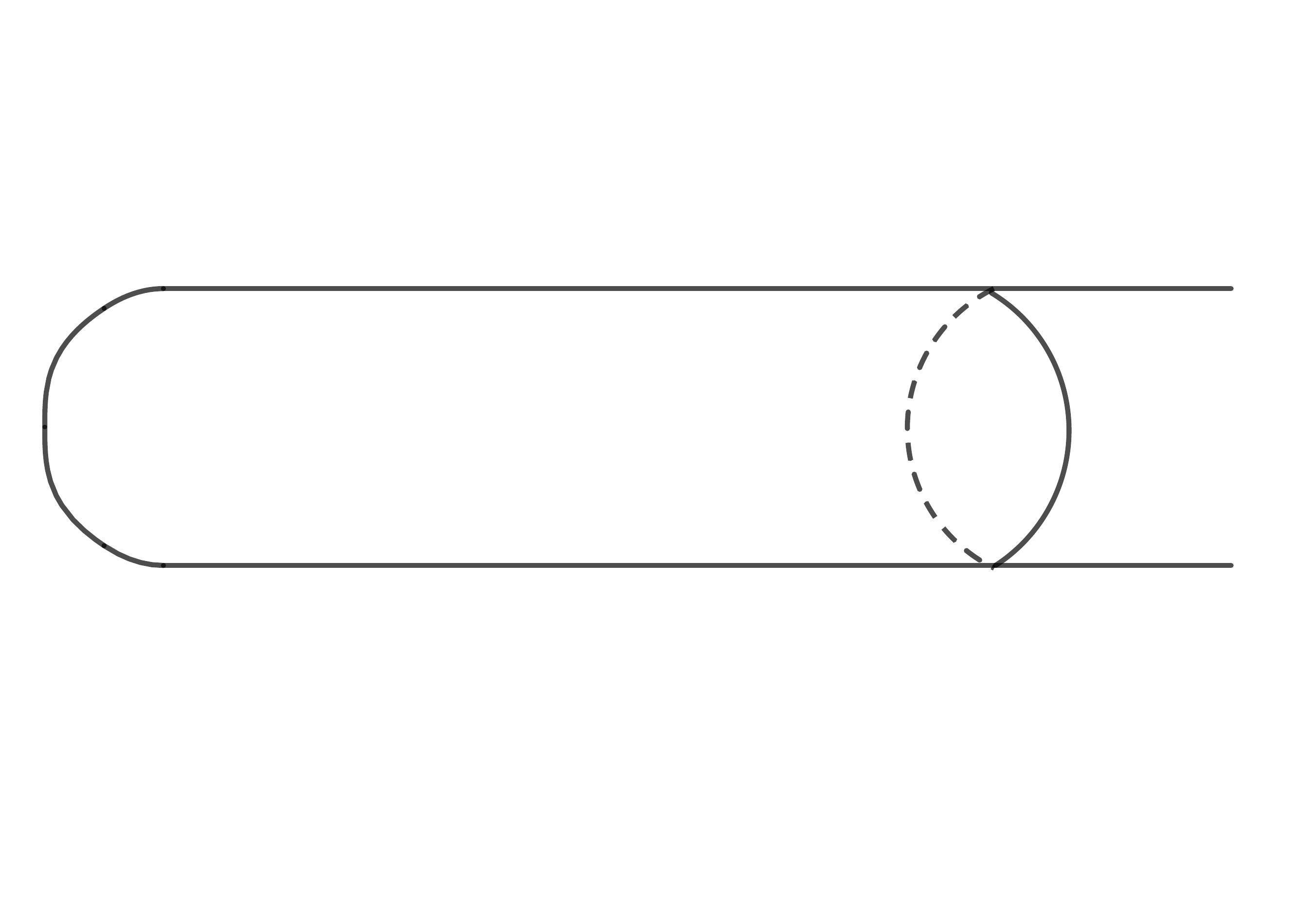} 
 \caption{A cigar-shaped, two-dimensional warped product.}\label{f:cigar}
\end{figure}

Let $g$ be any metric on $X$ so that  $g-g_0$ is supported in $\{(\rho,y) \mid  \rho<\rho_0\}$, and so that $g$ has the same trapped geodesics as $g_0$ does.
A discussion of constructing such metrics can be found in 
\cite[Example 1]{CDI}. We remark that there are  metrics satisfying these
conditions which are not 
rotationally symmetric.  

 For such 
manifolds $(X,g)$, $Y=\Sphere^{n-1} $ and all of $\calB$ 
is in the domain of the scattering map.

By \cite[Theorem 1.1]{CDI}, for any $\chi \in C_c^\infty(X)$,
$\|(\chi h^2\DX-1-i0)^{-1}\chi\|=O(h^{-2})$ when $h>0$ is sufficiently
small.  Hence  the estimate
 (\ref{eq:rbdhyp}) holds for $P=h^2\DX$ on
$(X,g)$, with $N_0=2 $.  
Moreover, by \cite[Theorem 3.1]{CDI}, (\ref{eq:rbdhyp}) holds
for $P=h^2\DX+V$ for a class of potentials $V\in C_c^\infty(X;\R)$.

We remark that in the case of a rotationally symmetric surface
these manifolds are very similar to, but not the 
same as, the surfaces considered 
in \cite{ZZ}.

\subsection{Examples modifying hyperbolic surfaces}\label{s:hs}
Starting with a convex cocompact
hyperbolic surface $(X,g_H)$, one can modify the metric
on the ends of the manifold $X$
 in such a way as to obtain a manifold with cylindrical ends so that
the cut-off resolvent is polynomially bounded.

\begin{figure}[h]
\hspace{2cm}
\labellist
\small
\pinlabel $r$ [l] at 1100 40
\pinlabel $\cosh^2\!r$ [l] at 880 440
\pinlabel $f(r)$ at 1150 300
\endlabellist

\includegraphics[width=5cm]
{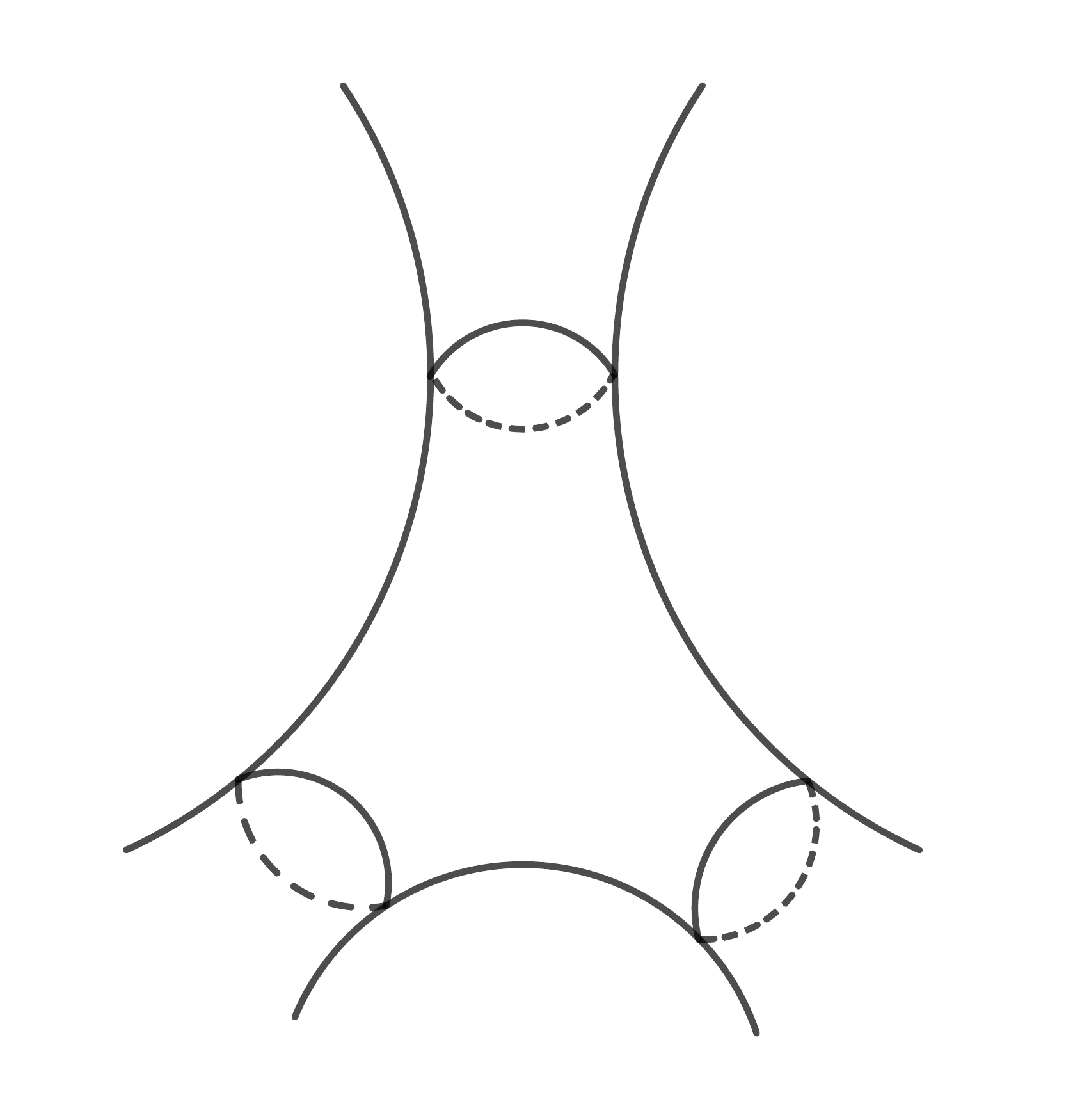}
\hspace{1cm}
\includegraphics[width=5.5cm]{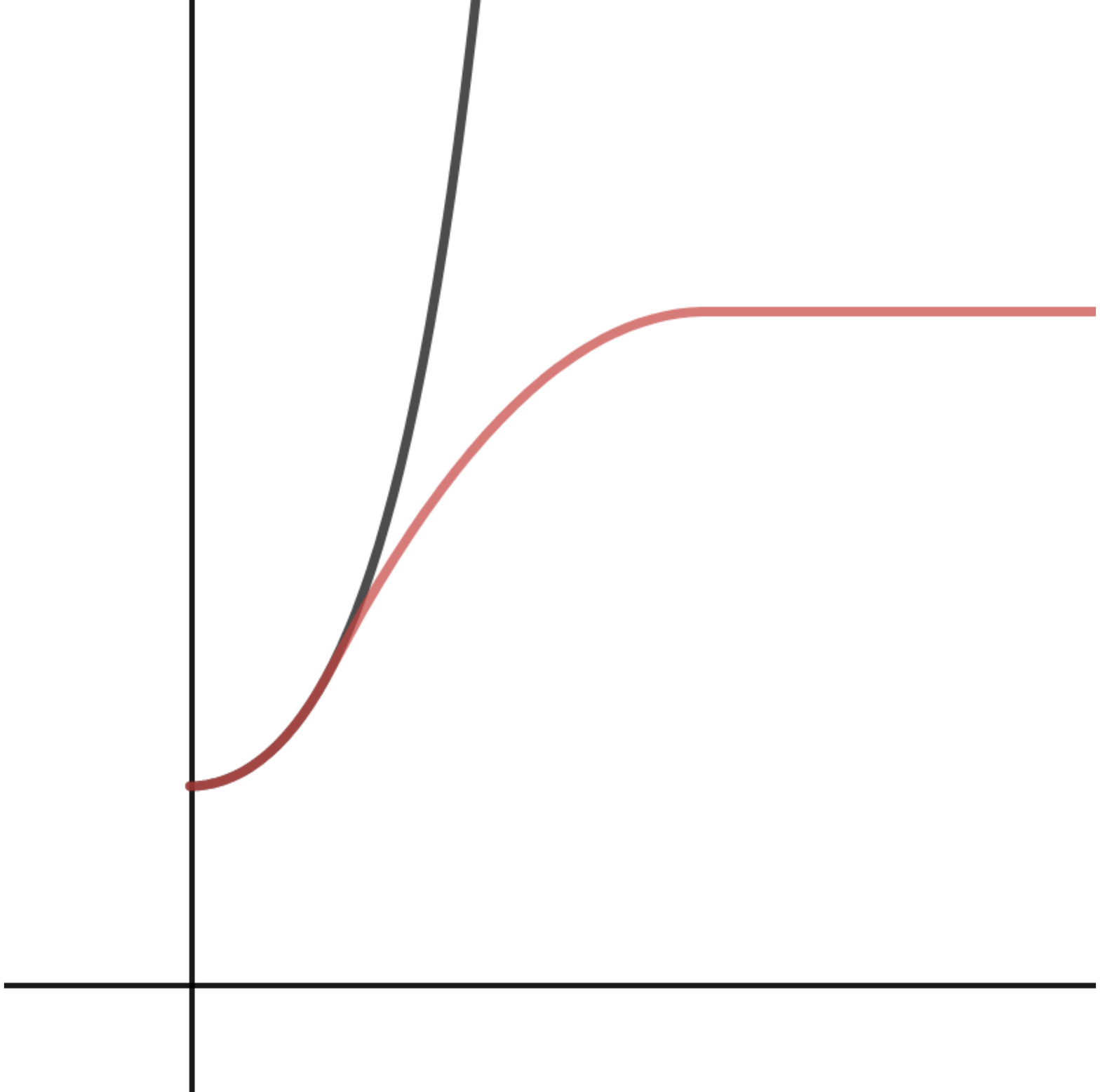}




 \caption{A hyperbolic surface $(X,g_H)$ with three funnels, and an example of a function $f$ satisfying the conditions on the warping function in Section \ref{s:hs}.}
\label{f:hyperbolic}
\end{figure}

There is a compact set $N\subset X$ so that 
$X\setminus N= (-4, \infty)_r\times Y_y$,
and $g_H \rest_{X\setminus N}=dr^2+\cosh^2(r+4)g_{Y}$.
Here we have modified slightly the usual convention to fit with our 
convention of using the coordinate $r\in (-4,\infty)$ on the ends of our manifold $X$.
The set $N$ is called the convex core of $X$, and the
 manifold $Y$ is the disjoint union of $k$ circles that might not 
have the same
length.    Let $f\in C^\infty(\R;(0,\infty))$ be equal to $\cosh^2r$
near $r=0$, with $f'$  compactly supported and $f'>0$ on the interior of the
convex hull of its support.
Let $g$ be the smooth metric on $X$ defined by
$g\rest_{N}=g_{H}\rest_N$, and $g\rest_{X\setminus N}=dr^2+f(r+4)g_Y$.  

Under these hypotheses, by \cite[Theorem 1.1]{CDI}  using results of \cite{bd,dyza}
the Laplacian $\DX$ on $(X,g)$ satisfies
$\|\chi (h^2\DX-1-i0)^{-1}\chi\|=O(h^{-2})$ for any $\chi \in C_c^\infty(X)$,
implying (\ref{eq:rbdhyp}) with $N_0=2$.  In fact the result is a bit stronger; see \cite[Section 3.3]{CDI} for further discussion
and references.




In higher dimensions
it is possible, but more complicated, to do a similar construction to
 modify the metrics on (certain) hyperbolic manifolds to give
manifolds with infinite cylindrical ends so that  
the resolvent of the semiclassical Laplacian satisfies (\ref{eq:rbdhyp});
 see \cite[Section 3.3]{CDI}.

\subsection{Right circular cylinder}\label{ss:rsc}

Set $X=\R_s\times \Sphere^{1}_y$, where $\Sphere^1$ is the unit circle,
and consider the product metric on $X$.
Let $W\in C_c^\infty(X;\R)$
 satisfy $W_0(s):=\int_0^{2\pi} W(s,y)dy \geq 0$, with 
$W_0\not \equiv 0$.  Then by \cite[Proposition 4.4 and Lemma 4.5]{Chr20} the operator $P=h^2\DX+h^2W$ 
satisfies (\ref{eq:rbdhyp}) with $N_0=2$.  Thus our results can 
be interpreted to give
results for the nonsemiclassical Schr\"odinger  operator at high energy.

In this case, the scattering map has as its domain all 
of $\calB$ and we can find the scattering map explicitly.  
Here $Y$ is the disjoint union of two circles, which
we write 
$Y=\Sphere_L^1\sqcup \Sphere_R^1$
for the cross sections of the connected ends of $X$ on
which
$s$ is bounded above  (for $\Sphere^1_L$; the ``left'' end) or
$s$ is bounded below (for $\Sphere^1_R$; the ``right'' end).  We use global coordinates $(s, y)\in \R \times[0,2\pi)$ on $\R \times \Sphere^1$, and use these same coordinates $y$ on $\Sphere^1_L$ and $\Sphere_R^1$.  Thus we can see in a particularly simple example how our choice
of function $r$ giving a coordinate on $\Xinf$ (or equivalently the decomposition $X=X_C\cup \Xinf$) affects
the scattering map.

Suppose $\supp (W)\subset [-a,a]\times\Sphere^1$, and set $X_C=[-a,a]\times 
\Sphere^1$.
Then the sets $\{\pm s=a+4\}$ correspond to the set $\{r=0\} \subset X$.
Recalling that $P=h^2\DX+h^2W$ here, a simple computation finds that
if $(y_-,\eta_-)\in T^*\Sphere^1_{R}\subset T^*Y$ with $|\eta_-|<1$
then $\kappa (y_-,\eta_-)=(y_+,\eta_-)$, where $y_+\in \Sphere^1_{L}$
and, modulo $2\pi$, $y_+=y_-+2(a+4)\eta_-/\sqrt{1-\eta_-^2}$.  A similar
computation works for points in $T^*\Sphere_L^1$.

\subsection{Warped products}\label{ss:wp}
Set $X=\R_s \times (Y_0)_y$ and $g= ds^2+(f(s))^{4/(n-1)}dg_{Y_0}$, where $(Y_0,g_{Y_0})$ 
is a smooth compact Riemannian manifold and $f\in C^\infty(\R; \R_+)$, with 
$f(s)=1$ if $|s|>a$.  We consider two special classes of functions $f$, which
give rise to 
manifolds with qualitatively different behavior both in terms of the trapped
geodesics and in terms of the number of embedded eigenvalues of $h^2\DX$.
For the first 
one (\ref{eq:rbdhyp}) (and hence also (\ref{eq:rbdhyp2})) holds for $P=h^2\DX$ (and $P=h^2\DX+V$ for some $V$), and 
for the second we show that (\ref{eq:rbdhyp2}) holds for $P=h^2\DX$. 

Here $Y$ is the disjoint union of two copies of
$Y_0$.  We write $Y=Y_{0L}\sqcup Y_{0R}$, where 
$Y_{0L}$ and $Y_{0R}$ are copies of $Y_0$ identified with the 
cross section of the ``left'' and ``right'' ends
of $X$, respectively. 

\subsubsection{Hourglass-type warped products}\label{ss:hourglass}
In addition to the assumptions made on $f$ above, assume that $f$ has a 
single critical point in $(-a,a)$, and it is a nondegenerate minimum.  The surface on the left in Figure \ref{f:bulge} provides an example. Then
by \cite[Theorem 3.1]{CDI}, see \cite[Section 3.4]{CDI}, for 
any $\chi \in C_c^\infty(X)$,
\begin{equation}\label{eq:rbdchda}
\|\chi (h^2\DX-1-i0)^{-1}\chi\|=O(h^{-2})\; \text{ for $h>0$ sufficiently
small.}
\end{equation}  Thus  the 
estimate (\ref{eq:rbdhyp}) holds with $N_0=2$
for $P=h^2\DX$.  
We note that 
the estimate (\ref{eq:rbdchda}) implies that $\DX$ has only finitely many eigenvalues.
Each trapped geodesic on this manifold lies in a set $\{s=c\}$, for some $c\in \bbR$ with 
 $f'(c)=0$. 
 
For  Schr\"odinger operators  $P=h^2\DX+V$, where $V\in C_c^\infty(X;\R)$
satisfies certain conditions the estimate 
(\ref{eq:rbdchda}) holds,  see \cite[Theorem 3.1]{CDI}.  For example, if $V_2\in C_c^\infty(X;\R)$, and $V(x)=V(x,h)=h^2V_2(X)$, then (\ref{eq:rbdchda}) holds.
For this example, because we use
the results of \cite{CDI} to prove the estimate (\ref{eq:rbdhyp}),
the potentials $V$ need not be functions of $s$ alone.

We now return to the case $P=h^2\DX$.  Let $f_m$ be the minimum value of $f$, and 
set $|\eta|_c=f_m^{4/(n	-1)}$.  Then
using properties of geodesics on warped products,
 the domain of the scattering map is $\{(y,\eta)\in T^*Y  \mid  |\eta|<1\; \text{and}\; |\eta|\not = |\eta|_c\}$.  Suppose $(y_-,\eta_-)\in T^*Y_{0L}$.
If $|\eta_-|<|\eta|_c$, then $\kappa(y_-,\eta_-)\in T^*Y_{0R}$, while
if $|\eta|_c<|\eta_-|<1$, $\kappa(y_-,\eta_-)\in T^*Y_{0L}$.

We introduce some notation to describe one consequence of this for the scattering
matrix.  Let $\prj_L:L^2(Y_{0L}\sqcup Y_{0R})\rightarrow L^2(Y_{0L})$
and $\prj_R:L^2(Y_{0L}\sqcup Y_{0R})\rightarrow L^2(Y_{0R})$ be the natural
orthogonal projections.  Then if $\psi_s\in C_c^\infty(\R)$ is 
supported in $(-\infty, |\eta|^2_c)$, it follows from the 
mapping properties of $\kappa$ and Theorem 
\ref{thm:v1}
that $\| \prj_L \tS \psi_s(h^2\DY)\prj_L \|=O(h^\infty).$
Likewise, if $\psi_l\in C_c^\infty(\R)$ is supported in 
$(|\eta|^2_c,1)$, then $\| \prj_R \tS \psi_l(h^2\DY)\prj_L \|=O(h^\infty)$.

Of course, there are similar results focusing on right multiplication
by  $\prj_R$ rather
than $\prj_L$.

\subsubsection{Warped products with bulges} \label{ss:bulge} Now consider what is in some 
sense the opposite situation to that of Section \ref{ss:hourglass}: in addition to the general assumptions on $f$ in
Section \ref{ss:wp}, assume that $f$ has a single critical point in $(-a,a)$, and 
it is a maximum.  In Figure \ref{f:bulge}, the figure on the left
illustrates the hourglass-type warped
products of Section \ref{ss:hourglass}, while that on the right illustrates
the warped products with bulges discussed in  this section.

\begin{figure}[h]
\hspace{3cm}

\includegraphics[width=5.5cm]{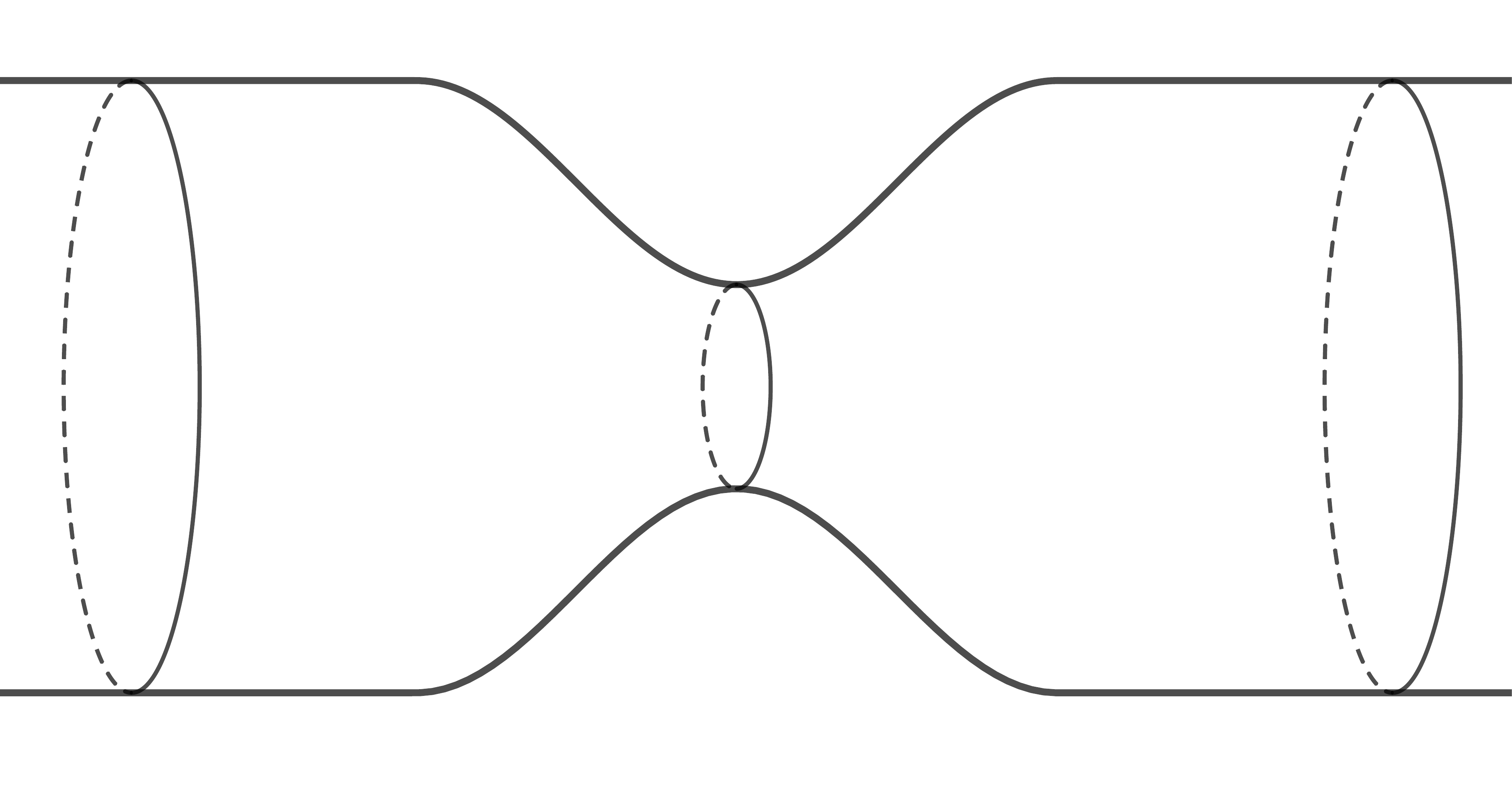} 
\hspace{1cm}
\includegraphics[width=5.5cm]{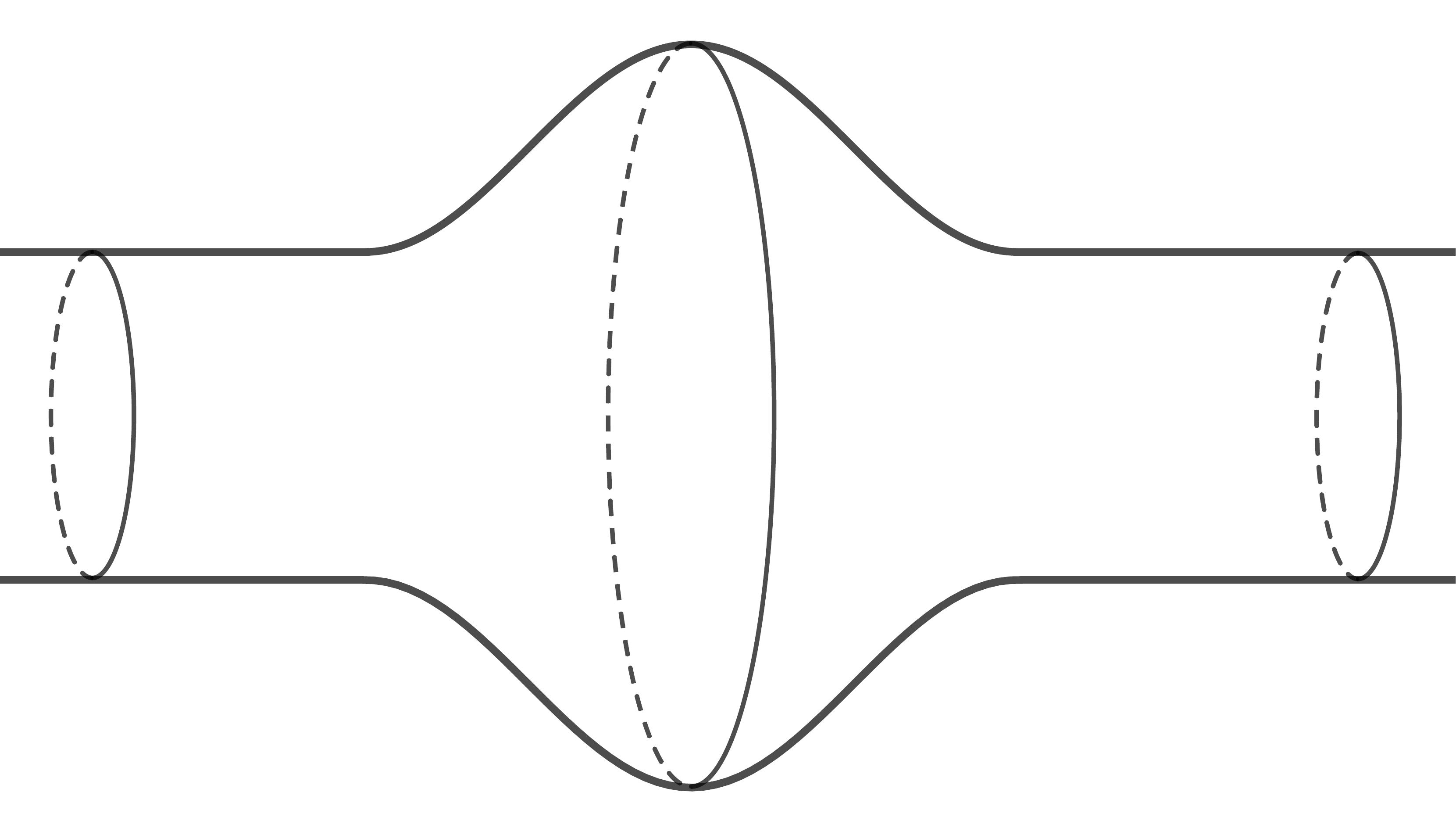}

 \caption{An hourglass-shaped  warped product (left), and 
a warped product with a bulge (right).}\label{f:bulge}
\end{figure}


With these assumptions on $f$, $X$ has infinitely many trapped geodesics that lie entirely in 
the region with $s\in (-a,a)$, and
 it is straightforward to show via a separation
of variables and results from semiclassical analysis that $\DX$ has infinitely many eigenvalues accumulating at infinity,
\cite{CZ, par}.  Hence if $\chi \in C_c^\infty(X)$
is nontrivial, then there is a sequence $\{h_j\}$ tending to $0$ so
that 
$ \lim_{\epsilon \downarrow 0}\|\chi (h_j^2\DX-1-i\epsilon)^{-1}\chi\|=\infty$.
Nonetheless, we show in  Lemma \ref{l:bulgeestimate} that
 (\ref{eq:rbdhyp2}) holds with $N_0=1$.  In comparison 
with the example of Section \ref{ss:hourglass}, the estimate is improved:
$N_0=1$ here, compared to $N_0=2$ in the hourglass-type example.  This 
may be surprising, since the trapping in the warped products with bulges
is stronger than that in the hourglass-type warped products.  This 
might be attributed to the fact that our microlocal cutoff in the cross-section,
$\mathbbold{1}_{[0,1-\epsilon]}(h^2\DY)$ has the effect of cutting off away from trapped
bicharacteristics in $\{p=1\}$ in the examples of this section, but
not the examples of Section \ref{ss:hourglass}.  Alternatively, the difference may be 
an artifact of the proof.

In this setting, the domain of the scattering map $\kappa$ is all of $\calB$.
Using the notation of Section \ref{ss:hourglass}, if $(y_-,\eta_-)\in \calB\cap T^*Y_{0L}$, then $\kappa(y_-,\eta_-)\in T^*Y_{0R}$.  Then 
by Theorem \ref{thm:v2}  for any
$\psi \in C_c^\infty([-1,1)),$ $\|\prj_{0L}\tS \psi(h^2\DY)\prj_{0L}\|=O(h^\infty)$.

For the special case of a surface of revolution with a bulge we compute the scattering map in Section \ref{ss:smwp}.

\section{Existence of the Poisson operator and the scattering matrix}\label{s:Posm}

In this section we discuss the Poisson operator, introduced in Section \ref{s:idea}, and prove some consequences for the scattering matrix, Lemmas \ref{l:tSwelldefined} and \ref{l:Sest}.  
The construction we give of the Poisson operator in this section is different from the more microlocal construction that we will give in Section \ref{s:gef}.
Much of the content of this section is known, see e.g. \cite{tapsit, chr95, par, lrb, CD2}, but we include it for the reader's convenience.

We begin by checking that there is an operator satisfying the conditions given in Definition \ref{d:pop} to define the Poisson operator, and that this uniquely 
determines the operator.  Recall that we assume $1\not \in \spec(h^2\DY)$.
Let $\mathcal{PR}_1$ denote orthogonal projection onto the eigenfunctions of $P$ with eigenvalue $1$, with $\mathcal{PR}_1=0$
if $1$ is not an eigenvalue of $P$.  Then it follows from \cite[Section 6.8]{tapsit} or \cite[Lemmas 2.2 and 2.3]{CD2}
that $\lim_{\epsilon \downarrow 0} \langle{r}\rangle ^{-(1/2+\delta)} (P-1\pm i \epsilon)^{-1}\left(I- \mathcal{PR}_1\right)\tilde{\chi}$
is a bounded operator on $L^2(X)$ for any $\tilde{\chi}\in L^\infty_c(X)$ and $\delta>0$.

Let $\varphi\in C^\infty(\bbR;[0,1])$ satisfy 
 $\varphi(r)=1$ for $r\geq 0$, and $\varphi(r)=0$ for $r\leq -1/2$.
Given $f\in L^2(Y)$,
 set 
\begin{equation}\label{eq:FXtilde}
\gef_{\Xtil}(r,y)=e^{-ir(I-h^2\DY)_+^{1/2}/h}\mathbbold{1}_{[0,1]}(h^2\DY)f \in \langle r \rangle ^{1/2+\delta } H^2(\R \times Y)
\end{equation}
and
\begin{equation}\label{eq:Finf}
\gef_{\Xinf}(r,y)=
\varphi (r) \gef_{\Xtil}\in \langle r \rangle ^{1/2+\delta}H^2(X).
\end{equation}
Then 
\[
(\Ph-1) \gef_{\Xinf}=(h^2\DX-1) \gef_{\Xinf} = \left(-h^2\varphi''(r) - 2h^2\varphi'(r)\partial_r 
\right)\gef_{\Xtil}
\]
has compact support on $\Xinf \subset X$.  Moreover, this function is orthogonal to any eigenfunction of $P$ with eigenvalue $1$.  This is because a separation of variables argument shows that if $g\in L^2(X)$ satisfies $(P-1)g=0$, 
  then $\mathbbold{1}_{[0,1]}(h^2\DY)(g\rest_{\Xinf})=0$.

 We now set
 \begin{equation}
 \label{eq:popconstruct}\pop f = \gef_{\Xinf} - (\Ph-1-i0)^{-1}(h^2\DX-1)\gef_{\Xinf}= \gef_{\Xinf} -\lim_{\epsilon\downarrow 0} (\Ph-1-i\epsilon)^{-1}(h^2\DX-1)\gef_{\Xinf}
 \end{equation}
 and check that it satisfies the requirements on $\pop f$ made in the definition of the Poisson operator.  Note that by construction, $(\Ph-1)\pop f=0$ and
 if $g\in L^2(X)$  satisfies $(P-1)g=0$, then $\langle \pop f, g\rangle =0$.  Since 
 \begin{equation*}\left( (\Ph-1-i0)^{-1}(h^2\DX-1)\gef_{\Xinf}\right)\rest_{\Xinf}\\
 =e^{ir \sro /h}f_+
 \end{equation*}
 for some function $f_+\in L^2(X)$, we have shown that $\pop f\in \langle r \rangle ^{1+\delta}H^2(X)$, and  thus have
 shown that there is an operator satisfying the conditions of Definition \ref{d:pop}.

 Next we consider uniqueness.   Suppose there are two such Poisson operators, $\pop$ and $\tilde{\pop}$.  For incoming data $f\in L^2(Y)$, denote the corresponding outgoing 
 data by $f_+$ and $\tilde{f}_+$, respectively.   
 Let $\{\phi_j\}$ be a complete
set of orthonormal eigenfunctions of $\DY$, with 
\begin{equation}\label{eq:phis} \DY\phi_j=\sigma_j^2\phi_j, \; \text{and} \; 0=\sigma_1^2\leq \sigma_2^2\leq \cdots.
\end{equation}
We have $(P-1)(\pop-\tilde{\pop})f=0$ and 
 \begin{equation}
 \left( (\pop-\tilde{\pop})f\right)\rest_{\Xinf}=e^{ir \sro /h}(f_+-\tilde{f}_+)= \sum_{j}c_j e^{ir(1-h^2\sigma_j^2)^{1/2}/h}\phi_j
 \end{equation}
 for some  $\{c_j\}$, $c_j \in \C$.  Here we use the convention that $ (1-h^2\sigma_j^2)^{1/2}$ has nonnegative real and imaginary parts, as in the definition of 
 $(I-h^2\DY)^{1/2}$.  Applying a Stokes' identity  gives
 $$0= \lim_{R\rightarrow \infty} \int_{X:r<R} \Big( (P-1)(\pop-\tilde{\pop})f \Big) \overline{(\pop-\tilde{\pop})f}\;dx= -\frac{2i}{h}\sum_{h^2\sigma_j^2<1} \sqrt{1-h^2\sigma_j^2}|c_j|^2$$
 implying that $c_j=0$ if $h^2\sigma_j^2<1$, so  that $(\pop-\tilde{\pop})f\in L^2(X)$.  Thus $(\pop-\tilde{\pop})f$ is an $L^2$ element of  the null space of $P-1$, and hence is either $0$ or an eigenfunction.
 But $\langle  (\pop-\tilde{\pop})f, g \rangle =0$ for any eigenfunction $g$ of $P$ with eigenvalue $1$, so $(\pop-\tilde{\pop})f\equiv 0$ and the Poisson operator is uniquely defined.
 
 We remark that our argument above shows that if we omit the requirement $\langle  \pop f,g\rangle =0$ for every $g\in L^2(X)$ in the null space of $P-1$, then $\pop f$ is determined up to addition of an eigenfunction of $P$ with eigenvalue $1$.
 We shall use this below.

\vspace{2mm}
\noindent
{\em Proof of Lemma \ref{l:tSwelldefined}.}  
For $f\in \mch_Y$, the function $F=\pop f$ satisfies the conditions of the first part of the Lemma.  Our argument above shows that if 
$F,\; \tilde{F}\in \langle r \rangle ^{1/2+\delta}H^2(X)$ with $(P-1)F=0=(P-1)\tilde{F}$, and both $F$ and $\tilde{F}$ have an expansion as 
in (\ref{eq:uexp})  with the same incoming data $f=f_-=\tilde{f}_-$, then $F-\tilde{F}$ is in the $L^2$ null space of $P-1$.   Hence $F-\tilde{F}$ is either $0$ or an 
$L^2$ eigenfunction.  Since for any $L^2$ eigenfunction $g$ with eigenvalue $1$, $\bbI(h^2\DY)(g\rest_{\Xinf})=0$, this ensures $\bbI(h^2\DY)(f_+-\tilde{f}_+)=0$, where
$f_+$, $\tilde{f}_+$ are the outgoing data as in (\ref{eq:uexp}) for $F$ and $\tilde{F}$, respectively.   Thus the scattering matrix is well-defined when $1\not \in \spec(h^2\DY)$.


We next relate the scattering matrix $\tS$ defined above to those of \cite[Section 1.3]{chr95} and \cite{par}; see also \cite[Section 6.10]{tapsit},
again under the assumption that $1\not \in \spec(h^2\DY)$.
 Suppose $h^2 \sigma_k^2<1$.  Then we can write the expansion as in  (\ref{eq:poponend}) for $\pop \phi_k$ as 
$$(\pop \phi_k)\rest_{\Xinf}(r,y)=e^{-ir(1-h^2\sigma_k^2)^{1/2} /h}\phi_k+ \sum_j e^{ir(1-h^2\sigma_j^2)^{1/2}/h}\Ss_{jk}\phi_j$$
 where $\Ss_{jk}= \Ss_{jk}(h)$ are some scalars.
This uniquely determines the $\Ss_{jk}$ if $h^2\sigma_j^2<1$.   Comparing our definition of $\tS$, we find
$\tS \phi_k=\sum_{h^2\sigma_j^2\leq 1} \Ss_{jk}\phi_j$.   Moreover, this shows that the scattering matrix of  \cite[Definition 1.3]{chr95} is 
 $S_U=(I-h^2\DY)^{1/4}_+ \tS (I-h^2\DY)^{-1/4}_+$; this operator is unitary on $\mch_Y$.

   Combining this with results of \cite[Section 1.3]{chr95} shows that if $h^2\sigma_j^2=1$ for some $j$, then
$\lim_{h'\uparrow h} \tS(h')$ exists as a bounded operator.
\qed

We shall later need a bound on $\|S\|$.  
We recall that the operator $(1-h^2\DY)^{1/4}_+ \tS (1-h^2\DY)^{-1/4}_+$ is unitary on $\mch_Y=\mathbbold{1}_{[0,1]}(h^2\DY)L^2(Y)$.  This does not immediately give a good bound on 
$\| \tS \|$ itself, since $\|(I-h^2\DY)_+^{-1/2}\|$ is large when $1/h^2$ is near an eigenvalue of $\DY$. 
 However, under the assumptions of Theorem \ref{thm:v2}, $h^2\DY$ commutes with $\tS$, and in this setting $\|\tS \|=1$.    In general we have the following lemma.  Recall
 $\mathcal{PR}_1$ is  orthogonal projection onto the eigenfunctions of $P$ with eigenvalue $1$.
\begin{lemma}\label{l:Sest}
There is a $C>0$ independent of $h$ so that 
$$\| \tS\| \leq C h\|\mathbbold{1}_{[0,1]}(h^2\DY)\mathbbold{1}_{[0,1]}(r)(\Ph-1-i0)^{-1}(I-\mathcal{PR}_1) \mathbbold{1}_{[-1,0]}(r)\|.$$
\end{lemma}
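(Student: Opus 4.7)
The plan is to build the bound directly from the explicit construction of the Poisson operator in equation (\ref{eq:popconstruct}). For $f\in\mchy$ one has
\begin{equation*}
\pop f = \gef_{\Xinf} - (\Ph-1-i0)^{-1}(h^2\DX-1)\gef_{\Xinf},\qquad \gef_{\Xinf}(r,y)=\varphi(r)e^{-ir\sro/h}\bbI(h^2\DY)f.
\end{equation*}
On the product end $\Xinf$ one has $\DX=-\partial_r^2+\DY$, and on the range of $\bbI(h^2\DY)$ the operator $\sro^2$ equals $I-h^2\DY$, so $(h^2\DX-1)$ annihilates $e^{-ir\sro/h}\bbI(h^2\DY)f$. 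A commutator computation then gives
\begin{equation*}
(h^2\DX-1)\gef_{\Xinf}=-h^2\varphi''(r)\gef_{\Xtil}+2ih\,\varphi'(r)\sro\,\gef_{\Xtil},
\end{equation*}
which is supported in $r\in[-\tfrac{1}{2},0]$ (by the support of $\varphi',\varphi''$), and whose $L^2(X)$-norm is bounded by $Ch\|f\|_{L^2(Y)}$ since $\|\sro\|_{\mchy\to\mchy}\le 1$. This is the source of the factor of $h$ in the claimed bound.

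Next I would observe that the source term $(h^2\DX-1)\gef_{\Xinf}$ is orthogonal to every $L^2$-eigenfunction $g$ of $P$ at eigenvalue $1$: a separation of variables on $\Xinf$ shows any such $g$ has $\bbI(h^2\DY)(g\rest_{\Xinf})=0$, whereas the source lies in the low-frequency subspace in the cross-sectional variable. Hence $(\Ph-1-i0)^{-1}(h^2\DX-1)\gef_{\Xinf}=(\Ph-1-i0)^{-1}(I-\mathcal{PR}_1)(h^2\DX-1)\gef_{\Xinf}$, and I may freely multiply from the right by $\mathbbold{1}_{[-1,0]}(r)$. Because $\varphi\equiv 1$ on $r\ge 0$, the defining expansion (\ref{eq:poponend}) identifies the outgoing term as
\begin{equation*}
e^{ir\sro/h}f_+ = -(\Ph-1-i0)^{-1}(I-\mathcal{PR}_1)(h^2\DX-1)\gef_{\Xinf}\quad\text{for } r\ge 0.
\end{equation*}

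To conclude I would apply $\bbI(h^2\DY)$, which commutes with $\sro$, to obtain $e^{ir(I-h^2\DY)^{1/2}_+/h}\tS f$ on the left. Since $e^{ir(I-h^2\DY)^{1/2}_+/h}$ is unitary on $\mchy$ for every real $r$, its $L^2(Y)$-norm applied to $\tS f$ equals $\|\tS f\|_{L^2(Y)}$, independently of $r$. Integrating the square of the displayed identity over $r\in[0,1]$ (just multiplying the left by $1$) and then restricting the source to its support gives
\begin{equation*}
\|\tS f\|_{L^2(Y)}^2=\|\bbI(h^2\DY)\mathbbold{1}_{[0,1]}(r)(\Ph-1-i0)^{-1}(I-\mathcal{PR}_1)\mathbbold{1}_{[-1,0]}(r)(h^2\DX-1)\gef_{\Xinf}\|_{L^2(X)}^2,
\end{equation*}
and combining with the $L^2$-bound from the first paragraph yields $\|\tS f\|\le Ch\|A\|\|f\|$ with $A$ the cut-off resolvent of the statement.

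The only delicate point is the $r$-averaging in the last step: it converts the pointwise-in-$r$ extraction of the outgoing data into a genuine operator norm against which the cut-off resolvent bound can be applied, using crucially that $e^{ir\sqrt{I-h^2\DY}_+/h}$ is a unitary on $\mchy$ for each real $r$. This is what keeps the estimate free of the factors $(I-h^2\DY)_+^{\pm 1/4}$ that separate $\tS$ from the unitary $\SU$.
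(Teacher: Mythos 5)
Your argument is correct and is essentially the paper's own proof: both extract the outgoing data from the resolvent term in (\ref{eq:popconstruct}), insert $(I-\mathcal{PR}_1)$ using the orthogonality of the low-frequency source term to the eigenfunctions at energy $1$, and use the $r$-independence of the cross-sectional $L^2(Y)$-norm of the propagating modes to trade the fixed-$r$ identification of $\bbI(h^2\DY)f_+$ for an $L^2$-norm over the slab $\{0\le r\le 1\}$, with the factor of $h$ coming from $[h^2\partial_r^2,\varphi]$ applied to $F_{\tX}$. The only point you omit is the case $1\in\spec(h^2\DY)$, which the paper handles in one line by taking the limit $h'\uparrow h$, consistent with the definition of $S(h)$ at such values.
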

\begin{proof} First suppose $1\not \in \spec(h^2\DY)$.
We use the Poisson operator $\pop$ as constructed in (\ref{eq:popconstruct}).  Let $f\in \mch_Y$, and denote the outgoing data in $\pop f$ by 
 $f_+$.   We note from our construction of $\pop$ that 
 \begin{multline}
  \left( e^{ir\sro/h} \mathbbold{1}_{[0,1]}(h^2\DY) f_+ \right)\rest_{ \Xinf:r>1} =  \mathbbold{1}_{[0,1]}(h^2\DY)\left( (\Ph-1-i0)^{-1} [h^2\partial^2_r,\varphi]F_{\tX}\right)\rest_{\Xinf:r>1}
  \\= \mathbbold{1}_{[0,1]}(h^2\DY)\left( (\Ph-1-i0)^{-1} (I-\mathcal{PR}_1)[h^2\partial^2_r,\varphi]F_{\tX}\right)\rest_{\Xinf:r>1}.
  \end{multline}
 where $F_{\tX}$ is defined in (\ref{eq:FXtilde}) and $\varphi$ is as in (\ref{eq:Finf}).  Thus
\begin{align*}& 
\|\mathbbold{1}_{[0,1]}(h^2\DY)f_+\|_{L^2(Y)}\\ & \leq  \|\mathbbold{1}_{[0,1]}(h^2\DY)\mathbbold{1}_{[0,1]}(r)(\Ph-1-i0)^{-1}(I-\mathcal{PR}_1) \mathbbold{1}_{[-1,0]}(r)\|_{L^2(X)\rightarrow L^2(X)}
\| [h^2\partial_r^2,\varphi]F_{\tX}\|_{L^2(X)}  \\ & 
\leq C h\|\mathbbold{1}_{[0,1]}(h^2\DY)\mathbbold{1}_{[0,1]}(r)(\Ph-1-i0)^{-1}(I-\mathcal{PR}_1) \mathbbold{1}_{[-1,0]}(r)\|_{L^2(X)\rightarrow L^2(X)}\| f\|_{L^2(Y)}
\end{align*}
for some constant $C$.

To handle the case of $1\in \spec(h^2\DY)$, take the limit as $h'\uparrow h$ to obtain the desired bound.
\end{proof}

\section{The resolvent on $\Xtil=\R\times Y$}
\label{s:Xtilops}

We shall need some facts about the behavior of the resolvent of the Laplacian
on the  manifold $\Xtil=\bbR \times Y$ with the product
metric, and the operators that 
arise when studying it.    We begin with a simple lemma about the resolvent
of $-h^2\partial_r^2$ on $\bbR$.
\begin{lemma}\label{l:1d}
Let $\tau>0$ and $f\in L^2_c(\bbR)$ be supported in the interval $[a,b]$.
Then, if $r>b$
$$\left((-h^2\partial_r^2-\tau^2 \mp i0)^{-1}f\right)(r)
= \pm \frac{i}{2\tau h}e^{\pm i \tau r/h}\int_{a}^b e^{\mp i \tau r'/h}f(r')dr'$$
and 
 $$\left((-h^2\partial_r^2+\tau^2)^{-1}f\right)(r)
= \frac{1}{2\tau h}e^{- \tau r/h}\int_{a}^b e^{ \tau r'/h}f(r')dr'.$$
If $r<a$, then
$$\left((-h^2\partial_r^2-\tau^2 \mp i0)^{-1}f\right)(r)
= \pm \frac{i}{2\tau h}e^{\mp i \tau r/h}\int_{a}^b e^{\pm i \tau r'/h}f(r')dr'$$
and 
$$\left((-h^2\partial_r^2+\tau^2)^{-1}f\right)(r)
= \frac{1}{2\tau h}e^{\tau r/h}\int_{a}^b e^{ -\tau r'/h}f(r')dr'.$$
\end{lemma}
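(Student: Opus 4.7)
The plan is to compute the Schwartz kernel (Green's function) of the one-dimensional resolvent explicitly, and then evaluate it at points $r$ lying outside the support $[a,b]$ of $f$.

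First I would solve the homogeneous equations. The solutions of $(-h^2\partial_r^2-\tau^2)u=0$ are linear combinations of $e^{\pm i\tau r/h}$, while the solutions of $(-h^2\partial_r^2+\tau^2)u=0$ are linear combinations of $e^{\pm\tau r/h}$. For each of the three resolvents I would then construct the corresponding Green's function $G(r,r')$ by choosing the appropriate fundamental solution on each side of the diagonal $r=r'$, requiring continuity at $r=r'$ together with the jump condition $-h^2\bigl[\partial_r G\bigr]_{r'^-}^{r'^+}=1$ to account for the delta source.

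For the elliptic case $-h^2\partial_r^2+\tau^2$, requiring $G(\cdot,r')\in L^2(\mathbb{R})$ singles out $e^{-\tau r/h}$ to the right of $r'$ and $e^{+\tau r/h}$ to the left, producing the symmetric kernel $G(r,r')=\frac{1}{2\tau h}e^{-\tau|r-r'|/h}$. For the limits $-h^2\partial_r^2-\tau^2\mp i0$ the correct boundary condition is fixed by the limiting absorption principle: writing the resolvent at $\tau^2\pm i\epsilon$ involves $\sqrt{\tau^2\pm i\epsilon}$ with positive imaginary part, which selects the exponential decaying as $|r|\to\infty$ when $\epsilon>0$; taking $\epsilon\downarrow 0$ and solving the jump condition then yields $G_{\mp}(r,r')=\pm\frac{i}{2\tau h}e^{\pm i\tau|r-r'|/h}$.

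Finally, when $r>b$, every $r'\in[a,b]$ satisfies $|r-r'|=r-r'$, so in each of the three cases the exponential factor in $G(r,r')$ splits as a product of an $r$-dependent and an $r'$-dependent factor, and the convolution $\int G(r,r')f(r')\,dr'$ reduces to the product of the $r$-exponential with the integral formulas stated in the lemma; the case $r<a$ is identical after observing $|r-r'|=r'-r$ there. I do not expect a real obstacle here; the only point requiring attention is matching the sign convention in $\mp i0$ with the choice of decaying square root under the limiting absorption principle, which then determines both which exponential lives on which side of the diagonal and the overall sign of the prefactor.
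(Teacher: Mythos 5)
Your proposal is correct and follows essentially the same route as the paper: both arguments reduce the lemma to the explicit kernel $\frac{i}{2\lambda h}e^{i\lambda|r-r'|/h}$ of the one-dimensional resolvent (the paper quotes it for $\Im\lambda>0$ and passes to the limit, you rederive it from the jump condition and the limiting absorption principle) and then use the support of $f$ to split $|r-r'|$. Your sign bookkeeping for the $\mp i0$ limits agrees with the stated formulas, so there is nothing to fix.
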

\begin{proof}
We use that, for $\lambda \in \bbC$ with $\Im \lambda >0$,
$$\left((-h^2\partial_r^2-\lambda^2 )^{-1}f\right)(r)
= \frac{i}{2\lambda h}\int_{-\infty}^\infty e^{i\lambda |r-r'|/h}f(r')dr'.$$
Then the lemma follows directly using the support properties of $f$.
\end{proof}




Define  operators
$T_{\pm}:L_c^2(\Xtil)\rightarrow L^2(Y)$ by
$$ T _{\pm} f = \int_{\R}
e^{\mp ir' (\sro)/h}f(r',\bullet)dr'.$$

\begin{lemma}\label{l:prodresolve}
Let $f\in L^2_c(\Xtil)$ be supported in $[a,b]\times Y$, and let
$\psp\in C_c^\infty([0,1))$.  
Then
\begin{align}\label{eq:prodresolve2}
\left(\psp(h^2\DY)\sro(h^2\DXt -(1\pm i0))^{-1}f\right)\rest_{ r>b}
& = \frac{\pm i}{2h}\psp(h^2\DY)e^{\pm i r \sro/h}T_\pm f
\end{align}
Moreover,
\begin{equation}\label{eq:prodresolve3}
\left(\psp(h^2\DY)\sro (h^2\DXt -1-i0)^{-1}f\right)\rest_{ r<a}
= \frac{i}{2h}\psp(h^2\DY)e^{-i r \sro /h}T_- f.
\end{equation}
\end{lemma}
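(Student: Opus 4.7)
The plan is to reduce to Lemma \ref{l:1d} by separating variables on $\tX=\bbR\times Y$ using the eigenbasis $\{\phi_j\}$ of $\DY$ from \eqref{eq:phis}. Since $Y$ is compact, the eigenvalues $\sigma_j^2$ are discrete and tend to $\infty$. Writing $f(r,y)=\sum_j f_j(r)\phi_j(y)$ with $f_j(r)=\langle f(r,\cdot),\phi_j\rangle_Y$, each $f_j$ is compactly supported in $[a,b]$, and
\[
h^2\DXt - 1 = -h^2\partial_r^2 + h^2\DY - 1
\]
acts diagonally, so in the $j$th mode the resolvent $(h^2\DXt-1-i0)^{-1}$ acts as $(-h^2\partial_r^2 - \tau_j^2 - i0)^{-1}$, where $\tau_j:=(1-h^2\sigma_j^2)^{1/2}$.

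The cut-off $\psp(h^2\DY)$ is crucial: since $\supp\psp\subset[0,1)$, only finitely many modes (those with $h^2\sigma_j^2<1$) contribute, and for such modes $\tau_j>0$ is a genuine real positive number. Thus the first set of formulas of Lemma \ref{l:1d} applies directly: for $r>b$,
\[
\bigl((-h^2\partial_r^2-\tau_j^2-i0)^{-1}f_j\bigr)(r) = \frac{i}{2\tau_j h}\,e^{i\tau_j r/h}\int_a^b e^{-i\tau_j r'/h} f_j(r')\,dr',
\]
with the analogous formula (with opposite signs) for the $+i0$ limit, and the matching formulas from Lemma \ref{l:1d} for $r<a$.

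Next I would multiply by $\sro$, which in the $\phi_j$ mode is simply multiplication by $\tau_j$; this cancels the $\tau_j$ in the denominator, producing the prefactor $\pm i/(2h)$ independent of $j$. I would then recognize the integral $\int_a^b e^{-i\tau_j r'/h}f_j(r')\,dr'$ as precisely the $\phi_j$-component of $T_+f$ (and $\int e^{i\tau_j r'/h}f_j(r')\,dr'$ as the $\phi_j$-component of $T_-f$), since
\[
T_\pm f = \int_\bbR e^{\mp i r'\sro/h}f(r',\cdot)\,dr' \quad\text{acts as}\quad f_j(r)\mapsto \int_\bbR e^{\mp i r'\tau_j/h}f_j(r')\,dr'
\]
in the $j$th mode. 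Reassembling the sum against $\phi_j$ and inserting the cutoff $\psp(h^2\DY)$ (which makes the finite sum unambiguous and kills all modes with $h^2\sigma_j^2\geq 1$) gives \eqref{eq:prodresolve2} and \eqref{eq:prodresolve3}.

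There isn't really an obstacle here; the one point requiring a sentence of care is ensuring the spectral manipulations are legitimate. The cutoff $\psp(h^2\DY)$ converts the (formally infinite) spectral sum into a finite one, so there are no convergence issues and one may freely interchange the $j$-sum with the $r$-integrations, and with the applications of $(1-h^2\DY)^{1/2}$ and the exponential factors $e^{\pm i r\sro/h}$. Once this is observed, the lemma reduces mode by mode to the one-dimensional identities of Lemma \ref{l:1d}.
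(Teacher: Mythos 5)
Your proof is correct and follows essentially the same route as the paper's: separation of variables in the eigenbasis of $\DY$, reduction to the one-dimensional resolvent formulas of Lemma \ref{l:1d} with $\tau_j=(1-h^2\sigma_j^2)^{1/2}$, and reassembly via the spectral theorem. Your explicit remark that the cutoff $\psp(h^2\DY)$ restricts to the finitely many modes with $\tau_j>0$ real is a slightly tidier way of justifying the applicability of Lemma \ref{l:1d}, but the argument is the same.
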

\begin{proof}
The proof uses separation of variables and the spectral theorem.

Let $\{\sigma_j^2\}$, $\{\phi_j\}$ be the eigenvalues and eigenfunctions
of $\DY$ as in (\ref{eq:phis}).  Define
$\tau_j=\tau_j(h)=(1-h^2\sigma_j^2)^{1/2}$, where our convention is
 the square root has nonnegative
real and imaginary parts.  Then writing 
$f(r,y)=\sum_{j=1}^\infty f_j(r)\phi_j(y)$, we have
$$\left((I-h^2\DY)^{1/2}(h^2\DXt -1-i0)^{-1})f\right)(r,y)=
\sum_{j=1}^\infty \tau_j\left((-h^2\partial_r^2-\tau_j^2 -i0)^{-1}f_j\right)(r)\phi_j(y).$$
Now we assume that $r>b$. Then from Lemma \ref{l:1d},
$$\left((I-h^2\DY)^{1/2}(h^2\DXt -1-i0)^{-1}f\right)(r,y)=
\sum_{j=1}^\infty \frac{i}{2 h}e^{ i \tau_j r/h} 
\left( \int_{a}^b e^{- i \tau_j r'/h}f_j(r')dr'\right) \phi_j(y)$$
so that
\begin{multline}
\left(\psp(h^2\DY)(I-h^2\DY)^{1/2}(h^2\DXt -1-i0)^{-1}f\right)(r,y)\\=
\sum_{j=1}^\infty \frac{i}{2 h}\psp(h^2 \sigma_j^2)e^{ i \tau_j r/h} 
\left( \int_{a}^b e^{- i \tau_j r'/h}f_j(r')dr'\right) \phi_j(y).
\end{multline}
But then, for the top choice of sign,
 this is the representation of the operator on the right hand
of  (\ref{eq:prodresolve2}) given by the spectral theorem.

The proofs of the remaining equalities are similar.
\end{proof}


\section{Resolvent estimates on $X$} \label{s:reX}
This section contains two lemmas that we use later to allow some flexibility in exactly how we cut-off the resolvent on the end $\Xinf$.  We recall that $r>-4$ only on the end 
$\Xinf$.  These estimates do not require the bounds (\ref{eq:rbdhyp}) or  (\ref{eq:rbdhyp2}).

\begin{lemma}\label{l:reX1}
Let $c>-4$, $M>c$ and $z\in \C\setminus \operatorname{spec}(P)$.  Then 
$$\| \mathbbold{1}_{[M, M+1]}(r)(\Ph-z)^{-1}\mathbbold{1}_{(-\infty,c]}(r)\|\leq \| \mathbbold{1}_{[c,c+1]}(r)(\Ph-z)^{-1}\mathbbold{1}_{(-\infty,c]}(r)\|$$
and for any $\epsilon>0$
\begin{multline*}
\|\mathbbold{1}_{[0,1-\epsilon]}(h^2\DY) \mathbbold{1}_{[M, M+1]}(r)(\Ph-z)^{-1}\mathbbold{1}_{(-\infty,c]}(r)\|
\\ \leq \| \mathbbold{1}_{[0,1-\epsilon]}(h^2\DY) \mathbbold{1}_{[c,c+1]}(r)(\Ph-z)^{-1}\mathbbold{1}_{(-\infty,c]}(r)\|.
\end{multline*}
\end{lemma}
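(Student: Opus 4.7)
The key structural fact to exploit is that $V_0, V_2 \in C_c^\infty(X_C)$, so the potential vanishes on $X_\infty$, and on the end $X_\infty = (-4,\infty)_r \times Y$ with product metric we simply have $P = h^2\Delta_X = -h^2\partial_r^2 + h^2\Delta_Y$. Given $f$ supported in $\{r \leq c\}$ and setting $u = (P-z)^{-1}f \in L^2(X)$, on the region $\{r>c\} \subset X_\infty$ the function $u$ satisfies the homogeneous separable equation $(-h^2\partial_r^2 + h^2\Delta_Y - z)u = 0$. Expanding $u(r,y) = \sum_j u_j(r)\phi_j(y)$ in the orthonormal basis $\{\phi_j\}$ of $\Delta_Y$-eigenfunctions from \eqref{eq:phis}, each mode $u_j$ obeys the scalar ODE $(-h^2\partial_r^2 + h^2\sigma_j^2 - z)u_j = 0$ on $(c,\infty)$.

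Since $u \in L^2(X)$, each $u_j$ must be square-integrable on $(c,\infty)$; hence the only admissible solution is $u_j(r) = c_j e^{i\lambda_j r/h}$, where $\lambda_j = \sqrt{z - h^2\sigma_j^2}$ is chosen with $\Im\lambda_j > 0$. The hypothesis $z\in\mathbb{C}\setminus\spec(P)$ guarantees that this branch choice is always possible: if $z$ has nonzero imaginary part, $\lambda_j^2$ avoids the nonnegative real axis automatically, and if $z$ is real then $z\notin\spec(P)$ forces $z - h^2\sigma_j^2 < 0$ for every $j$ (otherwise a mode would contribute to the continuous spectrum), giving $\lambda_j$ purely imaginary with positive imaginary part. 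Consequently the mode density $|u_j(r)|^2 = |c_j|^2 e^{-2\Im\lambda_j \, r/h}$ is a monotonically decreasing function of $r$ on $(c,\infty)$.

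From this monotonicity, for any $M \geq c$,
\[
\|\mathbbold{1}_{[M,M+1]}(r)\, u\|^2_{L^2(X)} = \sum_j |c_j|^2 \int_M^{M+1} e^{-2\Im\lambda_j r/h}\,dr \;\leq\; \sum_j |c_j|^2 \int_c^{c+1} e^{-2\Im\lambda_j r/h}\,dr = \|\mathbbold{1}_{[c,c+1]}(r)\, u\|^2_{L^2(X)}.
\]
Taking the supremum over $f$ with $\|\mathbbold{1}_{(-\infty,c]}(r) f\| \leq 1$ yields the first inequality. For the second inequality, the spectral projection $\mathbbold{1}_{[0,1-\epsilon]}(h^2\Delta_Y)$ acts in the $y$-variable and simply restricts the sum to the subset of indices with $h^2\sigma_j^2 \leq 1-\epsilon$; the identical mode-by-mode monotonicity argument then delivers the desired bound.

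The only subtlety I anticipate is the case analysis in the second paragraph, verifying that $\Im\lambda_j > 0$ for every $j$ whenever $z\notin\spec(P)$. This hinges on recognizing that real values of $z$ in the half-line $[0,\infty)$ which lie above any threshold $h^2\sigma_j^2$ necessarily belong to the continuous spectrum of $P$, so excluding $\spec(P)$ automatically rules them out; aside from this, everything reduces to the one-dimensional ODE analysis already used in Lemma \ref{l:1d}.
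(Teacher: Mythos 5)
Your proposal is correct and follows essentially the same route as the paper: separate variables on the product end, note that each mode of $(P-z)^{-1}\mathbbold{1}_{(-\infty,c]}(r)f$ on $\{r>c\}$ is a multiple of the exponentially decaying solution $e^{i\tau_j(z)r/h}$ with $\Im\tau_j(z)>0$ (possible precisely because $z\notin\spec(P)\supset[0,\infty)$), and conclude by monotonicity of $|e^{i\tau_j(z)r/h}|$ in $r$. Your extra care in justifying why only the decaying branch survives (via $u\in L^2$) and why the branch of the square root exists fills in details the paper leaves implicit, but the argument is the same.
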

\begin{proof}
Recall that on the end $\Xinf$, $P=-h^2\partial_r^2+h^2\DY$.  We use here the notation $\tau_j(z)=\tau_j(z,h)=(z-h^2\sigma_j^2)^{1/2}$, where the square root has positive imaginary part, 
which is possible since $z\not \in [0,\infty)$.  Then for any $f\in L^2(X)$ there are $c_j\in \C$ so that 
$$(P-z)^{-1} ( \mathbbold{1}_{(-\infty,c]}(r) f)\upharpoonleft_{r>c} (r,y)=\sum _{j=1}^\infty c_j e^{i \tau_j(z)r/h} \phi_j(y).$$
Since for each $j$, $|e^{i \tau_j(z)r/h}|$ is exponentially decreasing as $r\rightarrow \infty$, 
$$\| \mathbbold{1}_{[M, M+1]}(r) \sum _{j=1}^\infty c_j e^{i \tau_j(z)r/h} \phi_j(y) \|^2 \leq \| \mathbbold{1}_{[c, c+1]}(r) \sum _{j=1}^\infty c_j e^{i \tau_j(z)r/h} \phi_j(y) \|^2$$
proving the first statement of the lemma.

The proof of the second statement is very similar.
\end{proof}

\begin{lemma} \label{l:reX2}
For $M>0$ there is a $C=C(M)>0$ so that for all $z\in \C\setminus \operatorname{spec}(P)$, $h\in (0,1]$
$$\|\mathbbold{1}_{[0, 1]}(r)(\Ph-z)^{-1}\mathbbold{1}_{(-\infty,M]}(r)\| \leq C\left(h^{-2} + \|\mathbbold{1}_{[0, 1]}(r)(\Ph-z)^{-1}\mathbbold{1}_{(-\infty,0]}(r)\|\right).$$
Moreover, for every $M,\; \epsilon>0$ there is a $C=C(M,\epsilon)$ so that 
\begin{multline*}
\|\mathbbold{1}_{[0,1-\epsilon]}(h^2\DY) \mathbbold{1}_{[0, 1]}(r)(\Ph-z)^{-1}\mathbbold{1}_{(-\infty,M]}(r)\| 
\\ \leq C\left(h^{-2} + \|\mathbbold{1}_{[0,1-\epsilon]}(h^2\DY)\mathbbold{1}_{[0, 1]}(r)(\Ph-z)^{-1}\mathbbold{1}_{(-\infty,0]}(r)\|\right).
\end{multline*}
\end{lemma}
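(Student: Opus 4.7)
The plan is to prove both statements by the same strategy, with the second statement (involving the $Y$-frequency cutoff) being markedly simpler. First, split the input cutoff as $\mathbbold{1}_{(-\infty,M]}(r)=\mathbbold{1}_{(-\infty,0]}(r)+\mathbbold{1}_{(0,M]}(r)$; the contribution from $\mathbbold{1}_{(-\infty,0]}(r)$ is already the second summand on the right-hand side of the desired estimate, so it suffices to bound $\mathbbold{1}_{[0,1]}(r)(\Ph-z)^{-1}\mathbbold{1}_{(0,M]}(r)$ (and its $Y$-cutoff analogue) by $Ch^{-2}$ plus a multiple of the admitted resolvent term.

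To do so, I would compare with the free resolvent $R_0(z)=(h^2\DXt-z)^{-1}$ on the product cylinder $\Xtil=\bbR\times Y$. Given $g$ supported in $\{0<r\le M\}\subset\Xinf$, set $u=(\Ph-z)^{-1}g$ and pick $\eta\in C^\infty(\bbR)$ with $\eta\equiv 1$ on $[0,\infty)$ and $\supp\eta\subset[-1/2,\infty)$. Since $\Ph=h^2\DXt$ on the cylindrical end, a direct calculation on $\Xtil$ (extending $\eta u$ by zero) gives $(h^2\DXt-z)(\eta u)=g+[\Ph,\eta]u$, with $[\Ph,\eta]u$ supported in $[-1/2,0]$. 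Because $z\notin[0,\infty)\supset\spec(\Ph)$, the operator $R_0(z)$ is bounded; applying it and restricting to $\{r\in[0,1]\}$ (where $\eta\equiv 1$) yields
\begin{equation*}
\mathbbold{1}_{[0,1]}(r)u=\mathbbold{1}_{[0,1]}(r)R_0(z)g+\mathbbold{1}_{[0,1]}(r)R_0(z)[\Ph,\eta]u.
\end{equation*}
Using the separation-of-variables formula of Lemma~\ref{l:prodresolve} together with the explicit one-dimensional kernel from Lemma~\ref{l:1d}, the cut-off free resolvent $\mathbbold{1}_{[0,1]}(r)R_0(z)\mathbbold{1}_{(0,M]}(r)$ decomposes as a direct sum over the eigenbasis of $\DY$, with $j$-th summand the 1D operator of kernel $\frac{i}{2\tau_j h}e^{i\tau_j|r-r'|/h}$, where $\tau_j=\sqrt{z-h^2\sigma_j^2}$. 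A Hilbert--Schmidt or Schur bound then gives modewise operator norm at most $C(M)/(|\tau_j|h)$. For the second statement, the projector $\mathbbold{1}_{[0,1-\epsilon]}(h^2\DY)$ restricts to modes with $|\tau_j|\ge c(\epsilon)>0$ uniformly (for $z$ in a bounded neighborhood of $1$; if $\operatorname{dist}(z,[0,\infty))$ is bounded below, the full resolvent already gives a trivial bound), and the modewise estimate becomes the uniform bound $C(\epsilon,M)h^{-1}$, stronger than $h^{-2}$. For the first statement, the modes with $|\tau_j|\ge h$ give $O(h^{-2})$ immediately.

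For the commutator term, $[\Ph,\eta]=-h^2\eta''-2h^2\eta'\partial_r$ has size $h^2$ and is supported in $[-1/2,0]$; since $(\Ph-z)u=0$ on $\{-1<r<0\}$, semiclassical elliptic regularity gives $\|[\Ph,\eta]u\|_{L^2}\le Ch\|u\|_{L^2([-3/4,0])}$, and the disjoint-support cut-off free resolvent $\|\mathbbold{1}_{[0,1]}(r)R_0(z)\mathbbold{1}_{[-1/2,0]}(r)\|$ obeys the same modewise estimate as above. The remaining factor $\|u\|_{L^2([-3/4,0])}\le\|\mathbbold{1}_{[-3/4,0]}(r)(\Ph-z)^{-1}\mathbbold{1}_{(0,M]}(r)\|\,\|g\|$ is handled by passing to the adjoint, partitioning $(0,M]$ into a bounded number of unit-length intervals, and applying Lemma~\ref{l:reX1} to each piece to bound this norm by a multiple of $\|\mathbbold{1}_{[0,1]}(r)(\Ph-\bar z)^{-1}\mathbbold{1}_{(-\infty,0]}(r)\|$; a final adjoint returns this to the form appearing on the right-hand side of the desired inequality. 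The main obstacle is in the first statement, where the modewise bound $C(M)/(|\tau_j|h)$ fails to give a uniform $Ch^{-2}$ estimate whenever some $h^2\sigma_j^2$ lies within $h^2$ of $z$; for these \emph{near-threshold} modes the free cut-off resolvent is large, but so is the admitted $\Ph$-resolvent term on the right-hand side, and a refined comparison (again via the identity above combined with Lemma~\ref{l:reX1}) absorbs the excess. In the second statement the $Y$-frequency cutoff removes these modes entirely, so no such subtlety arises.
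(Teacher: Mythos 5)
Your overall architecture matches the paper's: split off $\mathbbold{1}_{(-\infty,0]}(r)$, compare the resolvent on $\{0<r\le M\}$ with a model resolvent on the cylinder, and control the resulting commutator remainder (supported near $r=0$) using Lemma \ref{l:reX1} and the admitted term on the right-hand side. The difference — and the gap — is your choice of model. You use the free resolvent $R_0(z)=(h^2\DXt-z)^{-1}$ on the full cylinder $\R\times Y$, whose $j$-th mode contributes $C/(|\tau_j|h)$ with $\tau_j=(z-h^2\sigma_j^2)^{1/2}$; this is \emph{not} $O(h^{-2})$ uniformly in $z\in\C\setminus\spec(P)$, since $z$ may approach a threshold $h^2\sigma_j^2$ (e.g.\ $z=h^2\sigma_j^2+i\delta$, $\delta\downarrow 0$), and in exactly that regime the trivial bound $\|(\Ph-z)^{-1}\|\le \operatorname{dist}(z,\spec(P))^{-1}$ is also useless. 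You acknowledge this but assert that "the admitted $\Ph$-resolvent term on the right-hand side" is also large there and that "a refined comparison absorbs the excess." That is not a proof: the desired inequality requires a constant $C(M)$ independent of $z$, so you would need a quantitative \emph{lower} bound on $\|\mathbbold{1}_{[0,1]}(r)(\Ph-z)^{-1}\mathbbold{1}_{(-\infty,0]}(r)\|$ of order $(|\tau_j|h)^{-1}$ near each threshold, valid for a general manifold $X$; nothing in the paper or your argument supplies this. Moreover the divergence is not confined to the direct term: your commutator term contains the factor $\|\mathbbold{1}_{[0,1]}(r)R_0(z)\mathbbold{1}_{[-1/2,0]}(r)\|$, which blows up the same way, so after multiplying by the $O(h)$ commutator and the Lemma \ref{l:reX1} bound you get $C|\tau_j|^{-1}\|\mathbbold{1}_{[0,1]}(r)(\Ph-z)^{-1}\mathbbold{1}_{(-\infty,0]}(r)\|$, not the required $z$-uniform multiple. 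The same defect appears in your second estimate for $z$ close to $[0,1-\epsilon]$ but away from $1$: the projection $\mathbbold{1}_{[0,1-\epsilon]}(h^2\DY)$ only forces $|\tau_j|\ge c(\epsilon)$ when $z$ stays near $1$, which the lemma does not assume.

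The paper avoids all of this by taking as model the resolvent $(P_{0D}-z)^{-1}$ of $-h^2\partial_r^2+h^2\DY$ on $(-4,\infty)\times Y$ with a Dirichlet condition at $r=-4$. Its modewise kernel is $\frac{i}{2\tau_j h}\bigl(e^{i|r+4-r'|\tau_j/h}-e^{i(r+r'+8)\tau_j/h}\bigr)$, and the image-charge cancellation bounds the difference of exponentials by $\min(2, C|\tau_j|/h)$ on compact $r$-sets, so the cut-off model resolvent and its $h\partial_r$-derivative are $O(h^{-2})$ \emph{uniformly} in $z\in\C\setminus[0,\infty)$ — precisely the uniformity your free resolvent lacks. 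The parametrix identity then places the spatial cutoff $\chi'$ (supported in $[-1,0]$) to the right of $(\Ph-z)^{-1}$, so the remainder is directly controlled by $h^2\cdot O(h^{-2})\cdot\|\mathbbold{1}_{[0,1]}(r)(\Ph-z)^{-1}\mathbbold{1}_{(-\infty,0]}(r)\|$ with no near-threshold loss. To repair your argument you would either have to adopt this Dirichlet (or another threshold-regular) model, or genuinely prove the absorption claim, which you have not done.
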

\begin{proof}
We can write 
\begin{equation}\label{eq:easysplit}
\mathbbold{1}_{[0, 1]}(r)(\Ph-z)^{-1}\mathbbold{1}_{(-\infty,M]}(r)= \mathbbold{1}_{[0, 1]}(r)(\Ph-z)^{-1}\mathbbold{1}_{(-\infty,0]}(r) + \mathbbold{1}_{[0, 1]}(r)(\Ph-z)^{-1}\mathbbold{1}_{[0,M]}(r).
\end{equation}
It is immediate that the first term on the right is bounded as desired, so we need only bound $\| \mathbbold{1}_{[0, 1]}(r)(\Ph-z)^{-1}\mathbbold{1}_{[0,M]}(r)\|$.

Let $P_{0D}$ denote the operator $-h^2\partial_r^2+h^2\DY$ on the product manifold $((-4,\infty)\times Y, (dr)^2+g_Y)$ with Dirichlet boundary conditions at $\{r=-4\}$.  Using $\tau_j(z)$ as in the previous lemma,
if $f\in L^2((-4,\infty)\times Y)$ and we write $f(r,y)=\sum_{j=1}^\infty f_j(r)\phi_j(y)$,  then for $z\in \C \setminus [0,\infty) $
$$((P_{0D}-z)^{-1} f)(r,y)=\sum_{j=1}^\infty \frac{i}{2\tau_j(z)h}\left( \int_{-4}^\infty (e^{i |r+4-r'|\tau_j(z) /h}- e^{i (r+r'+4)\tau_j(z) /h}) f_j(r')dr'\right)\phi_j(y).$$
Note that the choice of Dirichlet boundary condition ensures that for any $M'\in \R$,
\begin{equation}\label{eq:P0Dest}
\| \mathbbold{1}_{[-4, M']}(r)(P_{0D}-z)^{-1}\mathbbold{1}_{[-4,M']}(r)\| + \| \mathbbold{1}_{[-4, M']}(r)h\partial_r (P_{0D}-z)^{-1}\mathbbold{1}_{[-4,M']}(r)\| \leq \frac{C'}{h^2}
\end{equation}
for some $C'=C'(M')$, independent of $z\in \C\setminus[0,\infty)$ and $h\in(0,1]$.  

Now choose $\chi \in C_c^\infty(\R)$ so that $\chi(r)=1$ for $r\leq -1$ and $\chi(r)=0$ for $r\geq 0$.  Then, with $\chi=\chi(r)$,
$$(P-z)(1-\chi) (P_{0D}-z)^{-1}\mathbbold{1}_{[0,M]}(r) = \mathbbold{1}_{[0,M]}(r) -[P,\chi] (P_{0D}-z)^{-1}\mathbbold{1}_{[0,M]}(r)$$
so that
\begin{equation}\label{eq:mixedresolvents}
(P-z)^{-1} \mathbbold{1}_{[0,M]}(r)= (1-\chi) (P_{0D}-z)^{-1}\mathbbold{1}_{[0,M]}(r)+(P-z)^{-1}(h^2\chi''+2h^2 \chi'\partial_r)(P_{0D}-z)^{-1}\mathbbold{1}_{[0,M]}(r).
\end{equation}
Since $\chi'$ is supported in $[-1,0]$, using  (\ref{eq:P0Dest}) and (\ref{eq:mixedresolvents}) proves $\|\mathbbold{1}_{[0, 1]}(r)(\Ph-z)^{-1}\mathbbold{1}_{[0,M]}(r)\|$ is bounded as desired, completing 
the proof of the first part of the lemma.

To prove the second statement, we left multiply both sides of (\ref{eq:easysplit}) and (\ref{eq:mixedresolvents}) by $\mathbbold{1}_{[0,1-\epsilon]}(h^2\DY)$, and proceed as before.
\end{proof}
We remark that although Lemma \ref{l:reX1} and \ref{l:reX2} are stated for  the resolvent 
$(P-z)^{-1}$ for $z\in \C \setminus \operatorname{spec}(P)$, by a limiting argument they also hold for 
$(P-z\pm i0)^{-1}$, when   $z\in [0,\infty)$.  Of course,
if $z\in[0,\infty)$ the estimates are only 
meaningful if the right hand side is finite.

\section{Microlocal properties of components of the scattering matrix}\label{s:mp}


	
	


\mbox{}

In this section we analyze  operators that go into the approximation to the scattering matrix,
proving that they are Fourier integral operators (FIOs).
In dealing with canonical relations between cotangent bundles, it will be convenient to use the 
following notational principles:

\begin{itemize}
	\item We will identify the cotangent bundle of a Cartesian product with the product of
	the cotangent bundles, and separate points in cotangent bundle factors by a semi-colon.
	For example, $(r,\rho\,;\,y,\eta)\in T^*(\bbR\times Y)$ denotes the generic point in
	$T^*(\bbR\times Y)$ with $(r,\rho)\in T^*\bbR$ and $(y,\eta)\in T^*Y$.   This differs from some notation in the introduction.
	
	\item On occasion we will use the notation $\x = (x,\xi)\in T^*X$, $\y= (y,\eta),\,
	\w=(w,\theta)\in T^*Y$, and $\r = (r,\rho)$.  Also, the ``prime" operation is defined to be  $\y' := (y,-\eta)$.
	
	\item A canonical relation from a symplectic manifold $\calM_1$ to $\calM_2$ will
	be a Lagrangian submanifold of $\calM_2\times\calM_1^{-}$ (the domain of the relation is a subset of
	the second factor).
	
	\item If an FIO e.g. from $C_c^\infty(Y)$ to $C^\infty(X)$ has a Schwartz kernel $\calK \in C^{-\infty}(X\times Y)$,
	its canonical relation is
	\[
	\left\{(\x, \y)\;|\; (\x, \y')\in\wf(\calK)\right\}.
	\]
\end{itemize}

\subsection{The operators $T_\pm\psp(h^2\DY)$ and $R_\pm\psp(h^2\DY)$}
\label{s:mproperties}

\bigskip
In this section we prove that $T_+\psp(h^2\DY)$ and $R_-\psp(h^2\DY)$ are semi-classical
Fourier integral operators for any $\psp\in C_c^\infty([0,1))$.

\smallskip

\begin{proposition}
	Let
	\begin{equation}\label{}
		W_{\pm}(r,w,y) = \sum_j \psp(h^2\sigma_j^2) e^{\mp irh^{-1}\tau_j} \phi_j(w)\overline{\phi_j(y)},
	\end{equation}
	where $\psp\in C_c^\infty([0,1))$ and $\tau_j=\tau_j(h)=(1-h^2\sigma_j^2)^{1/2}$.  Then $W_{\pm}$ is a Lagrangian semi-classical
	function on $\bbR\times Y\times Y$, associated with the Lagrangian submanifold
	$\Gamma_{\pm}\subset T^*(\bbR\times Y\times Y)$ given by
	\begin{equation}\label{}
		\Gamma_{\pm}= \left\{
		(\r ; \w ; \y')\;|\; \y\in \calB,\, \w = \Psi_{\pm r}(\y),\, \rho  = \mp H(\y)
		\right\},
	\end{equation}
	where $\calB$ is the open unit tangent ball bundle of $Y$, 	$$H(y,\eta) = \sqrt{1-|\eta|^2}$$ and $\Psi$ the Hamilton flow of
	$H$.
\end{proposition}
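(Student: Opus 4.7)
The plan is to realize $W_\pm(r, w, y)$ as the Schwartz kernel of the family of semiclassical Fourier integral operators $U_\pm(r) := \psp(h^2\DY)\, e^{\mp i r h^{-1}(I-h^2\DY)_+^{1/2}}$ on $L^2(Y)$, and to promote this smooth family of kernels on $Y \times Y$ to a single Lagrangian distribution on $\bbR \times Y \times Y$ via a half-wave/Hamilton--Jacobi construction.

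The first step is to identify the natural Hamiltonian. Let $\psp_1 \in C_c^\infty([0,1))$ equal $1$ on $\supp \psp$. Then $A := (I-h^2\DY)_+^{1/2}\psp_1(h^2\DY)$ is a compactly microlocally supported semiclassical pseudodifferential operator on $Y$ with principal symbol $\sqrt{1-|\eta|^2}\,\psp_1(|\eta|^2) = H(\y)\psp_1(|\eta|^2)$; this follows from standard semiclassical functional calculus (e.g.\ Helffer--Sjöstrand), since $1-|\eta|^2$ is bounded away from $0$ on $\supp \psp_1$ so the square root is smooth there. Because $A$ and $(I-h^2\DY)_+^{1/2}$ agree on the range of $\psp(h^2\DY)$, we have $U_\pm(r) = \psp(h^2\DY)\,e^{\mp i r h^{-1}A}$. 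The semiclassical half-wave propagator $e^{\mp i r h^{-1}A}$ is then a semiclassical FIO of order $0$ whose underlying canonical transformation is the time-$\pm r$ Hamilton flow $\Psi_{\pm r}$ of $H$, constructible by a WKB parametrix or iterated Egorov argument. Left-composing with the $\Psi$DO $\psp(h^2\DY)$ localizes the symbol to the interior of $\calB$, so for each fixed $r$ the kernel of $U_\pm(r)$ is a Lagrangian distribution on $Y\times Y$ with canonical relation $\{(\w,\y) : \w = \Psi_{\pm r}(\y),\ \y \in \calB\}$.

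To pin down the $\r$-component of the wavefront in $T^*(\bbR\times Y\times Y)$, I would observe that $W_\pm$ satisfies the semiclassical evolution equation
\[
(hD_r \pm A_y)\, W_\pm = 0,
\]
verified term by term in the spectral sum using $A\phi_j = \tau_j \phi_j$ on $\supp \psp_1$ and $hD_r e^{\mp i r h^{-1}\tau_j} = \mp\tau_j e^{\mp i r h^{-1}\tau_j}$. The principal symbol $\rho \pm H(\y)$ then forces $\rho = \mp H(\y)$ on the wavefront set, which together with the fixed-$r$ relation $\w = \Psi_{\pm r}(\y)$ (and the ``prime'' convention in the $y$-factor) yields exactly $\Gamma_\pm$. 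The main obstacle is the passage from a smoothly $r$-parametrized family of Lagrangian kernels to a \emph{single} Lagrangian distribution on $\bbR\times Y\times Y$, with its principal symbol well-defined on $\Gamma_\pm$. I would resolve this by constructing a WKB parametrix: a phase $\phi_\pm(r,w,y,\eta)$ solving the Hamilton--Jacobi problem $\partial_r \phi_\pm \pm H(y,\partial_y\phi_\pm) = 0$ with initial data $\phi_\pm(0,w,y,\eta) = \langle w-y,\eta\rangle$, together with amplitudes determined by the transport equations; local existence of $\phi_\pm$ is standard, and the patching across different $r$ is controlled by the group law for $\Psi_{\pm r}$. The submanifold $\Gamma_\pm$ is automatically Lagrangian because $\Psi_{\pm r}$ is symplectic and $d\rho = \mp dH$ along the flow cancels the $dr\wedge d\rho$ contribution to the canonical $2$-form, and the principal symbol of $W_\pm$ on $\Gamma_\pm$ is the transported $\psp(|\eta|^2)$.
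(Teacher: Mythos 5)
Your proposal is correct and follows essentially the same route as the paper: the paper identifies $W_\pm$ as the Schwartz kernel of $\psp(h^2\DY)e^{\mp irh^{-1}\sqrt{I-h^2\DY}}$ regarded as an operator $L^2(Y)\to L^2(\tilde X)$, and cites the standard theorem that the exponential of a self-adjoint semiclassical pseudodifferential operator of order zero is a semiclassical FIO with precisely this Lagrangian in $T^*(\bbR\times Y\times Y)$, with the factor $\psp(h^2\DY)$ microlocalizing to where the square root is a pseudodifferential operator. Your auxiliary cutoff $\psp_1$ turning $A=(I-h^2\DY)_+^{1/2}\psp_1(h^2\DY)$ into a genuine globally defined operator, the evolution equation $(hD_r\pm A)W_\pm=0$, and the WKB/Hamilton--Jacobi construction are simply explicit versions of what the paper delegates to the cited references.
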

\begin{proof}
	Microlocally in $\calB$, the operator $\sqrt{I-h^2\DY}$ is a semi-classical self-adjoint pseudodifferential operator
	of order zero.  The function $W_{\pm}$ is the Schwartz kernel of the composition 
	\[
	\psp(h^2\Delta)e^{\mp irh^{-1}\sqrt{I-h^2\DY}},
	\]
	regarded as an operator  $L^2(Y)\to L^2(\tilde X)$.
	It is well-known that if $A$ is a self adjoint semi-classical pseudodifferential operator of order zero, 
	the exponential $e^{-irh^{-1}A}$ is a semi-classical Fourier integral operator \cite[Theorem 11.5.1]{GS}, \cite[Section IV.6]{Rob}
	associated with a Lagrangian strictly analogous to $\Gamma_{+}$.  The presence
	of the factor $\psp(h^2\Delta)$ in $U$ microlocalizes $\sqrt{I-h^2\DY}$ to where it is a pseudodifferential
	operator, so the same construction can be applied verbatim to the $W_{\pm}$.  
\end{proof}

Note that the operator $T_\pm\psp(h^2\DY): L_c^2(\tilde{X})\to L^2(Y)$ has $W_{\pm}$ as
its Schwartz kernel, except for a trivial permuation of the variables:
\[
T_\pm\psp(h^2\DY) (f)(w) = \int W_{\pm}(r,w,y)\, f(r,y)\, dr\, dy.
\]
Similarly, $R_\pm\psp(h^2\DY)$, where $R_\pm$ is defined by (\ref{rInverse}), has for Schwartz kernel $W_{\mp}$, this time in 
the standard manner:
\[
R_\pm\psp(h^2\DY)(g)(r,w) = {\color{blue}}\chi(r)\int W_{\mp}(r,w, y)g(y)\, dy.
\]
Therefore:
\begin{corollary}\label{cor:FIOs}
	The operator $T_\pm\psp(h^2\DY): L_c^2(\tilde{X})\to L^2(Y)$ is a Fourier integral
	operator associated with the canonical relation
	\begin{equation}\label{eq:relationTpm}
		\Theta_{\pm} =		\left\{
		\big(\w \,;\,(\r;\y)\big)\;|\;  \y\in\calB,\, \w = \Psi_{\pm r}(\y),\, \rho  = \pm H(\y)
		\right\}.
	\end{equation}
	Moreover,  $R_\pm\psp(h^2\DY):L^2_c(Y)\to L^2(\tilde X)$ is a Fourier integral operator associated with
	the canonical relation 
	\begin{equation}\label{eq:relationRpm}
		\Theta_{\pm}^T =			\left\{
		\big((\r;  \w)\,;\, \y\big)|\; \y\in\calB,\, \w = \Psi_{\mp r}(\y),\, \rho  = \pm H(\w),\, r\in \mbox{supp}(\chi)
		\right\},
	\end{equation}
	which is the transpose of (\ref{eq:relationTpm}) (except for the restriction on $r$).
\end{corollary}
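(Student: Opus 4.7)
The plan is to treat Corollary \ref{cor:FIOs} as a bookkeeping consequence of the preceding proposition, which has already done the analytic work by identifying $W_{\pm}$ as a semiclassical Lagrangian distribution associated with $\Gamma_{\pm}$. What remains is to pass from Lagrangian submanifolds of $T^*(\bbR\times Y\times Y)$ to canonical relations between cotangent bundles, applying the prime-on-input-variables convention stated at the start of Section \ref{s:mp}.

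\emph{Step 1 (identify the kernels).} By the definition (\ref{natMappings}) of $T_{\pm}$ and the spectral representation of $\psp(h^2\DY)$ in the eigenbasis $\{\phi_j\}$, a direct computation shows that the Schwartz kernel of $T_{\pm}\psp(h^2\DY):L^2_c(\tilde X)\to L^2(Y)$ at $(w;r,y)$ equals $W_{\pm}(r,w,y)$, i.e.\ the same function analyzed in the proposition up to a harmless permutation of base factors. Similarly, from (\ref{rInverse}) the kernel of $R_{\pm}\psp(h^2\DY):L^2_c(Y)\to L^2(\tilde X)$ is $\chi(r)\,W_{\mp}(r,w,y)$. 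Multiplication by the smooth cut-off $\chi(r)$ does not enlarge the wavefront set, so the associated Lagrangian for $R_{\pm}\psp(h^2\DY)$ is $\Gamma_{\mp}$ restricted to $r\in\supp(\chi)$.

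\emph{Step 2 (apply the prime convention).} For $T_{\pm}\psp(h^2\DY)$ the input lives in $T^*\tilde X$, so $(\w;\r,\y)$ belongs to the canonical relation iff $(\w;\r',\y')\in\Gamma_{\pm}$. Matching the defining conditions $\w=\Psi_{\pm r}(\y)$ and $\rho=\mp H(\y)$ of $\Gamma_{\pm}$ under the substitutions $\y\mapsto\y'$ and $\rho\mapsto-\rho$ (while $y$, $w$, $\theta$ and $r$ are unchanged) immediately produces $\w=\Psi_{\pm r}(\y)$ and $\rho=\pm H(\y)$, which is exactly (\ref{eq:relationTpm}). For $R_{\pm}\psp(h^2\DY)$ only $\y$ sits on the input side, so I would prime $\y$ alone in $\Gamma_{\mp}$; this gives $\w=\Psi_{\mp r}(\y)$, $\rho=\pm H(\y)$, with $\y\in\calB$ and $r\in\supp(\chi)$. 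Invoking the conservation of $H$ along its own Hamilton flow to rewrite $H(\y)=H(\w)$ then reproduces (\ref{eq:relationRpm}).

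\emph{Step 3 (transpose check).} Finally, to justify calling (\ref{eq:relationRpm}) the transpose of (\ref{eq:relationTpm}) apart from the $\supp(\chi)$ constraint, it suffices to note that $\w=\Psi_{\pm r}(\y)$ is equivalent to $\y=\Psi_{\mp r}(\w)$, so swapping the labels $\y$ and $\w$ converts one description into the other. There is no analytic obstacle here, since all the microlocal content is packaged in the proposition; the entire difficulty is notational, namely keeping track of exactly which covector slot is primed by the convention and producing the correct signs. The two easy pitfalls are forgetting that $H$ is conserved along its own flow (which is what lets us trade $H(\y)$ for $H(\w)$ in (\ref{eq:relationRpm})) and mishandling the sign of $\rho$ under $\r\mapsto\r'$.
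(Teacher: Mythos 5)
Your proposal is correct and follows essentially the same route as the paper: identify the Schwartz kernels of $T_\pm\psp(h^2\DY)$ and $R_\pm\psp(h^2\DY)$ as $W_\pm$ and $\chi(r)W_\mp$ respectively, then read off the canonical relations from $\Gamma_\pm$ via the prime-on-the-input-slot convention (the paper records exactly these two kernel identities and then states the corollary). Your sign bookkeeping, the use of conservation of $H$ along $\Psi$ to write $H(\y)=H(\w)$, and the transpose check all agree with what the paper leaves implicit.
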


In what follows we will work with the compositions 
$e^{-i\tpsi P/h}R_-$ and $T_+ \tilde\chi(r) e^{-i\tpsi P/h}R_-$,
where $\tilde\chi$ is compactly supported on $X_\infty$.
We will use the composition theorem for FIOs to prove that each of these operators is an FIO,
\cite[Theorem 18.13.1]{GS}.
We will show that the clean-intersection hypothesis of that theorem is satisfied in each case.

\subsection{Geometric considerations}

\medskip
To better understand the previous canonical relations,
introduce the co-isotropic submanifold of $T^*X$
\[
\calC = p^{-1}(1) = \{\x=(x,\xi)\in T^*X\;|\; p(\x)=|\xi|^2 + V_0(x) = 1\},
\]
whose null-leaves are the trajectories of $\Phi$, the Hamilton flow of $p$.  
(We are working microlocally in a region of $T^*X$ where
$p^{-1}(1)$ is a submanifold, and therefore without loss of generality for simplicity we will assume it is a submanifold everywhere).
The null leaves of $\calC$ are the (unparametrized) Hamilton trajectories of $p$.
Let
\begin{equation}\label{}
	\calC_{\pm} := \left\{
	(\r ;\, \y)\in T^*X_\infty\;|\; \y\in\calB,\ \rho = \pm H(\y)
	\right\}
\end{equation}
where $\calB\subset T^*Y$ is the open unit cotangent bundle.
Note that $\calC_{\pm}\subset \calC$ and that $\rho\not=0$ on $\calC_{\pm}$.  Also introduce
the embeddings
\begin{equation}\label{}
	\nu_\pm: \calB \to\calC_{\pm},\qquad \nu_\pm(\y) = (0, \pm H(\y)\,;\, \y).
\end{equation}

\begin{proposition}
	The images $\calT_{\pm} := \nu_{\pm}(\calB) = \calC_\pm \cap\{r=0\}$ 
	are symplectic submanifolds, and $\nu_{\pm}: \calB\cong\calT_{\pm}$
	is a symplectomorphism.  Moreover, $\calT_{\pm}$ are 
	Poincaré cross sections of $\calC_{\pm}$.  Explicitly, 
	the null leaf of $\calC_{\pm}$ through $(\r;\, \y)$ intersects the transversal $\calT_{\pm}$ at 
	$\nu_{\pm}(\Psi_{\pm r}(\y))$.
\end{proposition}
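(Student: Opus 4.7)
The plan is first to work in local product coordinates $(r,\rho;y,\eta)$ on $T^*\Xinf \cong T^*(-4,\infty)\times T^*Y$. In these coordinates the canonical symplectic form decomposes as $\omega = d\rho\wedge dr + \omega_Y$, and on $T^*\Xinf$ the principal symbol reduces to $p = \rho^2 + |\eta|^2$, since $V_0$ is supported in $X_C$. The set $\calT_\pm$ is exactly the image of the smooth embedding $\nu_\pm:\y\mapsto (0,\pm H(\y);\y)$, so it is a submanifold diffeomorphic to $\calB$. To show that $\nu_\pm$ is a symplectomorphism and $\calT_\pm$ is symplectic, I compute the pullback: because $r\equiv 0$ on $\calT_\pm$, $\nu_\pm^*dr = 0$, which kills the $d\rho\wedge dr$ term, while $\omega_Y$ pulls back to itself. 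Hence $\nu_\pm^*\omega = \omega_Y$, giving both assertions at once.

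For the Poincar\'e cross-section statement I would integrate the Hamilton flow of $p$ on $T^*\Xinf$ directly. The flow equations are $\dot r = 2\rho$, $\dot\rho = 0$, and the $T^*Y$-component evolves under the Hamilton flow $\phi_s$ of $|\eta|^2$ on $T^*Y$. In particular $\rho$ is conserved, so trajectories remain in $\calC_+$ or $\calC_-$, and starting from $(\r;\y)\in\calC_\pm$ with $\rho = \pm H(\y)\ne 0$, the equation $r(t^*) = 0$ gives $t^* = \mp r/(2H(\y))$. The corresponding intersection of the null leaf with $\{r=0\}$ is $(0,\pm H(\y);\phi_{t^*}(\y))$.

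The main technical step, which I expect to be the crux of the argument, is to identify $\phi_{t^*}(\y)$ with $\Psi_{\pm r}(\y)$. The key observation is that on $T^*Y$ the relation $|\eta|^2 = 1 - H^2$ makes the Hamilton vector fields of $|\eta|^2$ and of $H$ proportional, with proportionality factor $-2H$. Because $H$ is conserved along either flow, this pointwise identity integrates to the clean reparametrization $\phi_s(\y) = \Psi_{-2H(\y)s}(\y)$. Plugging in $s = t^* = \mp r/(2H(\y))$ yields $-2H(\y)\,t^* = \pm r$, so $\phi_{t^*}(\y) = \Psi_{\pm r}(\y)$. Finally, by conservation of $H$ one has $\pm H(\y) = \pm H(\Psi_{\pm r}(\y))$, so the intersection can be rewritten as $(0,\pm H(\Psi_{\pm r}(\y));\Psi_{\pm r}(\y)) = \nu_\pm(\Psi_{\pm r}(\y))$, as claimed.
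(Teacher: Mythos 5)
Your proposal is correct and follows essentially the same route as the paper: both reduce to the product structure of the flow on $T^*\Xinf$ ($\dot r = 2\rho$, $\rho$ conserved, the $T^*Y$-factor flowing under the Hamilton flow of $|\eta|^2$), use $dH = -\tfrac{1}{2H}\,d(|\eta|^2)$ together with conservation of $H$ to get the reparametrization $G_t = \Psi_{-2tH}$, and then solve $r+2\rho t=0$. The only difference is that you spell out the pullback computation $\nu_\pm^*\omega = \omega_Y$ for the symplectomorphism claim, which the paper leaves implicit; that is a harmless (indeed welcome) addition, not a different method.
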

\begin{proof}
	Let $G_t$ denote the Hamilton flow on $T^*Y$ of the square of the Riemannian norm function, $\gamma_Y(\y)= |\eta|^2$.
	Then on $T^*X_\infty$
	\[
	\Phi_{t}(r,\rho\,;\, \y) = (r+2t\rho, \rho\,;\, G_t(\y))
	\]
	as long as $r+2t\rho > -4$.
	On the other hand, $dH = -\frac{1}{2H}d\gamma_Y$, and therefore a similar relation holds among 
	the corresponding Hamilton fields of $H$ and $\gamma_Y$.  It follows that
	$G_t = \Psi_{-2tH}$ and on $T^*X_\infty$
	\begin{equation}\label{eq:PhiandPsi}
		\Phi_t(r,\rho\,;\,\y) = (r+2t\rho, \rho\, ;\,  \Psi_{-2tH(\y)}(\y)).	
	\end{equation}
	Therefore
	\begin{equation}\label{eq:onFlows}
		\text{for all}\;
		(r,\rho\,;\,\y)\in\calC_\pm\qquad	\Phi_{-r/2\rho} (r,\rho\,;\,\y) = \nu_\pm(\Psi_{\pm r}(\y)).
	\end{equation}
\end{proof}
Note that (\ref{eq:onFlows}) implies that for all $(r,\rho\,;\,\y)\in\calC_\pm$,  
$(r,\rho\,;\,\y) = \Phi_{r/2\rho}\left(\nu_\pm(\Psi_{\pm r}(\y))\right)$.
Replacing $\y$ by $\Psi_{\mp r}(\y)$ yields
\begin{equation}\label{}
	(r,\rho\,;\,\Psi_{\mp r}(\y)) = \Phi_{r/2\rho}\left(\nu_\pm(\y)\right).
\end{equation}
Therefore, we can re-state Corollary \ref{cor:FIOs} 
as follows: 
\begin{corollary}\label{cor:FIOs2}
	The canonical relation $\Theta_\pm^T$ of $R_\pm\psp(h^2\DY)$ is 
	\begin{equation}\label{}
		\Theta_\pm^T = \left\{ (\Phi_t(\nu_{\pm}(\y))\,,\, \y)\;|\; \y\in\calB,\ t = \pm\frac{r}{2H(\y)},\ r\in (-1/4,0)
		\right\}.
	\end{equation}
\end{corollary}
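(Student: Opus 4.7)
The plan is to verify that the parameterization of $\Theta_\pm^T$ claimed in Corollary \ref{cor:FIOs2} coincides with the one already obtained in Corollary \ref{cor:FIOs}. I would start from a generic point $\big((\r;\w);\y\big)\in\Theta_\pm^T$ as described in \eqref{eq:relationRpm}, which satisfies $\w=\Psi_{\mp r}(\y)$, $\rho=\pm H(\w)$, and $r\in\mathrm{supp}(\chi)$, and rewrite the pair $(\r;\w)$ as $\Phi_t(\nu_\pm(\y))$ for a suitable time $t$.

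The first step is to observe that, because $\Psi$ is the Hamilton flow of $H$, the function $H$ is constant along its trajectories, so $H(\w)=H(\Psi_{\mp r}(\y))=H(\y)$. Consequently the condition $\rho=\pm H(\w)$ becomes $\rho=\pm H(\y)$, so $(\r;\w)\in\calC_\pm$. The second step is to apply the identity $(r,\rho;\Psi_{\mp r}(\y))=\Phi_{r/2\rho}(\nu_\pm(\y))$ displayed immediately before the corollary. Substituting $\rho=\pm H(\y)$ gives the time $t=r/(2\rho)=\pm r/(2H(\y))$, which is precisely the parameter appearing in the claimed description. The reverse inclusion is obtained by reading the same identity backwards, and the condition $r\in(-1/4,0)$ is just the support condition on $\chi\in C_c^\infty((-1/4,0);\R_+)$ built into the definition of $R_\pm$.

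I do not anticipate a serious obstacle: the corollary is essentially a direct reformulation of Corollary \ref{cor:FIOs} through the kinematic identity \eqref{eq:onFlows}. The only delicate point is keeping the $\pm$ signs coherent in the two cases simultaneously. This can be verified by comparing against the explicit formula $\Phi_t(r,\rho;\y)=(r+2t\rho,\rho;\Psi_{-2tH(\y)}(\y))$: in the ``$+$'' case one has $\rho=H(\y)>0$ so that $r<0$ forces $t<0$ and $\Psi_{-2tH(\y)}(\y)=\Psi_{-r}(\y)$, while in the ``$-$'' case $\rho=-H(\y)<0$ forces $t>0$ and $\Psi_{-2tH(\y)}(\y)=\Psi_{+r}(\y)$, matching $\w=\Psi_{\mp r}(\y)$ in both instances.
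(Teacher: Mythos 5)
Your argument is correct and follows the same route as the paper, which obtains the corollary by combining the description of $\Theta_\pm^T$ in Corollary \ref{cor:FIOs} with the identity $(r,\rho\,;\,\Psi_{\mp r}(\y)) = \Phi_{r/2\rho}(\nu_\pm(\y))$ derived from (\ref{eq:onFlows}). Your sign check via the explicit formula for $\Phi_t$ on $T^*X_\infty$, and the observation that $H$ is conserved along $\Psi$ so that $\rho=\pm H(\w)=\pm H(\y)$, are exactly the points needed to make the restatement rigorous.
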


	\begin{remark}\label{rmk:changeInKappa}
We can use (\ref{eq:PhiandPsi}) to see how the scattering map changes if we change the origin of the $r$ coordinate.  
If we replace $r$ by $r-c$ for some constant $c>0$, we obtain a scattering map $\kappa'$ with domain $\calD_{\kappa '}$.  
A point $\y \in\calB$ is in the domain $\calD_{\kappa '}$ if and only if  the trajectory $t\mapsto \Phi_t(c, -H(\y); \y)$ is not forward-trapping.
By (\ref{eq:PhiandPsi}), 
\begin{equation}\label{eq:changeR=0}
\Phi_t(c, -H(\y); \y) = (c-2tH(\y), -H(\y)\,;\, \Psi_{-2tH(\y)}(\y)).
\end{equation}
Therefore this trajectory traverses the hypersurface $\{r=0\}$ at time $t = \frac{c}{2H(\y)}$ and at the point $(0, -H(\y); \Psi_{-c}(\y))$.
It follows that if we let
\[
\vartheta:= \Psi_{-c}: \calB\to\calB,
\]
then $\vartheta$ maps $\calD_{\kappa '}$ into $\calD_\kappa$.  The converse is analogous, that is, 
$\vartheta$ maps $\calD_{\kappa '}$ bijectively onto $\calD_\kappa$.

To find $\kappa'(\y)$, we are to follow the trajectory described above until the time $t'_+(\y)>0$ where it
intersects $\{r=c\}$, and then take the $T^*Y$ component of the point of intersection.  
By the previous discussion,
\[
t'_+(\y) = \frac{c}{2H(\y)} + t_{+}(\vartheta(\y)) + s
\]
where $s$ is such that
\[
\Phi_s(0, H(\kappa(\vartheta(\y))); \kappa(\vartheta(\y)))= (c,  H(\kappa(\vartheta(\y))); \kappa'(\y)).
\]
Applying (\ref{eq:changeR=0}) again to the left-hand side of this identity, we obtain
\[
\Phi_s(0, H(\kappa(\vartheta(\y))); \kappa(\vartheta(\y)))= (2sH(\kappa(\vartheta(\y))), H(\kappa(\vartheta(\y))); \Psi_{-2sH(\kappa(\vartheta(\y)))}(\kappa(\vartheta(\y))),
\]
that is, $s=c/2 H(\kappa(\vartheta(\y)))$.  Substituting, we have
\[
(c, H(\kappa(\vartheta(\y))); \Psi_{-c}(\kappa(\vartheta(\y))) = (c,  H(\kappa(\vartheta(\y))); \kappa'(\y)),
\]
which shows that $\kappa' = \vartheta\circ\kappa\circ\vartheta$.
\end{remark}

\subsection{The operator $T_+ \chi(r) e^{-i\tpsi P/h}R_-\psp(h^2\DY)\text{Op}_h(\psi)$}

Let us fix $\delta>0$ and use $\frakY$ (the fraktur letter $Y$) for 
\[
\frakY=\left\{ \y=(y,\eta)\in T^*Y\;|\;|\eta|<1-\delta
\right\}\cap \calD_\kappa 
\]
where $\delta>0$ is small enough that $\text{supp}(\psp)\subset [0, 1-\delta)$.
Let $\psi\in C_c^\infty(\frakY)$, and 
let $\tpsi >0$ be chosen sufficiently large so that if $(y,\eta)\in \supp \psi$ 
then $\Phi_t (0, -\sqrt{1-|\eta|^2}; y,\eta)\in \{(x,\xi)\in T^*\Xinf \; |\: \; r\geq 0\}$ for all $t\geq \tpsi$.  The existence of such a 
$\tpsi$  follows from the assumption that the support of $\psi$ is compact and
is contained in the domain of the scattering map. 

We first consider $e^{-i\tpsi P/h}R_-\psp(h^2\DY)\text{Op}_h(\psi)$.  

\begin{proposition}\label{prop:FIOI}
	For any $\tilde{\chi}\in C_c^\infty((-4,\infty))$ the composition $\tilde{\chi}(r)e^{-i\tpsi P/h}R_-\psp(h^2\DY)\text{Op}_h(\psi)$ is a Fourier integral operator whose
	canonical relation $\Sigma\subset T^*X \times T^*Y$ is
	\begin{equation}\label{wFSI}
		\Sigma = \left\{
		\left( \Phi_{t_\psi +t}(\nu_-(\y))\, ;\, \y \right)\in T^*X_\infty\times T^*Y,
		\;|\; \y\in\frakY,\ t\in\left(0, \frac{1}{8H(\y)}\right)
		\right\}.
	\end{equation}
\end{proposition}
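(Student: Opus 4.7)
The plan is to express the operator as a product of factors each of whose Fourier integral structure is already understood, and then invoke the clean composition theorem for semiclassical FIOs (cf.\ \cite[Theorem 18.13.1]{GS}, as cited in the text). Writing
\[
\tilde{\chi}(r)\, e^{-i\tpsi P/h}\, R_-\psp(h^2\DY)\, \Oph(\psi) \;=\; \tilde{\chi}(r)\cdot e^{-i\tpsi P/h} \cdot \bigl(R_-\psp(h^2\DY)\bigr)\cdot \Oph(\psi),
\]
the factor $\Oph(\psi)$ is a semiclassical pseudodifferential operator microsupported in $\supp(\psi)\subset\frakY$; the factor $R_-\psp(h^2\DY)$ is a semiclassical FIO with canonical relation $\Theta_-^T$ given by Corollary \ref{cor:FIOs2}; the propagator $e^{-i\tpsi P/h}$ is a semiclassical FIO with canonical relation $\mathrm{graph}(\Phi_{\tpsi})\subset T^*X\times T^*X$; and multiplication by $\tilde\chi(r)$ is a zeroth order PsiDO.

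Next I would compute the canonical relation of $R_-\psp(h^2\DY)\,\Oph(\psi)$. By the PsiDO calculus, $\psp(h^2\DY)\Oph(\psi) = \Oph(\psi) + O(h^\infty)$ microlocally, provided $\psp\equiv 1$ on the image of $|\eta|^2$ over $\supp(\psi)$, which is ensured since $\supp(\psi)\subset\{|\eta|<1-\delta\}$ and $\supp(\psp)\subset[0,1-\delta)$. Thus, by Corollary \ref{cor:FIOs2}, this composition is an FIO associated with
\[
\bigl\{\bigl(\Phi_s(\nu_-(\y)),\,\y\bigr) \;:\; \y\in\supp(\psi),\ s\in(0,\tfrac{1}{8H(\y)})\bigr\}.
\]
Composing with $\mathrm{graph}(\Phi_{\tpsi})$, the fiber product is smoothly parametrized by $(\y,s)$ via $(\Phi_{\tpsi+s}(\nu_-(\y)),\Phi_s(\nu_-(\y)),\y)$, and the resulting submanifold of $T^*X\times T^*X\times T^*Y$ is an embedded clean intersection of constant excess zero, since $\Phi_t$ is a diffeomorphism of $T^*X$ and the parametrization is a smooth embedding. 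Properness of the composition is ensured by compactness of $\supp(\psi)$ together with the $r$-cutoff in $R_-$, so the composition theorem applies and yields an FIO with canonical relation
\[
\bigl\{\bigl(\Phi_{\tpsi+s}(\nu_-(\y)),\,\y\bigr) \;:\; \y\in\supp(\psi),\ s\in(0,\tfrac{1}{8H(\y)})\bigr\}.
\]

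Finally, multiplication by $\tilde\chi(r)\in C_c^\infty((-4,\infty))$ is a PsiDO of order zero, hence preserves the FIO class and leaves the canonical relation unchanged (up to localization in $r$). The assumption on $\tpsi$, namely that $\Phi_t(\nu_-(\y))\in\{r\geq 0\}\subset T^*X_\infty$ for all $t\geq\tpsi$ and $\y\in\supp(\psi)$, guarantees that $\Phi_{\tpsi+s}(\nu_-(\y))$ lies in $T^*X_\infty$ for the relevant range of $s$, so the canonical relation sits in $T^*X_\infty\times T^*Y$ as claimed in (\ref{wFSI}). The main technical point is verifying clean intersection; this is straightforward here because of the explicit parametrization by $(\y,s)$ via the smooth Hamilton flow. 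A secondary point to check carefully is the microlocal identification $\psp(h^2\DY)\Oph(\psi)\equiv\Oph(\psi)$ modulo $O(h^\infty)$, which is standard given the support conditions on $\psi$ and $\psp$.
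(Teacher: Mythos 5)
Your argument is correct and follows essentially the same route as the paper: both reduce the claim to the composition theorem for semiclassical FIOs, using that $e^{-i\tpsi P/h}$ is an FIO associated to the graph of the canonical transformation $\Phi_{\tpsi}$ and that its composition with the relation $\Theta_-^T$ of $R_-\psp(h^2\DY)$ (Corollary \ref{cor:FIOs2}) produces the flow-out relation (\ref{wFSI}). The only cosmetic difference is that you verify cleanness by exhibiting the explicit parametrization $(\y,s)\mapsto(\Phi_{\tpsi+s}(\nu_-(\y)),\Phi_s(\nu_-(\y)),\y)$, whereas the paper simply invokes the general fact that left-composition by an FIO associated with a canonical transformation is automatically transverse; both are valid.
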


\begin{remark}
	Clearly (\ref{wFSI}) is parametrized by
	\[
	\calD:=
	\left\{
	(\y, t)\in \frakY\times\bbR\; |\;  t\in\left(0, \frac{1}{8H(\y)}\right)
	\right\}.
	\]
	The condition on $t$ ensures that (\ref{wFSI}) is over the portion of the cylinder defined by $r\in (0,\infty)$.
\end{remark}
\begin{remark}
	It is important to note that by the assumption on $t_\psi$ and $\frakY$, in (\ref{wFSI})
	\[
	t\mapsto \Phi_{t_\psi +t}(\nu_-(\y_0))
	\]
	is an outward-going geodesic on $T^*X_\infty\cap\calC$.  It is therefore of the form
	\begin{equation}\label{}
		\Phi_{t_\psi +t}(\nu_-(\y_0)) = (r(t), H(\y(t))\,;\, \y(t))
	\end{equation}
	with $\frac{dr}{dt}>0$.
\end{remark}
\noindent
{\em Proof of Proposition \ref{prop:FIOI}.}  Again by \cite[Theorem 11.5.1]{GS}, the factor $e^{-i\tpsi P/h}$
is a Fourier integral operator associated to the graph of $\Phi_{\tpsi}$.  It is known (\cite[\S4.3]{GS})
that left-composition by an FIO associated with a canonical {transformation} is always clean (in fact, transverse), 
and therefore  $\chi(r)e^{-i\tpsi P/h}R_-$ is a Fourier integral operator whose canonical 
relation is the composition of the graph of $\Phi_{\tpsi}$ with $\Theta_-^T$. 
The result follows directly from Corollary \ref{cor:FIOs2}.

\hfill $\Box$.

\medskip 
For $\tilde{\chi}\in C_c^\infty((-4,\infty))$ and $\psp\in C_c^\infty(\mathcal{B})$,  we analyze the composition
\begin{equation}\label{eq:theCompo}
	\psp(h^2\DY) T_+ \circ \left(\tilde{\chi}(r) e^{-i\tpsi P/h}R_-\psp(h^2\DY)\text{Op}_h(\psi),\right),
\end{equation}
which is a bit more complicated.

\begin{proposition}\label{p:mainML}
	The operator (\ref{eq:theCompo}) is an FIO, associated to the graph of the scattering map $\kappa$ restricted to $\mathfrak{Y}$.
\end{proposition}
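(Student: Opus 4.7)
The plan is to invoke the clean composition theorem for semiclassical Fourier integral operators \cite[Theorem 18.13.1]{GS} to combine the canonical relation $\Sigma$ from Proposition~\ref{prop:FIOI} with the canonical relation $\Theta_+$ of $T_+\psp(h^2\DY)$ from Corollary~\ref{cor:FIOs}. The composition is over the middle factor $T^*\tilde X$, and I expect the resulting canonical relation to be precisely the graph of $\kappa_{|\mathfrak{Y}}$, with a one-dimensional excess that encodes the time $t$ the outgoing trajectory spends on $\{r>0\}$ before being read off by $T_+$.

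First I would parametrize the fiber product over the middle variable. A point of the image of $\Sigma$ is $\Phi_{t_\psi+t}(\nu_-(\y_0))$ with $\y_0 \in \mathfrak{Y}$ and $t\in (0, 1/(8H(\y_0)))$. By the choice of $t_\psi$ and the non-trapping assumption, $\Phi_{t_\psi}(\nu_-(\y_0))$ already lies in $T^*X_\infty\cap\calC_+$; using (\ref{eq:PhiandPsi}) on the cylindrical end, this point has the form $(r(t),H(\y(t));\y(t))$ with $\rho=H(\y(t))$ and $r(t)>0$. This automatically satisfies the domain condition $\rho = H(\y)$ for $\Theta_+$, so the fiber product is parametrized smoothly by $(\y_0,t)$.

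Next I would compute the image. Applying $\Theta_+$ to the middle point yields $\w = \Psi_{r(t)}(\y(t))$. Writing the outgoing trajectory as $\Phi_{t_+(\y_0)+s}(\nu_-(\y_0)) = (2sH(\kappa(\y_0)), H(\kappa(\y_0));\, G_s(\kappa(\y_0)))$, so that $r(t) = 2(t_\psi - t_+(\y_0) + t)H(\kappa(\y_0))$ and $\y(t) = G_{t_\psi - t_+(\y_0)+t}(\kappa(\y_0))$, and using the identity $\Psi_s = G_{-s/(2H)}$ on orbits of constant $H$, I get
\[
\Psi_{r(t)}(\y(t)) = G_{-(t_\psi - t_+(\y_0)+t)}\bigl(G_{t_\psi - t_+(\y_0)+t}(\kappa(\y_0))\bigr) = \kappa(\y_0).
\]
Thus the composed relation is exactly $\{(\kappa(\y_0);\y_0)\,:\,\y_0\in\mathfrak{Y}\}$, the graph of $\kappa_{|\mathfrak{Y}}$, and the fiber over each such point is the interval of $t$-values, giving excess $1$.

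Finally I would verify the cleanness of the intersection. The fiber product is a smooth manifold of dimension $2\dim Y + 1$ parametrized by $(\y_0,t)$, projecting submersively to the graph of $\kappa$ via the constant-rank map $(\y_0,t)\mapsto(\kappa(\y_0);\y_0)$. To confirm cleanness in the sense of \cite[\S25.2]{GS} I would check that at each matched point $\x=(r(t),H(\y(t));\y(t))$ the tangent spaces $T_\x(\pi_2\Theta_+)$ and $T_\x(\pi_1\Sigma)$ intersect in a space of dimension equal to the fiber dimension $1$. Since $\pi_1\Sigma \subset \calC_+$ is the two-parameter family of outgoing orbits of $\Phi_t$ emanating from $\nu_-(\mathfrak{Y})$ and $\pi_2\Theta_+ \subset \calC_+$ is the full outgoing cone $\{\rho=H(\y)\}$ above the end, the intersection at $\x$ is tangent to the orbit of $\Phi_t$ through $\x$, which is one-dimensional. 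This tangential computation is the main obstacle, but it follows cleanly from the explicit flow formula (\ref{eq:PhiandPsi}) together with the fact that $\Sigma$ is defined by propagating a codimension-one transversal under $\Phi_t$. Applying the clean composition theorem then gives that the composite is a semiclassical FIO associated to the graph of $\kappa_{|\mathfrak{Y}}$, as claimed.
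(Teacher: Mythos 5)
Your strategy is the same as the paper's: apply the clean composition theorem \cite[Theorem 18.13.1]{GS} to $\Theta_+$ and $\Sigma$, identify the composed relation with the graph of $\kappa$ restricted to $\frakY$ via the identity $\nu_+(\kappa(\y))=\Phi_{t_+(\y)}(\nu_-(\y))$, and attribute the one-dimensional excess to the Hamilton flow direction. Your computation that $\Psi_{r(t)}(\y(t))=\kappa(\y_0)$ is correct and is exactly the paper's (\ref{eq:eqForKappa}) unwound in the $G_s$/$\Psi_s$ coordinates.

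However, the tangential step you single out as ``the main obstacle'' is misstated. Since by the choice of $\tpsi$ every point $\Phi_{\tpsi+t}(\nu_-(\y_0))$ with $t>0$ lies on an outgoing orbit in $\calC_+\cap T^*X_\infty$, the $T^*X$-projection of $\Sigma$ is an \emph{open} subset of $\calC_+$ (the flow-out of the codimension-one transversal $\nu_-(\frakY)$ over an open set of times; it has dimension $2n-1$ and is not a two-parameter family unless $n=2$), and it is contained in the $T^*\tilde X$-projection of $\Theta_+$, which is all of $\calC_+$ over $\calB$. Consequently, at a matched point $\x$ the two tangent spaces do not intersect in the one-dimensional span of $\Xi$: they intersect in all of $T_\x\calC_+$, of dimension $2n-1$. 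What is one-dimensional is the \emph{excess}, $(2n-1)-\bigl(\dim\Theta_+ +\dim\Sigma-\dim T^*\tilde X\bigr)=1$, equivalently the fiber of the projection from the fiber product onto the composed relation; that fiber direction is indeed $\bbR\Xi$. With this reading, cleanness of the middle-factor intersection is immediate (one submanifold is open in the other), and the substantive point becomes the one your sketch leaves implicit: reducing cleanness of the full intersection of $\Theta_+\times\Sigma$ with $T^*Y\times\Delta\times T^*Y$ to the middle-factor picture requires that $\Theta_+$ and $\Sigma$ be graphs over their middle projections, i.e. that a tangent vector $\delta\x$ to $\calC_+$ smoothly determine $\delta\w$ and $\delta\y$. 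This is precisely what the paper establishes using the splitting $T\calC_+=T\calT_+\oplus\bbR\Xi$, which yields $\delta\w=d\kappa(\delta\y)$ and hence the cleanness identity $T_\zeta(\mathfrak A\cap\mathfrak B)=T_\zeta\mathfrak A\cap T_\zeta\mathfrak B$. Your argument is repairable along these lines and then coincides with the paper's.
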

\begin{proof}  
	Introduce the manifolds:
	\[
	T^*X\buildrel \Delta \over \times T^*X := \left\{ (\x ; \x)\;|\; \x\in T^*X)
	\right\},
	\quad
	\mathfrak A = T^*Y\times \left(T^*X\buildrel \Delta \over \times T^*X\right)\times T^*Y,
	\]
	and $\mathfrak B = \Theta_+ \times \Sigma$.  Recall that $\Theta_+$ and $\Sigma$ are the canonical relations associated
	to the factors of the composition (\ref{eq:theCompo}), and note that $\mathfrak A$ and $\mathfrak B$
	are submanifolds of $T^*Y\times T^*X\times T^*X\times T^*Y$.

	We first prove that
		the manifolds $\mathfrak A$ and $\mathfrak B$ intersect cleanly.
	We claim that the intersection is the set
	\begin{equation}\label{eq:theIntersection}
		\mathfrak A\cap\mathfrak B = \left\{
		\left( \kappa(\y)\,;\, \Phi_{t_\psi +t}(\nu_-(\y))\,;\,\Phi_{t_\psi +t}(\nu_-(\y))\,;\, \y\right)\;|\; (\y, t) \in\calD 
		\right\},
	\end{equation}
	where $\kappa$ is the scattering map.  
	To see this, let $\zeta\in \mathfrak A \cap \mathfrak B$, and
	let us write 
	\[
	\zeta = (\w\,;\, \Phi_s(\nu_+(\w))\,;\, \Phi_{\tpsi +t}(\nu_-(\y))\,;\,\y),
	\]
	where $\Phi_s(\nu_+(\w))= \Phi_{\tpsi +t}(\nu_-(\y))$.   Therefore
	\[
	\nu_+(\w) = \Phi_{\tpsi +t-s}(\nu_-(\y)),
	\]
	which is a relation that characterizes the scattering map, namely
	\begin{equation}\label{eq:eqForKappa}
		\nu_+(\kappa(\y)) = \Phi_{t_+(\y)}(\nu_-(\y)),
	\end{equation}
	where $\y\to t_+(\y)$ is a smooth function by the implicit function theorem.
	Therefore $	\w=\kappa(\y)$,	which yields (\ref{eq:theIntersection}).
	We also obtain the relation $t_+(\y) = \tpsi +t-s$.
	
	The set $\mathfrak A\cap\mathfrak B$ is clearly a submanifold parametrized by 
	$(\y, t)\in\calD$, and elements in $T_\zeta(\mathfrak A\cap\mathfrak B)$ are of the form
	\begin{equation}\label{eq:tangentVectors}
		\left( d\kappa(\delta\y)\,;\,d\Phi_{\tpsi+t}\circ d\nu_-(\delta\y)+\delta t\Xi
		\,;\,d\Phi_{\tpsi+t}\circ d\nu_-(\delta\y)+\delta t\Xi\,;\,\delta\y
		\right)
	\end{equation}
where $\delta\y\in T_{\y}T^*Y$, $\delta t\in T_t\bbR \cong\bbR$,  
and $\Xi$ is the Hamilton field of $p$ (the generator of $\Phi$) evaluated at the appropriate point.
	
	To prove that the intersection is clean, we need to show that
	\[
	T_\zeta\left(\mathfrak A\cap\mathfrak B \right) = T_\zeta\mathfrak A\cap T_\zeta\mathfrak B.
	\]
	The inclusion $T_\zeta\left(\mathfrak A\cap\mathfrak B \right) \subset T_\zeta\mathfrak A\cap T_\zeta\mathfrak B$ is automatic, 
	so let $v\in T_\zeta\mathfrak A\cap T_\zeta\mathfrak B$.
	Since $v\in T_\zeta\mathfrak B$, it is of the form
	\[
	v = \big(\delta\w\,;\, d(\Phi_s)\circ d\nu_+(\delta\w) + \delta s\Xi\,;\, 
	d(\Phi_{\tpsi +t})\circ d\nu_-(\delta \y)+\delta t\Xi\,;\, \delta\y\big)
	\]
	where $\delta\w \in T_{\w}\calB$, $\delta s\in T_s\bbR\cong\bbR$, etc.,
	That $v\in T_\zeta\mathfrak A$ means that the middle entries in $v$ are equal, that is
	\begin{equation}\label{eq:dStuff}
		d(\Phi_s)\circ d\nu_+(\delta\w) + \delta s\Xi\ =
		d(\Phi_{\tpsi +t})\circ d\nu_-(\delta \y)+\delta t\Xi.
	\end{equation}
	Comparing with (\ref{eq:tangentVectors}), in order to 
	conclude that $v\in T_\zeta\left(\mathfrak A\cap\mathfrak B \right)$ all we need to show is that
	$\delta\w = d\kappa(\delta\y)$.  To see this, let us rewrite (\ref{eq:dStuff}) as
	\[
	d(\Phi_{\tpsi +t-s})\circ d\nu_-(\delta \y) = d\nu_+(\delta\w) +(\delta s-\delta t)\Xi.
	\]
	But by (\ref{eq:eqForKappa}), this also equals
	\[
	d(\Phi_{t_+(\y)})\circ d\nu_-(\delta \y) = d\nu_+(d\kappa(\delta\y)) - dt(\delta\y)\Xi.
	\]
	Now the summands on the right-hand sides of these expressions correspond to the direct sum decomposition 
	$T\calC_+=T\calT_+\oplus \bbR\Xi$.
	Therefore, corresponding summands must equal each other, that is
	\[
	d\nu_+(\delta\w) = d\nu_+(d\kappa(\delta\y))\quad\text{and}\quad (\delta s-\delta t)\Xi =-dt(\delta\y)\Xi.
	\]
	Since $d\nu_+$ is injective, the first of these relations yields $\delta\w = d\kappa(\delta\y)$, and the proof
	that $\mathfrak A$ and $\mathfrak B$ intersect cleanly	is complete.
	
	By the composition theorem for semi-classical FIOs \cite[Theorem 18.13.1]{GS}, the operator (\ref{eq:theCompo}) is a semi-classical FIO
	associated to the relation which is the image of $\mathfrak{A}\cap\mathfrak{B}$ under the projection onto $T^*Y\times T^*Y$.
	By (\ref{eq:theIntersection}), this is precisely the graph of $\kappa$ restricted to $\mathfrak{Y}$.
\end{proof}

\section{A microlocal approximation of  the Poisson operator and  the scattering matrix }
\label{s:gef}

  In this section we give a microlocal construction of $\pop \Oph(\psi)$, the Poisson operator composed with $\Oph(\psi)$.  
  Recall $\psi\in C_c^\infty(T^*Y)$ has support contained in the domain of the scattering map $\kappa$. 
A consequence of our construction is an expression for the scattering matrix in terms of $R_-$, $T_+$, and the Schr\"odinger propagator, see Proposition \ref{p:smident}.
Propositions \ref{p:mainML} and \ref{p:smident} combine to  prove our theorems.

Recall $\tpsi >0$ is chosen sufficiently large so that if $(y,\eta)\in \supp \psi$, then $\Phi_t ( 0, -\sqrt{1-|\eta|^2}; y,\eta)\in \{(x,\xi)\in T^*\Xinf: \; r\geq 0\}$ for all $t\geq \tpsi$.   Here we continue to use the notation for the cotangent variables on $T^*\Xinf$ introduced in Section \ref{s:mp}.
Choose $M\in \mathbbold{N}$ so that 
if $(y,\eta)\in \supp \psi$ 
and $-1/4\leq s\leq 0$ then 

	\begin{equation}\label{eq:condition}
	\forall \; t\in[0,\tpsi]\qquad
\Phi_{t}(s, -\sqrt{1-|\eta|^2};\Psi_s( y,\eta))\in \{(x,\xi)\in T^*\Xinf: \; r\leq M-2\}.
\end{equation}

In particular, this implies $M\geq 2$. Let $b_\psi<1$ be chosen so that if $(y,\eta)\in \supp (\psi)$ or $(r, \sqrt{1-|\eta|^2};y,\eta)=\Phi_{\tpsi}(r_-, -\sqrt{1-|\eta_-|^2}; y_-,\eta_-)$
for some $(y_-,\eta_-)\in \supp \psi$ and $-1/4\leq r_-\leq 0$, then $|\eta|\leq b_\psi$.  
 Choose $\psp\in C_c^\infty([0,1))$ so that $\psp$ is $1$ on $[0,b_{\psi}]$.  

 Let $\chi_j\in C^\infty(\R;[0,1]) $ satisfy $\chi_j(r)=1$ if $r<-2+j$ and 
$\chi_j(r)=0$ if $r>-3/2+j$, ensuring $\chi_j\chi_{j+1}=\chi_j$. 

Recall $R_-$ is defined in (\ref{rInverse}), and $\supp(R_-f_-)\subset \{(r,y)\in \Xinf \mid -1/4\leq r \leq 0\}$.  
Set 
\begin{align}
U_-& = (I-h^2\DY)^{1/2}\psp(h^2\DY)R_- \Oph(\psi):L^2(Y)\rightarrow H^\infty(\Xinf)\subset H^\infty(X), \nonumber \\
 U_+&= (\chi_{M}(r)-\chi_1(r))e^{-i\tpsi P/h}U_-:L^2(Y)\rightarrow L^2(\Xinf)\subset L^2(X)
 \end{align}
and 
\begin{multline}\label{eq:popapp} \popapp = 2ih\left( (1-\chi_0(r))
(h^2\DXt -1+i0)^{-1}U_-
+
\frac{1}{ih} \chi_M(r)\int_0^{\tpsi}e^{it/h}e^{-it\Ph/h} U_- dt  \right. \\ \left.
-e^{i\tpsi /h} (1-\chi_0(r))(h^2\DXt -1-i0)^{-1}\left(\psp(h^2\DY)U_+ \right)\right).
\end{multline}
We shall see that the operator $\popapp$ is an approximation of $\pop \Oph(\psi)$.
The mapping properties of $(h^2\DXt -1\pm i0)^{-1}$  ensure that if $f_-\in L^2(Y)$, then $\popapp  f_-\in \langle r \rangle ^{1/2+\delta}H^\infty(X)$ for any $\delta >0$.   Note
that our definition of $\popapp$ involves $\Oph(\psi)$, and so depends on choice of $\psi$, even though our notation does not indicate this.

We begin with a preliminary lemma.	
\begin{lemma}\label{l:uniform}
Let $\tilde{\chi}\in C_c^\infty(\Xinf)$  have support in the region with $r\geq M-2\geq 0$.  
Then $\| \tilde{\chi} e^{-it\Ph/h} U_- \|=O(h^\infty)$  uniformly for $t$ satisfying $0\leq t \leq  \tpsi$.  
\end{lemma}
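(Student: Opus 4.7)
The plan is to combine Corollary \ref{cor:FIOs2} with Egorov's theorem for the Schrödinger propagator and the pseudolocality of semiclassical Fourier integral operators.

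First, I will identify $U_-$ as a semiclassical FIO with compact microsupport. By Corollary \ref{cor:FIOs2}, $R_-\psp(h^2\DY)$ is an FIO associated with $\Theta_-^T$. Post-composition with the semiclassical PDO $(I-h^2\DY)^{1/2}\psp(h^2\DY)$---a genuine bounded PDO because $\psp\in C_c^\infty([0,1))$ cuts off the symbol of the square root away from its singularity---and pre-composition with $\Oph(\psi)$ yield a semiclassical FIO $U_-$ whose canonical relation $\Lambda_0$ is a compactly supported piece of $\Theta_-^T$. In particular, $\pi_X(\Lambda_0) \subset \{(r,y) : r \in \supp\chi\} \subset \{-1/4 \le r \le 0\}$.

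Next, Egorov's theorem (e.g. \cite[Theorem 11.5.1]{GS}) says that $e^{-it\Ph/h}$ is a semiclassical FIO associated with $\operatorname{graph}(\Phi_t)$, and since left-composition with an FIO associated to a canonical transformation is always transverse (hence clean), the product $e^{-it\Ph/h}U_-$ is a semiclassical FIO depending smoothly on $t$, with canonical relation $\Phi_t(\Lambda_0)$. Condition (\ref{eq:condition}), applied with $\y_0=(y,\eta)\in\supp\psi$ and $s=r\in\supp\chi$, is precisely the statement that $\pi_X(\Phi_t(\Lambda_0)) \subset \{r \le M-2\}$ for every $t\in[0,\tpsi]$. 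By continuity of the flow and compactness of $\supp\psi \times \supp\chi \times [0,\tpsi]$, together with the freedom to enlarge $M$ slightly (permitted since the condition only requires $M$ sufficiently large), I may arrange
\[
\bigcup_{t\in[0,\tpsi]}\pi_X\bigl(\Phi_t(\Lambda_0)\bigr) \subset \{r \le M-2-\varepsilon\}
\]
for some $\varepsilon>0$.

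Finally, $\tilde\chi$ is supported in $\{r \ge M-2\}$, so its support is uniformly (in $t\in[0,\tpsi]$) disjoint from the $X$-projection of the canonical relation of $e^{-it\Ph/h}U_-$. The standard pseudolocality property of semiclassical FIOs---multiplication by a cutoff whose support misses the spatial projection of the canonical relation yields an operator of norm $O(h^\infty)$, via non-stationary phase applied to the oscillatory-integral representation of the kernel---then gives $\|\tilde\chi\, e^{-it\Ph/h}\, U_-\| = O(h^\infty)$. The hard part will be the uniformity in $t\in[0,\tpsi]$, which reduces to checking that the phase and amplitude of the oscillatory-integral representation depend smoothly on $t$, so that the non-stationary-phase bounds can be taken uniform on the compact time interval.
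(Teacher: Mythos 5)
Your proposal is correct and follows essentially the same route as the paper: identify $e^{-itP/h}U_-$ as a semiclassical FIO whose canonical relation, by the choice of $M$ in (\ref{eq:condition}), projects into $\{r\le M-2\}$, and then kill $\tilde\chi\,e^{-itP/h}U_-$ by non-stationary phase, with uniformity in $t$ coming from smooth dependence of the oscillatory-integral data on $t\in[0,\tpsi]$. The only point the paper makes more carefully is that one must first insert an energy cutoff $\chi^\sharp(h^2\DX)$ (harmless since the microsupport of $U_-$ lies in $p^{-1}(1)$) before invoking the uniform oscillatory-integral representation of the propagator; your appeal to the compact microsupport of $U_-$ is the same idea stated less explicitly.
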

\begin{proof}
First observe that, by (\ref{rInverse}),
\[
\forall \;g\in L^2(Y)\qquad U_{-}g= \chi(r )(I-h^2\DY)^{1/2}\psp(h^2\DY)e^{- ir(I-h^2\DY)^{1/2}_+/h}g,
\]
where $\chi$ is supported in $(-1/4,0)$.
	
For each value of $h$ and each $t$, the operator $e^{-it\Ph/h} U_-$ 
is a smoothing operator of finite rank.  By 
Corollary \ref{cor:FIOs} and the composition theorem for FIOs, it is also
a semi-classical FIO whose canonical relation is 
	\begin{equation}\label{eq:relationLemma6.1}
\left\{
	\big(\Phi_t(r,\rho;  \w)\,,\, \y\big)|\; \w = \Psi_{r}(\y),\, \y\in\mbox{supp}(\psi),\, \rho  = - H(\w),\, r\in \mbox{supp}(\chi)
	\right\}.
\end{equation}

More precisely, let $\chi^\sharp\in C_c^\infty(\bbR)$ be identically equal to one in a neighborhood of $1\in\bbR$, 
and note that
\[
\chi^\sharp(h^2\DX)U_- = U_- + O(h^\infty)
\]
because the image of the canonical relation of $U_-$ (which is the same as that of $R_-$) is contained in $p^{-1}(1)$.  
We now use the well-known approximation of $e^{-itP/h}\chi^\sharp(h^2\DX)$ by 
oscillatory integrals, uniformly for $t$ in a bounded interval, see e.g. \cite[Theorem IV-30]{Rob} or \cite[Lemma 3.2]{ABR}.
That is, one can write
$e^{-itP/h}\chi^\sharp(h^2\DX) = \calF_1 +\calR_1,$
where the Schwartz kernel of $\calF_1$ is a finite sum of oscillatory integrals of the form 
$\int e^{ih^{-1}\varpi(t,x,x',p)} a(t,x,x',p,h) dp$
where $\varpi$ are generating functions for portions of the canonical relation of the graph of
the Hamilton flow of $p$, the amplitudes
$a$ are smooth and have an asymptotic expansion in powers of $h$, and the Schwartz kernel of
$\calR_1$ is $O(h^\infty)$ uniformly for $t$ in a bounded interval.  
Similarly, one can write $U_- = \calF_2 + \calR_2$,
where the Schwartz kernel of $\calF_2$ is a finite sum of oscillatory integrals of the form 
$\int e^{ih^{-1}\varpi(y,x,p)} a(y,x,p,h) dp$
where $\varpi$ are generating functions for the canonical relation of $R_-$, and the Schwartz kernel of
$\calR_2$ is $O(h^\infty)$.  
It follows that
\[
e^{-itP/h}\chi^\sharp(h^2\DX)U_- = \calF_1\calF_2 + \calS,\qquad \calS = \calF_1\calR_2+\calR_1\calF_2+\calR_1\calR_2.
\]
Note that the Schwartz kernel of $\calS$ is $O(h^\infty)$ uniformly for $t$ in a compact interval.

Now recall how 
$M\in \mathbbold{N}$ is chosen, (\ref{eq:condition}), and 
also recall that $\text{supp}(\chi)\subset (-1/4,0)$.
It follows that the Schwartz kernel of 
$\tilde{\chi}\calF_1\calF_2$ is a finite sum of oscillatory integrals whose phase functions do not
have critical points in the support of their amplitudes.
Therefore the Hilbert-Schmidt norm of $\tilde{\chi}\calF_1\calF_2$
can be estimated as $h\to 0$ by a finite sum of absolute values of  oscillatory 
integrals without critical points.  By smoothness of the integrands, the estimate is 
uniform in $t\in [0, t_\psi]$.

In combination with the rapid decrease of the Schwartz kernel of $\calS$, 
we can conclude that the Hilbert-Schmidt norm of $\tilde{\chi}e^{-itP/h}U_-$
is $O(h^\infty)$ uniformly in $t\in [0, t_\psi]$.
\end{proof}

\begin{lemma}\label{l:Euhm} Set $(\Ph-1)\popapp=2ih E$.  Then for any $f_-\in L^2(Y)$, 
$E f_-$ is compactly supported with support 
in $X_c\cup \{ (r,y)\in \Xinf: r\leq M-1 \}$, and $\| E \|_{L^2(Y)\rightarrow H^2(X)}=O(h^\infty)$.
\end{lemma}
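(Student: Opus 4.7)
The plan is to compute $(P-1)\popapp$ term by term using (\ref{eq:popapp}), exhibit the main cancellations, and show that what remains is a sum of four residuals, each $O(h^\infty)$ in the operator norm $L^2(Y)\to H^2(X)$ and compactly supported in the asserted region. On $\supp(1-\chi_0)\subset\Xinf$ the potential vanishes, so $P=h^2\DXt$ there and for any $F$
\[
(P-1)(1-\chi_0)(h^2\DXt-1\pm i0)^{-1}F \;=\; (1-\chi_0)F - [P,\chi_0](h^2\DXt-1\pm i0)^{-1}F.
\]
For the middle summand I use the Duhamel identity $e^{it/h}(P-1)e^{-itP/h}=ih\,\partial_t(e^{it/h}e^{-itP/h})$, which integrates to $\int_0^{\tpsi} e^{it/h}(P-1)e^{-itP/h}U_-\,dt = ih(e^{i\tpsi/h}e^{-i\tpsi P/h}U_- - U_-)$.

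Next I gather the main terms. Since $\supp U_-\subset(-1/4,0)$, both $(1-\chi_0)U_-$ and $\chi_M U_-$ equal $U_-$, and the $U_-$ contributions cancel. Using $U_+=(\chi_M-\chi_1)e^{-i\tpsi P/h}U_-$ together with $\chi_1\chi_M=\chi_1$, I rewrite $\chi_M e^{-i\tpsi P/h}U_-=U_++\chi_1 e^{-i\tpsi P/h}U_-$. The disjointness of the $r$-supports of $\chi_0$ and $\chi_M-\chi_1$ yields $\chi_0 U_+\equiv 0$, while the $h^2\DY$-spectrum of $U_+$ is microlocally contained in $[0,b_\psi^2]$, where $\psp\equiv 1$, so $\psp(h^2\DY)U_+=U_++O(h^\infty)$ in every Sobolev norm. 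Thus the ``$U_+$-terms'' collapse modulo $O(h^\infty)$, leaving $E=E_1+E_2+E_3+E_4+O(h^\infty)$ with
\[
\begin{aligned}
E_1 &= -[P,\chi_0](h^2\DXt-1+i0)^{-1}U_-, & E_2 &= \chi_1\, e^{i\tpsi/h}e^{-i\tpsi P/h}U_-, \\
E_3 &= \tfrac{1}{ih}[P,\chi_M]\!\int_0^{\tpsi}\! e^{it/h}e^{-itP/h}U_-\,dt, & E_4 &= e^{i\tpsi/h}[P,\chi_0](h^2\DXt-1-i0)^{-1}\psp(h^2\DY)U_+.
\end{aligned}
\]
The coefficient supports $(-2,-3/2)$ of $[P,\chi_0]$, $\{r<-1/2\}$ of $\chi_1$, and $(M-2,M-3/2)$ of $[P,\chi_M]$ place $\supp E$ inside $X_c\cup\{(r,y)\in\Xinf:r\leq M-1\}$.

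Finally I estimate each residual. For $E_1$, decompose $U_-$ in the $\DY$-eigenbasis; because $\supp\psp$ lies strictly inside $[0,1)$, the surviving modes satisfy $\tau_j=\sqrt{1-h^2\sigma_j^2}\geq\sqrt{\delta_0}$ for some $\delta_0>0$, and Lemma \ref{l:1d} applied on $\supp[P,\chi_0]$ (strictly below $\supp U_-$) produces an explicit expression carrying the Schwartz factor $\hat\chi(2\tau_j/h)=O(h^\infty)$ uniformly in $j$; the first-order commutator $[P,\chi_0]$ preserves this bound in any Sobolev norm. For $E_4$ the same mechanism reduces matters to $T_-\psp(h^2\DY)U_+=O(h^\infty)$: by Corollary \ref{cor:FIOs}, $T_-\psp(h^2\DY)$ is an FIO selecting $r$-momentum $-H(\y)$, while $U_+$ is microlocally supported where the $r$-momentum equals $+H(\y')>0$, so the composition is empty modulo $O(h^\infty)$. $E_2$ follows from the argument of Lemma \ref{l:uniform} with $\tilde\chi$ replaced by $\chi_1$: the defining property of $\tpsi$ carries the Hamilton flow at time $\tpsi$ into $\{r\geq 0\}$, disjoint from $\supp\chi_1\subset\{r<-1/2\}$. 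Finally, $E_3$ is an immediate consequence of Lemma \ref{l:uniform} applied to $[P,\chi_M]e^{-itP/h}U_-$ uniformly in $t\in[0,\tpsi]$, followed by integration and the $1/(ih)$ factor. The main obstacle is upgrading these microlocal non-stationary-phase estimates to uniform $L^2(Y)\to H^2(X)$ operator-norm bounds: for $E_1$ this is immediate from Lemma \ref{l:1d}, but for $E_2,E_3,E_4$ one must either adapt the oscillatory-integral construction of Lemma \ref{l:uniform} to accommodate the first-order commutators $[P,\chi_0],[P,\chi_M]$ and the pseudodifferential factor $\psp(h^2\DY)$, or invoke the composition theorem for semi-classical FIOs while tracking amplitudes to all orders in $h$.
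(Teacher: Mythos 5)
Your proposal is correct and follows essentially the same route as the paper: the same Duhamel identity and the same cancellations via $(1-\chi_0)U_-=U_-=\chi_M U_-$ produce the same four residual terms (your $E_2$ is just the paper's third term split as $e^{i\tpsi/h}(1-\psp(h^2\DY))U_+ + e^{i\tpsi/h}\chi_1 e^{-i\tpsi P/h}U_-$), and each is killed by the same mechanisms — Lemma \ref{l:uniform} for the propagator terms and emptiness of the composed canonical relations (equivalently, non-stationary phase) for the resolvent terms. The only cosmetic difference is that for $E_1$ you compute mode-by-mode via Lemma \ref{l:1d}, obtaining the factor $\hat\chi(2\tau_j/h)=O(h^\infty)$ directly, whereas the paper reduces $E_1$ to the $E_4$ argument by conjugating the incoming resolvent; both rest on the same disjointness of microsupports.
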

\begin{proof}
Using that
$$(\Ph-1)\int_0^{\tpsi}e^{it/h}e^{-it\Ph/h}  dt
= ih\left(e^{i\tpsi /h}e^{-i\tpsi \Ph/h} - I\right)
$$
and $ (1-\chi_0)U_-=U_-=\chi_M U_-$
gives
$E= \sum_{j=1}^4E_j,$
where 
\begin{align*}
E_{1}& =[h^2\partial_r^2,\chi_0] (h^2\DXt -1+i0)^{-1} U_- \\
E_{2}& = -\frac{1}{ih} [h^2\partial^2_r,\chi_M]\int_0^{\tpsi }e^{it/h}e^{-it\Ph/h} U_- dt\\
E_{3}& = e^{i\tpsi /h}\left(  \chi_M e^{-i\tpsi \Ph/h} U_-  -\psp(h^2\DY)U_+\right) \\
E_{4} & = -e^{i\tpsi /h}[h^2\partial_r^2,\chi_0](h^2\DXt -1-i0)^{-1}
\psp(h^2\DY)U_+.
\end{align*}
The claim about the support of $E$ is immediate from our expression for
$E$.  

We begin with bounding $E_2$.   
Since $[h^2\partial^2_r,\chi_M]$ is supported in $\{x\in \Xinf \mid  x=(r,y), M-2\leq r \leq -3/2+M\}$, as a corollary of Lemma \ref{l:uniform} we 
obtain that $\| E_2\|=O(h^\infty)$.

For $E_3$, we use 
\begin{multline}
\|E_3\| = \|\chi_M e^{-i\tpsi \Ph/h} U_-  -\psp(h^2\DY)(\chi_M-\chi_0)e^{-i\tpsi \Ph/h} U_-\|
\\
\leq \|(1-\psp(h^2\DY))(\chi_M-\chi_0)e^{-i\tpsi \Ph/h} U_- \| + \| \chi_0 e^{-i\tpsi \Ph/h} U_- \|.
\end{multline}
That $\| (1-\psp(h^2\DY))(\chi_M-\chi_0)e^{-i\tpsi \Ph/h} U_- \|=O(h^\infty)$ follows from Proposition \ref{prop:FIOI} and our choice of $\psp$, and that $\| \chi_0 e^{-i\tpsi \Ph/h} U_- \|=O(h^\infty)$ follows
from Proposition \ref{prop:FIOI}, the support properties of $\chi_0$, and our choice of $t_\psi$.

Now consider $E_4$.  The support properties of $[h^2 \partial^2_r,\chi_0]$ and $(\chi_M-\chi_1)$ mean that by Lemma \ref{l:prodresolve}
$$\| E_4\|= (2h)^{-1}\| (h^2 \chi_0'' (1-h^2\DY)^{-1/2}- 2i\chi_0'  )\psp(h^2 \DY) T_ - U_+\|.$$
But by Corollary \ref{cor:FIOs}
 and Proposition \ref{prop:FIOI},  the composition of the canonical relations of $T_-$ and $U_+$ is empty.  Therefore
$\| T_- U_+\|_{L^2(Y)\rightarrow H^m(Y)}=O(h^{\infty})$, and hence $\| E_4\|=O(h^{\infty})$.

The term $E_1$ is handled in a way similar to $E_4$, using that 
 $$(h^2\DXt -1+i0)^{-1}f = \overline{(h^2 \DXt-1-i0)^{-1}\overline{f}}.$$
\end{proof}

For the next lemma, we continue to use the functions $\chi_j$ introduced above.
\begin{lemma}\label{l:endbds}
Suppose $f\in L^2_c(X)$ has support in $X_C\cup\{(r,y)\in \Xinf \mid -4<r\leq M'\}$, with $-4<M'<\infty$.
Then for $r\geq M'$,
\begin{multline}\label{eq:morerestrict}
\| \left( \mathbbold{1}_{[0,1-\epsilon]}(h^2\DY)(1-\chi_0)(\Ph-1-i0)^{-1} f\right) (r, \bullet)\|_{L^2(Y)}\\
\leq 
\|\mathbbold{1}_{[0,1-\epsilon]}(h^2\DY)\mathbbold{1}_{[M',M'+1]}(r)(\Ph-1-i0)^{-1}\mathbbold{1}_{(-\infty,M']}(r)\|
\|f\|
\end{multline}
for any $\epsilon>0$.
Moreover, 
\begin{multline}\label{eq:lessrestrict}
\|\left( \mathbbold{1}_{[0,1)}(h^2\DY)(1-\chi_0)(\Ph-1-i0)^{-1} f\right) (r, \bullet)\|_{L^2(Y)} \\
\leq 
\|\mathbbold{1}_{[0,1)}(h^2\DY)\mathbbold{1}_{[M',M'+1]}(r)(\Ph-1-i0)^{-1}\mathbbold{1}_{(-\infty,M']}(r)\|
\|f\|.
\end{multline}
\end{lemma}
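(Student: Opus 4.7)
The plan is to exploit separation of variables on the cylindrical end $\Xinf$, where $P = -h^2\partial_r^2 + h^2\DY$. Since $f$ is supported in $X_C \cup \{r \leq M'\}$, the function $u := (\Ph-1-i0)^{-1}f$ satisfies $(P-1)u = 0$ on the region $\{r > M'\} \subset \Xinf$. Expanding $u$ in the orthonormal eigenbasis $\{\phi_j\}$ of $\DY$ from (\ref{eq:phis}) and selecting the outgoing (respectively exponentially decaying) mode at each frequency, exactly as in Lemma \ref{l:1d} and Lemma \ref{l:prodresolve}, we obtain
\[
u(r,y) = \sum_j c_j\, e^{i\tau_j r / h}\,\phi_j(y)\qquad\text{for } r \geq M',
\]
with scalars $c_j \in \bbC$ and $\tau_j = (1-h^2\sigma_j^2)^{1/2}$ taken with nonnegative real and imaginary parts, following the convention of Section \ref{s:Xtilops}.

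The crucial observation is that whenever $h^2\sigma_j^2 \leq 1-\epsilon$, $\tau_j$ is real, so $|e^{i\tau_j r/h}| = 1$; consequently, by Parseval on $Y$,
\[
\bigl\|\bigl(\mathbbold{1}_{[0,1-\epsilon]}(h^2\DY)\,u\bigr)(r,\bullet)\bigr\|_{L^2(Y)}^2 \,=\, \sum_{h^2\sigma_j^2 \leq 1-\epsilon}|c_j|^2
\]
is \emph{independent} of $r$ for $r \geq M'$. Since $1-\chi_0$ is a function of $r$ alone with values in $[0,1]$ and $\mathbbold{1}_{[0,1-\epsilon]}(h^2\DY)$ commutes with multiplication by functions of $r$, the left-hand side of (\ref{eq:morerestrict}) is pointwise bounded by this $r$-independent quantity.

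Rewriting the constant as an $r$-average over $[M', M'+1]$ then gives
\[
\bigl\|\bigl(\mathbbold{1}_{[0,1-\epsilon]}(h^2\DY)\,u\bigr)(r,\bullet)\bigr\|_{L^2(Y)}^2 \,=\, \bigl\|\mathbbold{1}_{[M',M'+1]}(r)\,\mathbbold{1}_{[0,1-\epsilon]}(h^2\DY)\,u\bigr\|_{L^2(X)}^2.
\]
Invoking the support hypothesis $f = \mathbbold{1}_{(-\infty,M']}(r)f$ and taking the square root then yields (\ref{eq:morerestrict}). The estimate (\ref{eq:lessrestrict}) follows identically: for indices with $h^2\sigma_j^2 < 1$ one still has $\tau_j > 0$ real, so $|e^{i\tau_j r/h}| = 1$ and the same $r$-independence holds for the $\mathbbold{1}_{[0,1)}(h^2\DY)$ projection.

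The only subtle point is justifying that the outgoing resolvent $(\Ph-1-i0)^{-1}f$ really admits the expansion displayed above with the outgoing/decay convention $\operatorname{Im}\tau_j \geq 0$; this comes from the limiting absorption principle together with separation of variables on $\Xinf$, exactly in the form already used when constructing the Poisson operator in Section \ref{s:Posm} and in Lemma \ref{l:prodresolve}. Once this expansion is in hand, the rest is immediate from unitarity of the Fourier expansion in $y$ and the elementary fact that real exponentials have modulus one.
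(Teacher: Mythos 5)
Your proof is correct and follows essentially the same route as the paper's: expand $(\Ph-1-i0)^{-1}f$ in the eigenbasis of $\DY$ on $\{r>M'\}$, use that $|e^{i\tau_j r/h}|=1$ for the modes with $h^2\sigma_j^2\leq 1-\epsilon$ (resp.\ $<1$) so the sliced $L^2(Y)$ norm is independent of $r$, and identify that constant with the integral over the slab $[M',M'+1]$ to bring in the operator norm. The paper's proof is exactly this computation, so there is nothing to add.
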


Although these two are almost the 
same, and have essentially identical proofs, the 
operator norm on the right side of  (\ref{eq:morerestrict}) may be smaller
than that in (\ref{eq:lessrestrict}); see, for example, Section \ref{ss:bulge}.  
\begin{proof}
From the support properties of $f$,  there are  $c_j=c_j(h,f)\in \C$ so that for $r>M'$
$$(\mathbbold{1}_{[0,1-\epsilon]}(h^2\DY)(1-\chi_0)(\Ph-1-i0)^{-1} f )(r, y)=\sum_{h^2\sigma_j^2\leq 1-\epsilon} c_j e^{i\tau_j r/h} \phi_j(y)$$
where $\tau_j=(1-\sigma^2_jh^2)^{1/2}\geq 0$ for $h^2\sigma_j^2\leq 1$.
Then for such $r$
\begin{align*} & 
\|\left( \mathbbold{1}_{[0,1-\epsilon]}(h^2\DY)(1-\chi_0)(\Ph-1-i0)^{-1} f\right) (r, \bullet)\|_{L^2(Y)} ^2\\& = \sum_{h^2\sigma_j^2\leq 1-\epsilon}|c_j|^2\\ &
= \int_{\Xinf \; |\: M'\leq r \leq M'+1}|  \mathbbold{1}_{[0,1-\epsilon]}(h^2\DY)(1-\chi_0)(\Ph-1-i0)^{-1} f|^2
\\ & =  \|\mathbbold{1}_{[0,1-\epsilon]} \mathbbold{1}_{[M',M'+1]}(r)( \Ph-1-i0)^{-1}\mathbbold{1}_{(-\infty,M']}(r)f\|^2
\end{align*}
proving (\ref{eq:morerestrict}).
The proof of (\ref{eq:lessrestrict}) is essentially identical.
\end{proof}




Recall that $\popapp$ as defined in (\ref{eq:popapp}) depends on  $\psi \in C^\infty(T^*Y)$.
Set 
\begin{equation}\label{eq:qdef}
Q= \popapp - (P-1-i0)^{-1} (P-1)\popapp.
\end{equation}
Then $(P-1)Q =0$, and for any $f_-\in L^2(Y)$, $\delta >0$, $\langle r \rangle ^{-1/2-\delta}Qf_-\in L^2(X)$.   We shall show that $Q$ is actually $\pop {\psp}(h^2\DY) \Oph(\psi)$, and that under the 
hypotheses of Theorem \ref{thm:v1} or \ref{thm:v2} we can use this to find an expression for the cut-off scattering matrix, up to a small error.

\begin{proposition}\label{p:smident}
If $1\not \in \spec(h^2\DY)$, then the operator $Q$ defined in (\ref{eq:qdef}) is $Q=\pop \psp(h^2\DY)\Oph(\psi)$.  Moreover, if (\ref{eq:rbdhyp}) holds, then
\begin{multline} \label{eq:smfirst}
S \Oph(\psi)\\= e^{i\tpsi/h}(I-h^2\DY)^{-1/2} \psp(h^2\DY)T_+ (\chi_M-\chi_1) e^{-i\tpsi P/h} (I-h^2\DY)^{1/2} R_-\psp(h^2\DY)\Oph(\psi)+O(h^\infty),
\end{multline}
and if the hypotheses of Theorem \ref{thm:v2}  hold, 
\begin{multline} \label{eq:smsecond}
\tS \mathbbold{1}_{[0,1-\epsilon]}(h^2\DY)\Oph(\psi)\\
=  e^{i\tpsi/h}(I-h^2\DY)^{-1/2} \mathbbold{1}_{[0,1-\epsilon]}(h^2\DY)T_+ (\chi_M-\chi_1) e^{-i\tpsi P/h} (I-h^2\DY)^{1/2}R_- \psp(h^2\DY)\Oph(\psi)+O(h^\infty)   
\end{multline}
for any $\epsilon>0$.
If $1\in \spec (h^2 \DY)$ the equations (\ref{eq:smfirst}) and (\ref{eq:smsecond}) hold as limits as $h' \uparrow h$.
\end{proposition}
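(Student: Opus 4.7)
The plan is to identify $Q$ with $\pop\psp(h^2\DY)\Oph(\psi)$ via the uniqueness characterization of the Poisson operator from Section \ref{s:Posm}, and then read off the scattering matrix by extracting the outgoing data of $Q$ on the cylindrical end, using the resolvent hypotheses to show that the contribution of the correction $(P-1-i0)^{-1}(P-1)\popapp$ to this outgoing data is $O(h^\infty)$ after the appropriate spectral projection.

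For the identification, I verify the defining properties of the Poisson operator. The relation $(P-1)Q=0$ and the membership $Q\in\langle r\rangle^{1/2+\delta}H^2(X)$ are immediate from (\ref{eq:qdef}) together with Lemma \ref{l:Euhm} and the mapping properties of the free and full resolvents. To compute the incoming data, I examine $\popapp f$ summand by summand on $\{r\geq M\}$. The time-integral summand vanishes there due to the $\chi_M$ cut-off; the third summand is purely outgoing by Lemma \ref{l:prodresolve} with the upper sign; and $(P-1-i0)^{-1}\cdot 2ihE$, applied to the compactly supported source $E$, is likewise purely outgoing. The first summand, evaluated via Lemma \ref{l:prodresolve} with the lower sign together with the identity $T_-R_-=I$ (which follows directly from $\int\chi=1$), contributes incoming data exactly equal to $\psp(h^2\DY)\Oph(\psi)f$. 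Orthogonality of $Q$ to the $1$-eigenspace of $P$ is handled using the implicit projection $(I-\mathcal{PR}_1)$ attached to $(P-1-i0)^{-1}$ (as discussed in Section \ref{s:Posm}); the remaining ambiguity modulo eigenfunctions does not affect the $\mathbbold{1}_{[0,1]}(h^2\DY)$-projected outgoing data used to define $S$. Uniqueness then yields $Q=\pop\psp(h^2\DY)\Oph(\psi)$, and since $\psp(h^2\DY)\Oph(\psi)=\Oph(\psi)+O(h^\infty)$ by the choice of $b_\psi$, also $Q=\pop\Oph(\psi)+O(h^\infty)$.

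To prove (\ref{eq:smfirst}), I extract the outgoing data $f_+$ of $Q$. Applying Lemma \ref{l:prodresolve} with the upper sign to the third summand of $\popapp$, for $r$ past the support of $U_+$, yields outgoing data $e^{i\tpsi/h}(I-h^2\DY)^{-1/2}\psp(h^2\DY)T_+\psp(h^2\DY)U_+$. Substituting $U_+=(\chi_M-\chi_1)e^{-i\tpsi P/h}U_-$ and $U_-=(I-h^2\DY)^{1/2}\psp(h^2\DY)R_-\Oph(\psi)$, using that $R_-$ commutes with $\psp(h^2\DY)$, and replacing $\psp(h^2\DY)e^{-i\tpsi P/h}U_-$ by $e^{-i\tpsi P/h}U_-$ modulo $O(h^\infty)$ (justified by Proposition \ref{prop:FIOI} and the choice of $b_\psi$, which ensures the propagated microlocal support lies in $\{|\eta|\leq b_\psi\}$ where $\psp\equiv 1$), I recover the right-hand side of (\ref{eq:smfirst}) up to the contribution from the correction $-2ih(P-1-i0)^{-1}E$. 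To show this contribution is $O(h^\infty)$, I apply Lemma \ref{l:endbds} to bound its $\mathbbold{1}_{[0,1]}(h^2\DY)$-projected outgoing data at fixed $r$ in $L^2(Y)$ by the product of a cut-off resolvent norm with $\|E\|$; Lemmas \ref{l:reX1}-\ref{l:reX2} translate the spatial cut-offs into the form in (\ref{eq:rbdhyp}), yielding polynomial growth in $h^{-1}$, while Lemma \ref{l:Euhm} gives $\|E\|=O(h^\infty)$.

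The identity (\ref{eq:smsecond}) is proved by the same scheme with $\mathbbold{1}_{[0,1-\epsilon]}(h^2\DY)$ pre-multiplied throughout: under the Theorem \ref{thm:v2} hypothesis $[P,\Delta_{Y_0}]=0$, this spectral projector commutes with every operator appearing in $\popapp$, so the manipulations go through verbatim and the correction estimate uses the weaker bound (\ref{eq:rbdhyp2}) together with the $\mathbbold{1}_{[0,1-\epsilon]}$-version of Lemma \ref{l:endbds}. Finally, when $1\in\spec(h_0^2\DY)$, the identities at $h_0$ are obtained by taking $h'\uparrow h_0$, invoking Lemma \ref{l:tSwelldefined} for continuity of $S$ on the left-hand side and the manifest continuity in $h$ of the right-hand sides. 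The principal technical obstacle is controlling the outgoing contribution of $(P-1-i0)^{-1}E$: converting the $L^2(X)$-level resolvent estimate into an $L^2(Y)$-level bound on outgoing data at fixed $r$ requires the careful combination of Lemma \ref{l:endbds} with Lemmas \ref{l:reX1}-\ref{l:reX2} to reposition the spatial cut-offs into the form supplied by the hypothesis, and the bookkeeping of where the spectral projectors $\psp(h^2\DY)$ and $\mathbbold{1}_{[0,1]}(h^2\DY)$ (or $\mathbbold{1}_{[0,1-\epsilon]}(h^2\DY)$) are to be inserted or exchanged must be done with attention to the commutation relations and to the microlocal support encoded in the choice of $b_\psi$ and $\tpsi$.
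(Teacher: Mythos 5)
Your proposal follows the paper's own argument essentially step for step: the same summand-by-summand evaluation of $\popapp f$ on $\{r>M\}$ via Lemma \ref{l:prodresolve} and $T_-R_-=I$ to read off the incoming and outgoing data, the same use of Lemmas \ref{l:Euhm}, \ref{l:endbds}, \ref{l:reX1} and \ref{l:reX2} together with the resolvent hypothesis to kill the projected outgoing contribution of the correction term, and the same limiting argument when $1\in\spec(h^2\DY)$. (Your intermediate step of ``replacing $\psp(h^2\DY)e^{-i\tpsi P/h}U_-$ by $e^{-i\tpsi P/h}U_-$'' is unnecessary: since $T_+$, $R_-$ and the cutoffs in $r$ all commute with functions of $h^2\DY$, the stated right-hand side of (\ref{eq:smfirst}) is literally $e^{i\tpsi/h}(I-h^2\DY)^{-1/2}T_+\psp(h^2\DY)U_+$ after commuting the projectors, with no microlocal argument needed at that point.)

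There is, however, one genuine omission. The construction delivers an identity for $S\,\psp(h^2\DY)\Oph(\psi)$ (equivalently $Q=\pop\,\psp(h^2\DY)\Oph(\psi)$), while (\ref{eq:smfirst}) asserts one for $S\Oph(\psi)$. Your bridge --- ``since $\psp(h^2\DY)\Oph(\psi)=\Oph(\psi)+O(h^\infty)$, also $Q=\pop\Oph(\psi)+O(h^\infty)$'' --- does not follow for free: composing an $O(h^\infty)$ operator with $\pop$ or with $S$ yields $O(h^\infty)$ only if $\|S\|$ is at most polynomially large in $h^{-1}$, and a priori $\|S\|$ can degenerate as $1/h^2$ approaches $\spec(\DY)$ because of the factors $(I-h^2\DY)_+^{\mp 1/4}$ relating $S$ to the unitary $S_U$. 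This is precisely what Lemma \ref{l:Sest} is for: combined with (\ref{eq:rbdhyp}) it gives $\|S\|=O(h^{1-\max(2,N_0)})$, so that $\|S(I-\psp(h^2\DY))\Oph(\psi)\|=O(h^\infty)$; under the hypotheses of Theorem \ref{thm:v2} one instead uses that $S$ commutes with $\Delta_{Y_0}$, whence $\|S\|=1$. With that bound cited, your argument closes and coincides with the paper's proof.
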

\begin{proof}
As noted already, $(\Ph-1)Q=0$.  Thus to show that $Q=\pop \Oph(\psi)$ it remains to study the expansion of $Q f_-$ on $\Xinf$ for
$f_-\in L^2(X)$.  We begin by studying the behavior of  $\popapp f_-$ for $r>M$.  By Lemma \ref{l:prodresolve}, 
\begin{align}& (1-\chi_0(r))(h^2\DXt -1+i0)^{-1} U_- f_- \rest_{r>M}\nonumber \\
& =\frac{-i}{2h} e^{-ir\sro/h}(I-h^2\DY)^{-1/2}T_-  \psp(h^2\DY)\sro R_- \Oph(\psi)f_- \rest_{r>M}\nonumber \\
&= \frac{-i}{2h} e^{-ir\sro/h}\psp(h^2\DY)\Oph(\psi) f_-\rest_{r>M}
\end{align}
using that $T_\pm$ commutes with functions of $h^2\DY$ and $T_\pm R_\pm=I$.  Also by Lemma \ref{l:prodresolve}, 
\begin{multline}
(1-\chi_0(r))(h^2\DXt -1-i0)^{-1}\left(\psp(h^2\DY)U_+ f_-\right)\rest_{r>M}\\=
\frac{i}{2h}e^{ir\sro/h}(1-h^2\DY)^{-1/2} T_+\left( \psp(h^2\DY)U_+ f_-\right)\rest_{r>M}.
\end{multline}
The term $\chi_M (r)\int_0^{\tpsi }e^{it/h}e^{-it\Ph/h} U_- dt$ in (\ref{eq:popapp})
vanishes if $r>M$. 

If $1\not \in \spec(h^2\DY)$,
\begin{equation}\label{eq:rescont}
\left( (1-\chi_0)(\Ph-1-i0)^{-1}E f_- \right)(r,y)\rest_{r>M}=\sum _{0\leq h^2\sigma_j^2}c_j e^{i\tau_j r/h}\phi_j(y)
\end{equation}
for some  $c_j\in \C$.   Here $\tau_j=(1-h^2\sigma_j^2)^{1/2}$ has $\Re \tau_j\geq 0$, $\Im \tau_j\geq 0$.
Combining these four observations, we see that if $1\not \in \spec(h^2\DY)$,
\begin{align}\label{eq:Qfexp}
(Qf_-)\rest_{r>M} =&\, e^{-ir\sro/h}\psp(h^2\DY)\Oph(\psi)f_-  \nonumber \\ & + e^{ir\sro/h} e^{i\tpsi/h}(I-h^2\DY)^{-1/2}T_+\left( \psp(h^2\DY)U_+ f_-\right)\nonumber \\
& -2ih \sum _{0\leq h^2\sigma_j^2}c_j e^{i\tau_j r/h}\phi_j(y).
\end{align}
This shows that $Q= \pop \psp(h^2\DY)\Oph(\psi)$.

We now turn to proving (\ref{eq:smfirst}), so  suppose (\ref{eq:rbdhyp}) holds.
Then using in addition Lemmas \ref{l:reX1} and \ref{l:reX2}, for any $M'>0$ there is a constant $C$ so that 
$$\| \mathbbold{1}_{[M', M'+1]}(r)(\Ph-1-i0)^{-1}\mathbbold{1}_{(-\infty,M']}(r)\|\leq  C(h^{-2} + h^{-N_0})\; \text{for}\; 0<h\leq h_0.$$
Then  by (\ref{eq:rescont}) and Lemmas \ref{l:Euhm} and \ref{l:endbds}, 
$ \sum _{0\leq h^2\sigma_j^2\leq 1}|c_j|^2=O(h^\infty\|f_-\|)$ where the $c_j$ are defined via (\ref{eq:Qfexp}).  Thus by our definition of the scattering matrix and (\ref{eq:Qfexp})
$$S\psp(h^2\DY)\Oph(\psi) f_-= e^{i\tpsi/h}(1-h^2\DY)^{-1/2}T_+\left( \psp(h^2\DY)U_+ f_-\right) +O(h^\infty \|f_-\|).$$
Using $\|(1-\psp(h^2\DY))\Oph(\psi) \|=O(h^\infty)$  and the fact that Lemma \ref{l:Sest}  and (\ref{eq:rbdhyp}) imply $\|\tS\|=O(h^{1-\max(2,N_0)})$ finishes the proof when (\ref{eq:rbdhyp}) holds,
if $1\not \in \spec(h^2\DY)$.  If $1\in \spec(h^2\DY)$, then the equality holds by taking the limits as $h'\uparrow h$.

Now suppose the hypotheses of Theorem \ref{thm:v2} hold.  Since $P$ commutes with $\Delta_{Y_0}$, the scattering matrix $S$ commutes with $\Delta_{Y_0}$
and, as mentioned earlier, this implies $\|\tS\|=1.$   Thus 
$$\| \tS (1-\psp(h^2 \DY))\Oph(\psi)\|= \| (1-\psp(h^2 \DY))\Oph(\psi)\| =O(h^\infty).$$  Applying Lemmas \ref{l:reX1}, \ref{l:reX2}, \ref{l:Euhm},  and \ref{l:endbds} 
as before gives
$ \sum _{0\leq  h^2\sigma_j^2\leq 1-\epsilon }|c_j|^2=O(h^\infty)$, where the $c_j$ are as in (\ref{eq:Qfexp}).
Thus (\ref{eq:smsecond}) holds.
\end{proof}

\vspace{2mm}
\noindent{\em Proof of Theorems \ref{thm:v1} and \ref{thm:v2}.} 
Combining Propositions \ref{p:mainML} and  \ref{p:smident} proves Theorems \ref{thm:v1} and \ref{thm:v2} for $S\Oph(\psi)$.

Turning to the proof for  $\SU\Oph(\psi)$,  choose $\psp \in C_c^\infty([0,1))$ so that 
$$(I-\psp(h^2\DY))\Oph(\psi_0)=O(h^\infty).$$
Then the unitarity of $\SU$ implies
 $$\SU\Oph(\psi)= (I-h^2\DY)_+^{1/4}S (I-h^2\DY)_+^{-1/4}\psp(h^2\DY)\Oph(\psi)+O(h^\infty).$$
  Since $(I-h^2\DY)_+^{-1/4}\psp(h^2\DY)\Oph(\psi)$ is a pseudodifferential operator with symbol supported in the support of 
 $\psi$, using the result for $S$ we see there is a $\psi_1\in C_c^\infty([0,1))$ so that
$$\|(I-\psi_1(h^2\DY))S\psp(h^2\DY)(I-h^2\DY)_+^{-1/4}\psp(h^2\DY)\Oph(\psi)\|=O(h^\infty).$$
Thus 
$$\SU\Oph(\psi)= (I-h^2\DY)_+^{1/4}\psi_1(h^2\DY)S (I-h^2\DY)_+^{-1/4}\psp(h^2\DY)\Oph(\psi)+O(h^\infty),$$
and the result for $\SU$ follows from the result for $S$ and composition properties of Fourier integral and pseudodiffierential operators.
\qed

\section{Equidistribution of phase shifts} \label{s:eps}
As an application of our theorems on the microlocal structure of the unitary scattering matrix $S_U$, 
in this section we prove Theorem \ref{thm:equidist}, a result about the distribution of its phase shifts.  This requires some
additional hypotheses, for which we need some background.

\subsection{Distance on $T^*Y$ and Minkowski content}
Fix any smooth Riemannian metric on $T^*Y$.  This induces a distance on each connected component of $T^*Y$.  
If $\overline{y}, \;\overline{w}\in T^*Y$ belong to different connected components of $T^*Y$, we shall say the (generalized)  distance (in $T^*Y$) between them is infinite.
We will denote this (generalized) distance by  $\operatorname{dist}_{T^*Y}$; $\operatorname{dist}_{T^*Y}:T^*Y\times T^*Y \rightarrow [0,\infty]$.
We use this to define the $(2n-2)$-dimensional Minkowski content of a bounded set $A\subset T^*Y$, where $2n-2=\dim(T^*Y)$.
The $(2n-2)$-dimensional upper Minkowski content of $A$  is
$$\mathcal{M}^{*2n-2}(A)=\lim \sup_{\delta \downarrow 0}  \mu( \{\overline{y}\in T^*Y \mid  \operatorname{dist}_{T^*Y}(\overline{y},A)<\delta\})$$
where for $B\subset T^*Y$, $\mu(B)$ is the Liouville measure of $B$.  Similarly, the $(2n-2)$-dimensional lower Minkowski content is
$$\mathcal{M}_*^{2n-2}(A)=\lim \inf_{\delta \downarrow 0}  \mu( \{\overline{y}\in T^*Y \mid  \operatorname{dist}_{T^*Y}(\overline{y},A)<\delta\}).$$
If $\mathcal{M}^{* 2n-2}(A)= \mathcal{M}_*^{2n-2}(A)$, then the $(2n-2)$-dimensional Minkowski content of $A$ is $\mathcal{M}^{ 2n-2}(A)= \mathcal{M}_*^{2n-2}(A)$.

For general $A$, the Minkowski content may depend on the choice of the metric on $T^*Y$ via the induced distance or the chosen measure.  
However, we shall only apply this for bounded sets $A$ that have zero 
$(2n-2)$-dimensional Minkowski content.  For such sets, the property of having zero 
Minkowski content is independent of the choice of  smooth metric on $T^*Y$.  Moreover, this is also true of the choice of measure, as long as the measures are mutually
absolutely continuous.

\begin{remark}  A set in a $d$-dimensional manifold that has zero $d$-dimensional Minkowski content
	has measure zero, but the converse is not true.  For example, let $\calQ$ be the intersection of the
	unit cube in $\bbR^d$ with $\bbQ^d$.  Then $\calQ$ has measure zero but  $d$-dimensional Minkowski 
	content one.
\end{remark}

%
%
%

\subsection{Hypotheses and Theorem \ref{thm:equidist}}\label{ss:edst}
Throughout Sections \ref{ss:edst} and \ref{ss:edp}, we assume:
\begin{enumerate}
\item The assumptions of at least one of Theorems \ref{thm:v1} and \ref{thm:v2} hold.
\item \label{as:Mink}For $m\in \Z$, let $\mathcal{D}_{\kappa^m}\subset \mcB$ be the domain of $\kappa^m$, where we recall $\kappa$ is the 
scattering map.  We assume that for each $m\in \Natural$ the $(2n-2)$-dimensional  Minkowski content of 
$\mcB\setminus \mathcal{D}_{\kappa^m}$ is $0$.  
\item \label{as:fp} For each $m\in \Z \setminus\{0\}$, the set of fixed points of $\kappa^m$ has measure $0$.
\end{enumerate}
In reference \cite{GHZ}, where the authors studied the equidistribution property for semiclassical Schr\"odinger operators on $\R^n$, the analogs of the first and second assumptions are implied by 
a non-trapping assumption, while the analog of the third assumption is made explicitly.  The proof we give here follows in outline much of 
the strategy of \cite{GHZ}.  
Some differences
include not having knowledge of the microlocal structure of $S$ near $\partial \mcB_Y$, and allowing for the possibility that the domain of the scattering map may
not be all of $\mcB$.

\begin{remark}\label{rmk:mapIsInjective}
Recall that for  $\overline{y}=(y,\eta)\in \mcB$ we write $\overline{y}'=(y,-\eta)$.   We shall use that since  $\kappa ( \kappa (\overline{y})')=\overline{y}'$, $\overline{y}\in \mathcal{D}_{\kappa}$ if and only if $\overline{y}'\in \mathcal{D}_{\kappa^{-1}}$, and similarly for iterates of $\kappa$.  Hence the condition we made
on the Minkowski content in assumption (\ref{as:Mink}) is equivalent to making the assumption for all $m\in \Z \setminus \{0\}$.  
\end{remark}
\begin{remark}
The examples described in Sections \ref{ss:singeex} and \ref{ss:wp} satisfy conditions (1) and (2).  We show in Section \ref{ss:smwp} that a surface of revolution with 
a bulge, as introduced in Section \ref{ss:bulge}, satisfies condition (3) as well.
\end{remark}

Let $\SU=\SU(h)= (I-h^2\DY)^{1/4}_+S(I-h^2\DY)^{-1/4}_+$ if $1\not \in \spec(h^2\DY)$, and $\SU(h)=\lim_{h'\uparrow h}\SU(h')$ if $1\in \spec(h^2\DY)$.  
It will be helpful to recall here that $S, \; \SU: \mchy \rightarrow \mchy,$ where $\mchy=\mathbbold{1}_{[0,1]}(h^2\DY)L^2(Y)$.
The operator $\SU$  is the 
unitary (on $\mch_Y$) scattering matrix.  We note that both the scattering matrix and the hypothesis \ref{as:fp} depend on the choice of coordinate $r$ on the the cylindrical end.

\begin{theorem} \label{thm:equidist} Suppose $(X,g)$ is an $n$-dimensional manifold with infinite cylindrical end, and
$(X,g)$ and the associated scattering map $\kappa$ satisfy all the conditions listed above.  Let $f\in C(\Sphere^1)$.  Then 
$$\lim_{h\downarrow 0} \left(h^{n-1}\tr_{\mchy}(f(\SU))\right) =\frac{c_{n-1} \vol(Y)}{2\pi}\int_0^{2\pi}f(e^{i\theta})d\theta$$
where $c_{n-1}$ is the usual Weyl constant in dimension $n-1$.
\end{theorem}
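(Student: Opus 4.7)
The plan is to reduce to the trigonometric monomials $f(z)=z^m$, $m\in\bbZ$, via Stone--Weierstrass, and then handle $m=0$ by the semiclassical Weyl law and $m\neq 0$ by a Fourier-integral-operator trace estimate. Since $h^{n-1}\dim\mchy=O(1)$ (Weyl's law for $h^2\DY$), the linear functionals $f\mapsto h^{n-1}\tr_{\mchy}(f(\SU))$ are uniformly bounded on $C(\Sphere^1)$, so density of trigonometric polynomials permits the reduction. Unitarity of $\SU$ on $\mchy$ gives $\SU^{-m}=(\SU^m)^*$, allowing me to restrict to $m\geq 0$. The case $m=0$ is immediate: $h^{n-1}\dim\mchy=h^{n-1}\#\{j:h^2\sigma_j^2\leq 1\}\to c_{n-1}\vol(Y)$, matching $\frac{c_{n-1}\vol(Y)}{2\pi}\int_0^{2\pi}d\theta=c_{n-1}\vol(Y)$.

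For $m\geq 1$, I fix $\delta>0$ and use hypothesis (\ref{as:Mink}) to select a cutoff $\psi_\delta\in C_c^\infty(\mathcal{D}_{\kappa^m}\cap\mcB)$ with $0\leq\psi_\delta\leq 1$, $\psi_\delta\equiv 1$ on a compact $K_\delta\subset\mathcal{D}_{\kappa^m}\cap\mcB$ with $\mu(\mcB\setminus K_\delta)<\delta$, and $\supp\psi_\delta\subset\{|\eta|<1-\delta/2\}$. I then split
\[
\tr_{\mchy}(\SU^m) = \tr_{\mchy}\bigl(\Oph(\psi_\delta)\SU^m\bigr) + \tr_{\mchy}\bigl((I-\Oph(\psi_\delta))\SU^m\bigr).
\]
For the second (error) term, $\|\SU^m\|_{op}=1$ together with trace duality yields the bound $\|\bbI(h^2\DY)(I-\Oph(\psi_\delta))\bbI(h^2\DY)\|_{\tr}$; after a sharp G{\aa}rding reduction using the non-negative principal symbol $1-\psi_\delta\geq 0$ and the semiclassical Weyl law for pseudodifferential operators, this trace norm is bounded by $(2\pi h)^{-(n-1)}\int_\mcB(1-\psi_\delta)\,dy\,d\eta+o(h^{-(n-1)})\leq C\delta\, h^{-(n-1)}$.

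For the main term, iterated application of Theorem \ref{thm:v1} or \ref{thm:v2} together with the FIO composition theorem (clean intersection is automatic for compositions of graphs of canonical transformations) realizes $\Oph(\psi_\delta)\SU^m$ as a semiclassical FIO associated with the graph of $\kappa^m|_{\supp\psi_\delta}$; the iteration is valid because $\supp\psi_\delta\subset\mathcal{D}_{\kappa^m}$ forces $\kappa^k(\supp\psi_\delta)\subset\mathcal{D}_\kappa$ for each $0\leq k<m$. Its trace is then the standard oscillatory integral along the diagonal, whose phase is stationary precisely at the fixed points of $\kappa^m$ in $\supp\psi_\delta$. By hypothesis (\ref{as:fp}) this set has measure zero, so for every $\eta>0$ I decompose $\psi_\delta=\psi'+\psi''$ with $\psi'$ supported in an open neighborhood of $\mathrm{Fix}(\kappa^m)$ of measure $<\eta$ and $\psi''$ supported away from fixed points. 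Then $\tr(\Oph(\psi'')\SU^m)=O(h^\infty)$ by non-stationary phase, while $|\tr(\Oph(\psi')\SU^m)|\leq \|\Oph(\psi')\bbI(h^2\DY)\|_{\tr}\leq C\eta\, h^{-(n-1)}$ by the Weyl-law argument used above. Letting $\eta\to 0$ gives $\tr_{\mchy}(\Oph(\psi_\delta)\SU^m)=o(h^{-(n-1)})$.

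Combining the two estimates, $\limsup_{h\downarrow 0}h^{n-1}|\tr_{\mchy}(\SU^m)|\leq C\delta$; letting $\delta\downarrow 0$ concludes the proof for $m\neq 0$. The main obstacle will be the rigorous execution of the fixed-point analysis in the main term: $\mathrm{Fix}(\kappa^m)$ is only assumed to have measure zero and may contain components of various codimensions, so one cannot invoke a clean stationary-phase formula over a smooth submanifold; the measure-zero hypothesis must instead be injected through the partition $\psi_\delta=\psi'+\psi''$ together with uniform Calder{\'o}n--Vaillancourt control of $\Oph(\psi')$ and a Weyl count of eigenvalues of $\DY$ encountered in $\supp\psi'$. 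Additionally, the supports of $\psi'$ must be chosen a fixed distance away from $\partial\mcB$, so that $\bbI(h^2\DY)\Oph(\psi')=\Oph(\psi')+O(h^\infty)$ holds and the spectral cutoff does not spoil the trace-class estimate.
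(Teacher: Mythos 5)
Your proposal is correct and follows essentially the same route as the paper: approximate $f$ by trigonometric polynomials, use the Minkowski-content hypothesis to produce a cutoff $\Oph(\psi)$ whose complementary projection has trace norm $O(\epsilon h^{1-n})$ (the paper's Lemma \ref{l:psiepsilon} and Corollary \ref{c:epsclose}), realize $\SU^m\Oph(\psi)$ as an FIO attached to the graph of $\kappa^m$ by iterating Theorem \ref{thm:v1} or \ref{thm:v2} with intermediate microlocal cutoffs (Lemma \ref{l:compFIO}), and kill the traces for $m\neq 0$ using the measure-zero fixed-point hypothesis (Lemma \ref{l:trpowers}). The only deviations are minor: you dispose of negative powers via $\tr(\SU^{-m})=\overline{\tr(\SU^{m})}$, where the paper instead uses $\SU^T=\SU$ and conjugation by complex conjugation to show $\SU^{-m}\Oph(\psi)$ is itself an FIO, and you prove the $o(h^{1-n})$ trace bound directly by splitting $\psi$ into pieces near and away from $\mathrm{Fix}(\kappa^m)$, where the paper cites \cite[Proposition 7.1]{GHZ} for exactly this argument.
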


 Subscripts on the trace in this section  and the next indicate
the space in which the trace is taken.

An immediate corollary of this Theorem is the following equidistribution result.
\begin{corollary} Let $0\leq \theta_1< \theta_2< 2\pi$.  Then
$$\lim_{h\downarrow 0} \left(h^{n-1} N(\theta_1,\theta_2,h) \right)  =\frac{c_{n-1} \vol(Y)}{2\pi}(\theta_2-\theta_1)$$
where $N(\theta_1,\theta_2,h)$ is the number of eigenvalues of $\SU$ with argument between $\theta_1$ and $\theta_2$.
\end{corollary}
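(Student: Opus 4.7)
The plan is to deduce the corollary from Theorem~\ref{thm:equidist} by a standard sandwich/approximation argument, using the fact that $\SU$ is a unitary operator on the finite-dimensional space $\mchy$, so its eigenvalues lie on $\Sphere^1$ and $N(\theta_1,\theta_2,h) = \tr_{\mchy}(\chi(\SU))$ where $\chi$ is the characteristic function of the open arc $\{e^{i\theta} : \theta_1 < \theta < \theta_2\}$.

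First I would fix $\epsilon>0$ small and construct two continuous functions $f_\epsilon^{\pm}\in C(\Sphere^1;[0,1])$ which sandwich $\chi$, i.e.\
\[
f_\epsilon^-\leq \chi\leq f_\epsilon^+ \text{ on } \Sphere^1,
\]
with $f_\epsilon^{\pm}$ agreeing with $\chi$ outside an $\epsilon$-neighborhood of the two endpoints $e^{i\theta_1}$, $e^{i\theta_2}$, and such that
\[
\int_0^{2\pi} \bigl(f_\epsilon^+(e^{i\theta}) - f_\epsilon^-(e^{i\theta})\bigr)\,d\theta \leq C\epsilon.
\]
This is a routine construction (e.g.\ piecewise linear cutoffs on arcs of length $\epsilon$).

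Next, since $\SU$ is unitary on the finite-dimensional Hilbert space $\mchy$, the functional calculus for $\SU$ is well-defined for any bounded Borel function, and the operator ordering $f_\epsilon^-\leq \chi\leq f_\epsilon^+$ of real-valued functions on $\Sphere^1$ translates (by the spectral theorem for normal operators on a finite-dimensional space) into the operator inequality
\[
f_\epsilon^-(\SU)\leq \chi(\SU)\leq f_\epsilon^+(\SU)
\]
as self-adjoint operators on $\mchy$. Taking traces preserves this ordering, so
\[
\tr_{\mchy}\bigl(f_\epsilon^-(\SU)\bigr)\leq N(\theta_1,\theta_2,h)\leq \tr_{\mchy}\bigl(f_\epsilon^+(\SU)\bigr).
\]

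Now I would apply Theorem~\ref{thm:equidist} to the continuous functions $f_\epsilon^\pm$, obtaining
\[
\lim_{h\downarrow 0} h^{n-1}\tr_{\mchy}\bigl(f_\epsilon^\pm(\SU)\bigr) = \frac{c_{n-1}\vol(Y)}{2\pi}\int_0^{2\pi} f_\epsilon^\pm(e^{i\theta})\,d\theta.
\]
Combining with the sandwich yields
\[
\frac{c_{n-1}\vol(Y)}{2\pi}\!\!\int_0^{2\pi}\!\!\! f_\epsilon^-d\theta \leq \liminf_{h\downarrow 0} h^{n-1}N \leq \limsup_{h\downarrow 0} h^{n-1}N \leq \frac{c_{n-1}\vol(Y)}{2\pi}\!\!\int_0^{2\pi}\!\!\! f_\epsilon^+d\theta,
\]
and both extremes differ by at most $\frac{c_{n-1}\vol(Y)}{2\pi}\cdot C\epsilon$. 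Letting $\epsilon\downarrow 0$ and observing that $\int_0^{2\pi} f_\epsilon^\pm d\theta\to \theta_2-\theta_1$ gives the desired limit. No step here is a real obstacle — everything is a standard consequence of Theorem~\ref{thm:equidist} — but the one point worth being explicit about is that the approximation requires $f_\epsilon^\pm$ to be continuous on all of $\Sphere^1$ (including the point $e^{i\cdot 0}=1$, which is why we work with the arc $(\theta_1,\theta_2)\subset (0,2\pi)$ without ambiguity) and that the finite-dimensionality of $\mchy$ makes the functional calculus trivially well-defined for Borel symbols such as $\chi$.
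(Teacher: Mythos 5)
Your proof is correct, and it is exactly the standard approximation argument the authors have in mind when they call this an ``immediate corollary'' of Theorem \ref{thm:equidist} (the paper gives no written proof). Sandwiching the indicator of the arc between continuous functions, applying the theorem to each, and letting $\epsilon\downarrow 0$ is the intended route, and your handling of the endpoints and of the finite-dimensional functional calculus is sound.
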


\subsection {Proof of Theorem \ref{thm:equidist}}\label{ss:edp}
We begin with a result on the structure of the iterates of the unitary scattering matrix.
\begin{lemma}\label{l:compFIO}  
Let $m\in \Z\setminus\{0\}$ and let $\psi_0 \in C_c^\infty(\mcB)$ be supported in  $\mathcal{D}_{\kappa^m}$.  Then under the hypotheses of Theorem \ref{thm:equidist},
$(\SU)^m \Oph(\psi_0)$ is a semiclassical Fourier integral operator associated to the graph of $\kappa^m$.  
\end{lemma}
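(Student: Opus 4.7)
The plan is to prove the lemma for $m>0$ by induction using Theorem~\ref{thm:v1} or \ref{thm:v2}, and to reduce the case $m<0$ to the positive case via the quantum time-reversal identity $\SU^{-1}=C\SU C$, where $C$ denotes complex conjugation.

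For the positive case, the base $m=1$ is the content of Theorem~\ref{thm:v1}/\ref{thm:v2}. Suppose inductively that $\SU^{m-1}\Oph(\psi_0)$ is a semiclassical FIO associated to the graph of $\kappa^{m-1}|_{\supp\psi_0}$. Since $\supp\psi_0\subset\mathcal{D}_{\kappa^m}$, the image $\kappa^{m-1}(\supp\psi_0)$ is a compact subset of $\mathcal{D}_\kappa$, so I can choose $\psi_{m-1}\in C_c^\infty(\mcB)$ with $\supp\psi_{m-1}\subset\mathcal{D}_\kappa$ and $\psi_{m-1}\equiv 1$ on a neighborhood of $\kappa^{m-1}(\supp\psi_0)$. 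I then decompose
\[
\SU^m\Oph(\psi_0) = \SU\Oph(\psi_{m-1})\SU^{m-1}\Oph(\psi_0) + \SU\bigl(I-\Oph(\psi_{m-1})\bigr)\SU^{m-1}\Oph(\psi_0).
\]
The second summand is $O(h^\infty)$ in operator norm: the output wavefront of $\SU^{m-1}\Oph(\psi_0)$ lies in $\kappa^{m-1}(\supp\psi_0)$ by the inductive hypothesis, the symbol of $I-\Oph(\psi_{m-1})$ vanishes to infinite order there, and unitarity of $\SU$ preserves the bound. The first summand is a composition of two semiclassical FIOs — $\SU\Oph(\psi_{m-1})$ by Theorem~\ref{thm:v1}/\ref{thm:v2}, and $\SU^{m-1}\Oph(\psi_0)$ by the inductive hypothesis — both of whose canonical relations are graphs of diffeomorphisms, so the intersection is transverse, hence clean. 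The semiclassical composition theorem (cf.\ \cite[Theorem~18.13.1]{GS}) then yields an FIO associated to $\kappa\circ\kappa^{m-1}|_{\supp\psi_0}=\kappa^m|_{\supp\psi_0}$, completing the induction.

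For $m=-k<0$, I would first verify $\SU^{-1}=C\SU C$ directly from the definition: since $P$ has real coefficients and $\DY$ is real, if $F$ is a generalized eigenfunction with expansion (\ref{eq:uexp}), then $\overline F$ has incoming and outgoing data swapped and conjugated, yielding $S^{-1}=\overline S$ on $\mchy$, and hence $\SU^{-1}=\overline\SU=C\SU C$. Iterating gives $\SU^{-k}=C\SU^k C$, so
\[
\SU^{-k}\Oph(\psi_0)=C\,\SU^k\Oph(\tilde\psi_0)\,C,\qquad \tilde\psi_0(y,\eta):=\overline{\psi_0(y,-\eta)},
\]
using the intertwining $C\Oph(\psi_0)=\Oph(\tilde\psi_0)\,C$. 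By Remark~\ref{rmk:mapIsInjective}, $\supp\psi_0\subset\mathcal{D}_{\kappa^{-k}}$ forces $\supp\tilde\psi_0\subset\mathcal{D}_{\kappa^k}$, so the positive case applies to $\SU^k\Oph(\tilde\psi_0)$. Conjugation by $C$ on each side reflects the canonical relation by $(y,\eta)\mapsto(y,-\eta)$ on both factors, and iterating the identity $\kappa(\overline y')'=\kappa^{-1}(\overline y)$ — a direct consequence of $\kappa(\kappa(\overline y)')=\overline y'$ — transforms the resulting relation into the graph of $\kappa^{-k}|_{\supp\psi_0}$, as required.

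The main technical point is the careful bookkeeping of the intermediate cutoffs: each $\psi_j$ must be supported in $\mathcal{D}_\kappa$ so that Theorems~\ref{thm:v1}/\ref{thm:v2} apply, while being identically one on the image of the preceding iterate so that the remainder is $O(h^\infty)$. Verifying that the composition of canonical relations tracks correctly through conjugation by $C$ is the other delicate bit. Neither assumption~(\ref{as:Mink}) nor~(\ref{as:fp}) of Section~\ref{ss:edst} is needed for this lemma itself; both enter essentially in the subsequent derivation of the trace asymptotics of Theorem~\ref{thm:equidist}.
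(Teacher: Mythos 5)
Your argument is correct and follows essentially the same route as the paper: induction on positive $m$ using an intermediate cutoff $\psi_{m-1}$ supported in $\mathcal{D}_\kappa$ and equal to one on $\kappa^{m-1}(\supp\psi_0)$ so that the remainder $\SU(I-\Oph(\psi_{m-1}))\SU^{m-1}\Oph(\psi_0)$ is $O(h^\infty)$, followed by a reduction of negative powers to positive ones via complex conjugation and the involution $\y\mapsto\y'$. The only substantive difference is the justification of the time-reversal identity: the paper obtains $\SU^{-1}=\mathcal{C}\,\SU\,\mathcal{C}$ from unitarity combined with the symmetry $\SU^T=\SU$ (citing Lemma 3.1 of \cite{par}), whereas you derive $S^{-1}=\overline{S}$ directly by conjugating generalized eigenfunctions — both are valid, and your identity $\SU^{-k}=\mathcal{C}\,\SU^{k}\,\mathcal{C}$ handles all negative $m$ at once where the paper treats $m=-1$ and then inducts.
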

\begin{proof}

Theorem \ref{thm:v1} or \ref{thm:v2} implies the result for $m=1$.

Now suppose the lemma has been proved for $1\leq m \leq m'$.  We shall show that it holds for $m=m'+1$, proving the lemma for positive $m$ by
induction.  Recall now we assume that $\supp \psi_0\subset \mathcal{D}_{\kappa^{m'+1}}$, and use $\mathcal{D}_{\kappa^{m'+1}}\subset \mathcal{D}_{\kappa^{m'}}$.  Choose 
$\psi_{m'}\in C_c^\infty(\mcB)$ to be supported on the domain of $\kappa$ and to be $1$ on 
$\{ \kappa^{m'}(y,\eta)\mid (y,\eta)\in \supp \psi_0\}$.  
Then choose $\psi_{sp,m'}\in C_c^\infty([0,1)) $ so that 
$(I-\psi_{sp,m'}(h^2\DY))\Oph(\psi_{m'})=O(h^\infty)$.
We write  
\begin{align*}& (\SU)^{m'+1} \Oph(\psi_0)\\ & = \SU ( \psi_{sp,m'}(h^2\DY)+I-\psi_{sp,m'}(h^2\DY))\Oph(\psi_{m'}) (\SU)^{m'} \Oph(\psi_0)\\ & 
\hspace{3mm}
+ \SU (I-\Oph(\psi_{m'}))(\SU)^{m'} \Oph(\psi_0) \\
& = 
\SU\psi_{sp,m'}(h^2\DY)\Oph(\psi_{m'})(\SU)^{m'} \Oph(\psi_0)  + \SU (I-\Oph(\psi_{m'}))(\SU)^{m'} \Oph(\psi_0) +O(h^\infty).
\end{align*}
That this is a semiclassical FIO associated to $\kappa^{m'+1}$ follows from
the inductive hypothesis, an application of Theorem \ref{thm:v1} or \ref{thm:v2}, and the composition properties of Fourier integral operators.  Thus 
concludes the proof for positive $m$.


We now turn to the result for $\SU^{-1}$.  
We shall use that since  $\kappa ( (\kappa (\overline{y}))')=\overline{y}'$, using the notation $(\mathcal{D}_{\kappa^{-1}})'=\{\overline{y} \mid \overline{y}'\in \mathcal{D}_{\kappa^{-1}}\}, $
 gives $(\mathcal{D}_{\kappa^{-1}})'= \mathcal{D}_\kappa$.

Lemma 3.1 of \cite{par} implies that $\SU^T=\SU$, where $\SU^T$ denotes the transpose of $\SU$.  Then for any 
${\psi}\in C_c^\infty(\mathcal{D}_\kappa)$,
$ \SU^T \Oph({\psi}) =\SU \Oph({\psi})$ is a semiclassical FIO associated to the scattering map $\kappa$.  Denote complex conjugation by 
$\mathcal{C}$, and let  $\psi_0\in C_c^\infty(\mathcal{D}_{\kappa^{-1}})$.  As an operator on $\mch_Y$,  $\SU^{-1}=\SU^*$
and $\SU^*\Oph(\psi_0)= \mathcal{C} \:\SU^T \mathcal{C}\Oph(\psi_0)$.  But $\mathcal{C}\Oph(\psi_0)=\Oph({\psi})\mathcal{C}$ for some ${\psi}\in C_c^\infty((\mathcal{D}_{\kappa^{-1}})')= C_c^\infty(\mathcal{D}_\kappa)$, so that
$\SU^*\Oph(\psi_0)= \mathcal{C} \: \SU \Oph({\psi})\mathcal{C}$.    Now using that we know that $\SU \Oph({\psi})$ is a semiclassical FIO, the properties of FIOs under
conjugation by the action of the complex conjugate $\mathcal{C}$,  and the equality of sets 
$\{ ((\kappa(\overline{y}))',\overline{y}') \mid \overline{y}\in \mathcal{D}_{\kappa}\}= \{ (\kappa^{-1}(\overline{y}),\overline{y})  \mid  \overline{y}\in \mathcal{D}_{\kappa^{-1}}\}$ we prove the 
second assertion in the special case $m=1$.

The general case of negative values of $m$ can be proved by induction, in much the same manner as for positive $m$.
\end{proof}

\begin{lemma}\label{l:psiepsilon}  Under the hypotheses of Theorem \ref{thm:equidist}, for any $m\in \Natural$, $\epsilon>0$ there is a $\psi\in C_c^\infty(\mathcal{D}_{\kappa^m}\cap \mathcal{D}_{\kappa^{-m}})$ so that for $h>0$ sufficiently small,
 $\| (I-\Oph(\psi))\mathbbold{1}_{[0,1]}(h^2\DY)\|_{\tr_{L^2(Y)}}\leq \epsilon h^{-n+1}$.
 \end{lemma}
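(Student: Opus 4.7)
The plan is to combine the zero-Minkowski-content hypothesis with the semiclassical Weyl trace formula and the singular-value bound $\|A\|_{\tr_{L^2(Y)}}\le\sqrt{\operatorname{rank}(A)}\,\|A\|_{HS}$. Given $\epsilon>0$, set $K := \mcB \setminus (\mathcal{D}_{\kappa^m} \cap \mathcal{D}_{\kappa^{-m}})$. By hypothesis \ref{as:Mink} and Remark \ref{rmk:mapIsInjective} applied to both $\pm m$, the set $K$ has zero $(2n-2)$-dimensional Minkowski content, so $\lim_{\delta\downarrow 0}\mu(N_\delta(K))=0$.  We choose $\delta_1>0$ small and an open neighborhood $U\supset\overline{K}$ (closure in $T^*Y$) with both $\mu(U)$ and $\delta_1$ smaller than a constant multiple of $\epsilon^2$ to be fixed below, pick $\psi\in C_c^\infty(\mathcal{D}_{\kappa^m}\cap\mathcal{D}_{\kappa^{-m}})$ with $0\le\psi\le 1$ and $\psi\equiv 1$ on the compact set $\{|\eta|^2\le 1-\delta_1\}\setminus U\subset \mathcal{D}_{\kappa^m}\cap\mathcal{D}_{\kappa^{-m}}$, and also pick $\tilde\psi\in C_c^\infty([0,1))$ with $\tilde\psi\equiv 1$ on $[0,1-2\delta_1]$ and $\supp\tilde\psi\subset[0,1-\delta_1]$.

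Next we split $(I-\Oph(\psi))\mathbbold{1}_{[0,1]}(h^2\DY) = A_1 + A_2,$ with $A_1 := (I-\Oph(\psi))\tilde\psi(h^2\DY)$ and $A_2 := (I-\Oph(\psi))(\mathbbold{1}_{[0,1]}-\tilde\psi)(h^2\DY)$.  The range of $(\mathbbold{1}_{[0,1]}-\tilde\psi)(h^2\DY)$ is spanned by the eigenfunctions of $\DY$ with $h^2\sigma_j^2 \in (1-2\delta_1,1]$; sharp Weyl's law on the closed $(n-1)$-dimensional manifold $Y$ bounds this dimension by $C\delta_1\,h^{-n+1}+O(h^{-n+2})$ for small $h$.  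Since $\|\Oph(\psi)\|=O(1)$ (Calderón--Vaillancourt), the estimate $\|A_2\|_{\tr} \le \operatorname{rank}(A_2)\,\|I-\Oph(\psi)\|$ gives $\|A_2\|_{\tr}\le C'\delta_1\, h^{-n+1}$.

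The main term is $A_1$.  By Helffer--Sjöstrand, $\tilde\psi(h^2\DY)$ is a compactly microsupported semiclassical pseudodifferential operator with principal symbol $\tilde\psi(|\eta|^2)$, so
\[
A_1^*A_1 = \tilde\psi(h^2\DY)(I-\Oph(\psi))^*(I-\Oph(\psi))\tilde\psi(h^2\DY)
\]
is a nonnegative semiclassical pseudodifferential operator with compactly supported principal symbol $\tilde\psi(|\eta|^2)^2(1-\psi)^2$.  The semiclassical Weyl trace formula yields
\[
\|A_1\|_{HS}^2 = \tr(A_1^*A_1) = (2\pi h)^{-(n-1)}\!\!\int_{T^*Y}\tilde\psi(|\eta|^2)^2(1-\psi(y,\eta))^2\,dy\,d\eta + O(h^{-n+2}).
\]
By construction the integrand vanishes outside $U\cap\{|\eta|^2\le 1-\delta_1\}$ and is bounded by $1$, so the integral is at most $\mu(U)$.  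Combining with $\operatorname{rank}(A_1)\le\operatorname{rank}(\tilde\psi(h^2\DY)) = O(h^{-n+1})$ via the singular-value bound, we deduce $\|A_1\|_{\tr} \le C''\bigl(\sqrt{\mu(U)}\,h^{-n+1} + h^{-n+3/2}\bigr)$.  Taking $\delta_1$ and $\mu(U)$ sufficiently small in the first step, and then $h$ small enough to absorb the $O(h^{-n+3/2})$ term, yields the desired bound $\epsilon h^{-n+1}$.

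The main obstacle we anticipate is the careful accounting of the semiclassical Weyl remainder for $A_1^*A_1$.  The principal-symbol computation is clean, but to ensure that the remainder is genuinely of order $h^{-n+2}$ (a factor $h$ below the leading term, and thus negligible after dividing by $h^{-n+1}$) we must realize $\tilde\psi(h^2\DY)$ through a Helffer--Sjöstrand representation so that it belongs to the semiclassical pseudodifferential calculus with a full symbolic expansion, and then track subprincipal contributions through the noncommutative composition with $(I-\Oph(\psi))^*(I-\Oph(\psi))$.
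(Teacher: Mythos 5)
Your proof is correct and follows essentially the same strategy as the paper: build $\psi$ equal to $1$ outside a small-Liouville-measure neighborhood of $\mcB\setminus(\mathcal{D}_{\kappa^m}\cap\mathcal{D}_{\kappa^{-m}})$ (using the zero Minkowski content hypothesis) union a thin shell near $|\eta|^2=1$, then bound the trace norm by $\sqrt{\operatorname{rank}}\cdot\|\cdot\|_{HS}$ with the rank controlled by Weyl's law and the Hilbert--Schmidt norm by the small support of the symbol. The only (cosmetic) difference is that you split off the energy shell $(1-2\delta_1,1]$ and handle it by a pure rank bound, whereas the paper absorbs it into the Hilbert--Schmidt integral via a cutoff $\chi_\delta$ supported in $[0,1+\delta)$.
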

 \begin{proof}
 For $m$ fixed and $\delta>0$, set 
 $$U_\delta:= \{ \overline{y}\in \mcB \mid \operatorname{dist}_{T^*Y}(\overline{y}, T^*Y \setminus \left( \mathcal{D}_{\kappa^m}\cap \mathcal{D}_{\kappa^{-m}})\right)>\delta\}$$
 and $V_\delta:= \mcB \setminus \overline{U_\delta}$.   Note that $U_\delta$ is open, and $U_\delta \subset U_{\delta/2}$.
 Let $\psi_\delta\in C_c^{\infty}(U_{\delta/2})\subset C_c^\infty(\mcB)$ satisfy $0\leq \psi_\delta\leq 1$ and $1-\psi_\delta=0$ on $U_{\delta}$.  
  
  Let $\chi_\delta\in C_c^\infty([0,1+\delta);[0,1])$ with $\chi_\delta(t)=1$ for $t\in [0,1]$, and note 
 \begin{align*}
 \| (I-\Oph(\psi_\delta))\mathbbold{1}_{[0,1]}(h^2\DY)\|_{\tr_{L^2(Y)}} & = \| (I-\Oph(\psi_\delta))\chi_\delta(h^2\DY))\mathbbold{1}_{[0,1]}(h^2\DY)\|_{\tr_{L^2(Y)}}\\
 & \leq  \| (I-\Oph(\psi_\delta))\chi_\delta(h^2\DY)\| _{HS_{L^2(Y)}} \| \mathbbold{1}_{[0,1]}(h^2\DY)\|_{HS_{L^2(Y)}}
 \end{align*}
 where $\|\bullet \|_{HS}$ denotes the Hilbert-Schmidt norm.  Now 
 \begin{align*}
& \| (I-\Oph(\psi_\delta))\chi_\delta(h^2\DY)\| _{HS_{L^2(Y)}}^2\\ & = \tr_{L^2(Y) }\left( \left( (I-\Oph(\psi_\delta))\chi_\delta(h^2\DY)\right) ^*(I-\Oph(\psi_\delta))\chi_\delta(h^2\DY) \right) \\
 & \leq C (2\pi h)^{1-n}\int_{T^*Y} \left| (1-\psi_\delta(y,\eta)) \chi_{\delta}(|\eta|)\right|^2d\mu+  O(h^{2-n})
 \end{align*}
 for some $C>0$ independent of $\delta$ and $h$.   Here $\mu$ is the Liouville measure.   By
 the Weyl law, $\| \mathbbold{1}_{[0,1]}(h^2\DY)\|_{HS_{L^2(Y)}}^2= c_{n-1}h^{1-n}\vol(Y)+O(h^{2-n})$.
 Thus there is a constant $C_0$ independent of $\delta$ and $h$ so that 
 \begin{equation}\label{eq:dint}
  \| (I-\Oph(\psi))\mathbbold{1}_{[0,1]}(h^2\DY)\|_{\tr_{L^2(Y)} }\leq C_0 h^{1-n} \left(\int_{T^*Y} \left| (1-\psi_\delta(y,\eta))\chi_{\delta}(|\eta|)\right|^2  \right)^{1/2} d\mu+ O(h^{2-n}).
  \end{equation}
 The integrand on the right in (\ref{eq:dint}) takes values in $[0,1]$ and is supported in $V_\delta\cup\{ \overline{y}\in T^*Y \mid 1\leq |\eta|\leq 1+\delta\}$.
 Let $W_\delta=\{\overline{y}=(y,\eta)\in T^*Y \mid  1-\delta< |\eta|<1+\delta\}$, and note 
 \begin{multline}\label{eq:VdWd}
 V_\delta \setminus \overline{W_\delta}\subset \left\{ \overline{y}\in \mcB \mid  \operatorname{dist}_{T^*Y}(\overline{y}, \mcB\setminus(\mathcal{D}_{\kappa^m}\cap \mathcal{D}_{\kappa^{-m}}))<\delta \right\}\\
 \subset \left\{ \overline{y}\in T^*Y\mid \operatorname{dist}_{T^*Y}((\overline{y}, \mcB \setminus(\mathcal{D}_{\kappa^m}\cap \mathcal{D}_{\kappa^{-m}}))<\delta \right\}.
 \end{multline}
 Since by hypothesis (\ref{as:Mink}) both $\mcB\setminus \mathcal{D}_{\kappa^m}$ and $\mcB\setminus \mathcal{D}_{\kappa^{-m}}$ have zero $(2n-2)$-dimensional Minkowski content, so does
 $\mcB\setminus (\mathcal{D}_{\kappa^m} \cap \mathcal{D}_{\kappa^{-m}})= (\mcB\setminus \mathcal{D}_{\kappa^m})\cup(\mcB\setminus \mathcal{D}_{\kappa^{-m}})$.
 Thus
 (\ref{eq:VdWd}) implies
 $\int_{V_\delta \setminus \overline{W_\delta}}1 d\mu \rightarrow 0$ as $\delta\downarrow 0$.  Of course $\int_{\overline{W_\delta }} 1d\mu \rightarrow 0$ as $\delta \downarrow 0$.  
 Hence,  since
 \begin{equation}
 \int_{T^*Y} \left| (1-\psi_\delta(y,\eta))\chi_{\delta}(|\eta|)\right|^2 d\mu   \leq \int_{V_\delta \cup W_\delta} 1 d\mu
 \end{equation}
 we may choose $\delta_0>0$ small enough so that 
 $$C_0 \left(\int_{T^*Y} \left|( (1-\psi_{\delta_0}(y,\eta))\chi_{\delta_0}(|\eta|)\right|^2  \right)^{1/2}d\mu <\epsilon/2.
 $$
   Then set $\psi=\psi_{\delta_0}$,  
 and we have chosen $\psi$ so that  
 $$\| (I-\Oph(\psi))\mathbbold{1}_{[0,1]}(h^2\DY)\|_{\tr_{L^2(Y)}}\leq (\epsilon/2) h^{-n+1}+O(h^{2-n}).$$ When $h>0$ is sufficiently small, we have the desired estimate.
 \end{proof}
 
 \begin{corollary}
\label{c:epsclose} Under the hypotheses of Theorem \ref{thm:equidist}, for any $m\in \Z$ and $\epsilon>0$ there is a $\psi\in C_c^\infty(\mathcal{D}_{\kappa^m}
\cap \mathcal{D}_{\kappa^{-m}})$ so that for $h$ sufficiently small
$|\tr_{\mch_Y}(f(\SU)(I-\Oph(\psi)) | \leq \epsilon h^{-n+1}\sup|f|$.
\end{corollary}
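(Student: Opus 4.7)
The plan is to combine Lemma \ref{l:psiepsilon} with the standard trace-operator inequality $|\tr(AB)|\le \|A\|_{\text{op}}\|B\|_{\tr}$; the corollary will then follow from a one-line estimate.

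First I would observe that $\mathcal{D}_{\kappa^m}\cap\mathcal{D}_{\kappa^{-m}}=\mathcal{D}_{\kappa^{|m|}}\cap\mathcal{D}_{\kappa^{-|m|}}$, so it suffices to treat $m\in\Natural$, which is exactly the range in which Lemma \ref{l:psiepsilon} applies. I would then write $P:=\mathbbold{1}_{[0,1]}(h^2\DY)$ for the orthogonal projection onto $\mchy$, and extend $f(\SU)$ to an operator on $L^2(Y)$ by setting it equal to zero on $\mchy^\perp$, so that $f(\SU)=Pf(\SU)P$. Because $\SU$ is unitary on $\mchy$, the functional calculus gives $\|f(\SU)\|_{L^2(Y)\to L^2(Y)}\le \sup|f|$.

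Next, cyclicity of the trace (together with $P^2=P$) yields
\[
\tr_{\mchy}\!\bigl(f(\SU)(I-\Oph(\psi))\bigr)=\tr_{L^2(Y)}\!\bigl(f(\SU)\,P(I-\Oph(\psi))P\bigr),
\]
and the trace-operator inequality together with $\|P\|=1$ gives
\[
\bigl|\tr_{\mchy}\!\bigl(f(\SU)(I-\Oph(\psi))\bigr)\bigr|\le \sup|f|\cdot\bigl\|(I-\Oph(\psi))P\bigr\|_{\tr_{L^2(Y)}}.
\]
Feeding into the right-hand side the $\psi\in C_c^\infty(\mathcal{D}_{\kappa^m}\cap\mathcal{D}_{\kappa^{-m}})$ produced by Lemma \ref{l:psiepsilon} bounds it by $\epsilon h^{-n+1}\sup|f|$ for all sufficiently small $h$, which is exactly the asserted estimate.

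There is no real obstacle here: the analytic work has already been done in Lemma \ref{l:psiepsilon}, where the Minkowski-content hypothesis (\ref{as:Mink}) was used to trap the ``bad'' part of $\mchy$ in a phase-space region of arbitrarily small volume. The only point I would be slightly careful about is the identification of $\tr_{\mchy}$ with a trace on $L^2(Y)$, but once $f(\SU)$ is viewed as a $\mchy$-supported operator on $L^2(Y)$ this is simply cyclicity.
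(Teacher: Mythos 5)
Your proposal is correct and follows essentially the same route as the paper: both invoke Lemma \ref{l:psiepsilon}, rewrite the $\mchy$-trace as an $L^2(Y)$-trace by inserting the projection $\mathbbold{1}_{[0,1]}(h^2\DY)$, and apply $|\tr(AB)|\le\|A\|\,\|B\|_{\tr}$ together with $\|f(\SU)\|\le\sup|f|$. Your extra remark reducing $m\in\Z$ to $m\in\Natural$ via the symmetry of $\mathcal{D}_{\kappa^m}\cap\mathcal{D}_{\kappa^{-m}}$ is a harmless (and reasonable) addition.
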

\begin{proof}
Let $\psi$ be as guaranteed by Lemma \ref{l:psiepsilon}.  Then
\begin{multline*}
|\tr_{\mch_Y}(f(\SU)(I-\Oph(\psi))|= |\tr_{L^2(Y)} (f(\SU)(I-\Oph(\psi)) \mathbbold{1}_{[0,1]}(h^2\DY)|  \\ \leq  \| f(\SU)\| \|(I-\Oph(\psi)) \mathbbold{1}_{[0,1]}(h^2\DY)\| _{\tr_{L^2(Y)}} \leq \sup|f| \epsilon h^{-n+1}.
\end{multline*}
\end{proof}

\begin{lemma}\label{l:trpowers} 
Let $m\in {\bf Z} \setminus\{ 0\}$, and $\psi\in C_c^\infty(\mathcal{D}_{\kappa^m})$.  Then under the hypotheses of Theorem \ref{thm:equidist}, $\tr_{\mch_Y} \left(\SU^m \Oph(\psi)\right) =o(h^{1-n})$.
\end{lemma}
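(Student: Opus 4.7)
The plan is to split $\psi$ into a piece supported on a small open neighborhood of the fixed point set of $\kappa^m$, where a crude trace-norm bound combined with hypothesis (\ref{as:fp}) suffices, and a complementary piece away from fixed points, where the canonical relation of the associated Fourier integral operator misses the diagonal and thus forces the trace to be $O(h^\infty)$.

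Fix $\epsilon > 0$. The set $F_m := \{\overline{y} \in \supp \psi : \kappa^m(\overline{y}) = \overline{y}\}$ is a closed subset of $\supp \psi \subset \mathcal{D}_{\kappa^m}$, hence compact, and by hypothesis (\ref{as:fp}) has Liouville measure $\mu$ equal to zero. By outer regularity of $\mu$, choose an open set $U_\epsilon \subset \mathcal{D}_{\kappa^m}$ containing $F_m$ with $\mu(U_\epsilon) < \epsilon^2$. Using a smooth partition of unity subordinate to the open cover $\{U_\epsilon,\, \mcB \setminus F_m\}$ of $\supp \psi$, write $\psi = \psi^+ + \psi^-$ with $\psi^+ \in C_c^\infty(U_\epsilon)$ and $\supp \psi^- \subset \mathcal{D}_{\kappa^m} \setminus F_m$ compact.

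For the piece near fixed points, unitarity of $\SU$ on $\mch_Y$ gives $\|\SU^m\|_{\mch_Y \to \mch_Y} = 1$, so using Cauchy--Schwarz for trace and Hilbert--Schmidt norms (as in the proof of Lemma \ref{l:psiepsilon}) together with the standard semiclassical Weyl estimate,
\begin{equation*}
|\tr_{\mch_Y}(\SU^m \Oph(\psi^+))| \leq \|\Oph(\psi^+)\, \bbI(h^2\DY)\|_{\tr_{L^2(Y)}} \leq C \sqrt{\mu(U_\epsilon)}\, h^{1-n} \leq C \epsilon\, h^{1-n}
\end{equation*}
for all sufficiently small $h$. For the far piece, Lemma \ref{l:compFIO} shows that $\SU^m \Oph(\psi^-)$ is a semiclassical FIO associated to the graph of $\kappa^m$ restricted to $\supp \psi^-$. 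Since $\supp \psi^- \cap F_m = \emptyset$, this canonical relation is disjoint from the diagonal in $T^*Y \times T^*Y$, so the Schwartz kernel is $O(h^\infty)$ in every $C^k$-norm on a neighborhood of the diagonal in $Y \times Y$, and hence $\tr_{\mch_Y}(\SU^m \Oph(\psi^-)) = O(h^\infty)$. Combining, $|\tr_{\mch_Y}(\SU^m \Oph(\psi))| \leq C \epsilon\, h^{1-n} + O(h^\infty)$ for each $\epsilon > 0$, which yields the desired $o(h^{1-n})$ bound after letting $h\to 0$ and then $\epsilon\to 0$.

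The main technical obstacle is the rigorous justification that the trace of the FIO $\SU^m \Oph(\psi^-)$, whose canonical relation lies in the complement of the diagonal, is $O(h^\infty)$. The cleanest route is to note that the semiclassical wavefront set of its Schwartz kernel is contained in the graph of $\kappa^m|_{\supp\psi^-}$, which is disjoint from the conormal of the diagonal inside $T^*(Y\times Y)$; restricting to the diagonal and integrating then gives the $O(h^\infty)$ estimate. This embodies the standard principle that traces of semiclassical FIOs localize to fixed points of the underlying canonical transformation, which is precisely where hypothesis (\ref{as:fp}) enters.
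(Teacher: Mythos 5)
Your argument is correct and is, in substance, the standard proof of the result the paper invokes at this point: the paper's own proof of Lemma \ref{l:trpowers} is a one-line reduction to Lemma \ref{l:compFIO}, hypothesis (\ref{as:fp}), and \cite[Proposition 7.1]{GHZ}, and that cited proposition (the trace of a quantized canonical transformation against a compactly microlocalized cutoff is $o(h^{1-n})$ when the fixed point set has measure zero) is itself proved by exactly the decomposition you use --- a small-measure neighborhood of the fixed point set handled by a Hilbert--Schmidt/Weyl bound together with $\|\SU^m\|=1$, plus a fixed-point-free piece handled by non-stationary phase in the trace integral. So you have made self-contained what the paper delegates to a citation, and your near-fixed-point estimate reuses the mechanism of Lemma \ref{l:psiepsilon}, which keeps the section internally consistent.

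One caution about the wording of the far piece: the intermediate claim that disjointness of the graph of $\kappa^m|_{\supp\psi^-}$ from the diagonal of $T^*Y\times T^*Y$ forces the Schwartz kernel to be $O(h^\infty)$ in a neighborhood of the diagonal of $Y\times Y$ is false as stated --- the canonical relation can still pass over base points $(y_0,y_0)$ via $(y_0,\eta)\mapsto(y_0,\eta')$ with $\eta'\neq\eta$, and there the kernel need not be pointwise small. What is true, and what your final paragraph correctly identifies, is that the (twisted) semiclassical wavefront set of the kernel avoids $N^*\Delta$; equivalently, the phases in the oscillatory-integral representation have no critical points on the diagonal over $\supp\psi^-$, so non-stationary phase applied to $\int K(y,y)\,dy$ gives $O(h^\infty)$ for the trace, not for the kernel itself. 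With that step phrased as in your last paragraph rather than the earlier sentence, the proof is complete.
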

\begin{proof}
 This follows from Lemma \ref{l:compFIO}, our hypothesis (\ref{as:fp}) on the fixed point set of $\kappa^m$, and \cite[Proposition 7.1]{GHZ}.
\end{proof}


\noindent {\em Proof of Theorem \ref{thm:equidist}}.
Given $\epsilon>0$ and $f\in C(\Sphere^1)$, use the 
density of the polynomials in $e^{i\theta}$ and $e^{-i\theta}$ in the continuous functions on $\Sphere^1$ to choose a  $q\in \C^\infty(\Sphere)$ with 
$q(e^{i\theta})=\sum_{j=-J}^Ja_j e^{ij\theta}$ for some $J\in \N$, $a_j\in \C$ and so
that $\sup |f(\theta) -q(\theta)|<\epsilon $.  Choose $\psi\in C_c^\infty(\mathcal{D}_{\kappa^J}\cap \mathcal{D}_{\kappa^{-J}})$ as guaranteed by Corollary \ref{c:epsclose}, applied with 
$m=J$.

Now 
\begin{equation}\label{eq:split}\tr_{\mch_Y}f(\SU)
= \tr _{\mch_Y}(f(\SU)- q(\SU))+\tr_{\mch_Y}(q(\SU)(I-\Oph(\psi))+ \tr_{\mch_Y}(q(\SU)\Oph(\psi)).
\end{equation}
Since by the Weyl law  $\mch_Y$ is  of dimension $h^{1-n}c_{n-1}\vol(Y) +O(h^{2-n})$ and $\|f(\SU)-q(\SU)\|<\epsilon$,
\begin{equation} \label{eq:est1}
|\tr_{\mch_Y} (f(\SU)- q(\SU))|< \epsilon h^{1-n}c_{n-1}\vol(Y) +O(h^{2-n}).
\end{equation}
By our choice of $\psi$ as in Corollary \ref{c:epsclose}, for $h>0$ sufficiently small
\begin{equation}\label{eq:est2}
|\tr_{\mch_Y}(q(\SU)(I-\Oph(\psi))| \leq \epsilon h^{1-n} \sup |q| \leq \epsilon h^{1-n} ( \epsilon+ \sup | f| ).
\end{equation}
Using $a_0=\frac{1}{2\pi}\int_0^{2\pi} q(e^{i\theta})d\theta$ and Lemma \ref{l:trpowers}, 
\begin{align} \nonumber \tr_{\mch_Y} (q(\SU)\Oph(\psi))& = \sum_{j=-J}^J a_j \tr_{\mch_Y}(\SU^j \Oph(\psi))\\
& =  \frac{1}{2\pi}\int_0^{2\pi} q(e^{i\theta})d\theta\; \tr_{\mch_Y} \Oph(\psi)+ o(h^{1-n}).
\end{align}
But by our choice of $\psi$ as in Corollary \ref{c:epsclose},  for $h$ sufficiently small 
\begin{equation*}
\left|\tr_{\mch_Y} \left(\Oph(\psi) -I_{\mchy}\right)\right|<\epsilon h^{1-n},
\end{equation*}
and since the dimension of $\mchy$ is $c_{n-1}\vol (Y) h^{1-n}+O(h^{2-n})$ by the Weyl law, 
\begin{equation} \label{eq:est3}
| \tr_{\mch_Y} \Oph(\psi) - c_{n-1}\vol (Y) h^{1-n}|<\epsilon h^{1-n}+ O(h^{2-n}).
\end{equation}
 Using (\ref{eq:est1}- \ref{eq:est3})  in (\ref{eq:split}), we find for $h$ sufficiently small
$$\left| \tr_{\mch_Y} f(\SU)- \frac{c_{n-1}}{2\pi}\vol (Y) h^{1-n} \int_0^{2\pi}f(e^{i\theta})d\theta \right | \leq 2 \epsilon h^{1-n} \left( c_{n-1}\vol(Y)  +\epsilon+ \sup | f| +1\right)   +o(h^{1-n})$$
implying 
$$\lim_{h\downarrow 0} \left|  h^{n-1}\tr_{\mch_Y}  f (\SU)- \frac{c_{n-1}}{2\pi}\vol (Y)  \int_0^{2\pi}f(e^{i\theta})d\theta \right |  \leq 2\epsilon \left( c_{n-1}\vol(Y)  +\epsilon+ \sup | f|  +1\right).$$
Since $\epsilon>0$ is arbitrary, this proves the theorem. \qed

\appendix

\section{Warped products with a bulge}\label{s:wpb}
This section collects two results for warped products with bulges, as introduced in Section \ref{ss:bulge}.  These results
are a resolvent estimate and a computation of the scattering map for the special case in which the
manifold is a surface of revolution.

We recall the setting.  Let $f\in C^\infty(\R;(0,\infty))$ satisfy $f(s)=1$ if $|s|>a$ and suppose $f$ has a
single nondegenerate critical point in $(-a,a)$, and this point is a maximum
of $f$.  Let $(Y_0,g_{Y_0})$ be a smooth compact Riemannian manifold, and set  $(X,g)=(\R \times Y_0, ds^2+ f^{4/(n-1)}g_{Y_0})$. 

\subsection{The Resolvent estimate for the warped product with a bulge}

Here we bound the microlocally cut-off resolvent on a warped product
with a bulge.
We give a result that is stronger than we need in terms of 
the spatial cut-off (a weight in $|s|$, rather than a compactly supported
function in $s$).  Our presentation uses a commutator
argument and is inspired by 
\cite{Vod, Dat} and references therein; see also \cite[Section 2]{CDI}.
\begin{lemma}\label{l:bulgeestimate}
Let $f$, $Y_0$, and $X$ be as described above.Then for any 
$\epsilon,\;\alpha>0$ there are $C_0=C_0(\epsilon,\alpha)$, $\;h_0=h_0(\epsilon,\alpha)>0$ so that
\begin{equation} \label{eq:specific}
 \|\mathbbold{1}_{[0,1-\epsilon]}(h^2\Delta_{Y_0})(1+|s|)^{-(1+\alpha)/2}(\Ph-1-i0)^{-1}
(1+|s|)^{-(1+\alpha)/2}\|\leq C_0h^{-1}\; \text{for $0<h\leq h_0$}.
\end{equation}
\end{lemma}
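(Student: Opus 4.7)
The plan is to exploit the warped-product structure to reduce the problem to a uniform family of one-dimensional resolvent estimates. Since $\sqrt{\det g}=f^2\sqrt{\det g_{Y_0}}$, the map $u\mapsto f u$ is unitary from $L^2(X,d\mathrm{vol}_g)$ to $L^2(\R\times Y_0,\,ds\,d\mathrm{vol}_{Y_0})$, and a direct computation gives
\[
\tilde P_h \;:=\; f\, P_h\, f^{-1} \;=\; -h^2\partial_s^2 \;+\; h^2\frac{f''(s)}{f(s)} \;+\; \frac{h^2}{f(s)^{4/(n-1)}}\Delta_{Y_0}.
\]
This conjugation commutes with multiplication by $\langle s\rangle^{-(1+\alpha)/2}$ and with $\mathbbold{1}_{[0,1-\epsilon]}(h^2\Delta_{Y_0})$, so it suffices to estimate the conjugated, weighted resolvent. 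Because $\tilde P_h$ commutes with $\Delta_{Y_0}$, decomposing on eigenspaces of $\Delta_{Y_0}$ with eigenvalues $\sigma_j^2$ reduces the question to a uniform bound on the 1D semiclassical Schr\"odinger operators
\[
L_j \;=\; -h^2\partial_s^2 \;+\; V_j(s), \qquad V_j(s) \;=\; h^2\frac{f''(s)}{f(s)} \;+\; \frac{h^2\sigma_j^2}{f(s)^{4/(n-1)}},
\]
for indices $j$ with $h^2\sigma_j^2\le 1-\epsilon$.

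The key structural observation is that the microlocal cutoff kills the trapping. Since $f\ge 1$ everywhere (with equality outside $[-a,a]$), $V_j(s)\le h^2\sigma_j^2+h^2\|f''/f\|_\infty$, so for $h$ small enough (depending only on $\epsilon$) we have $V_j(s)\le 1-\epsilon/2$ uniformly in $s$ and in admissible $j$. In particular, for each such $j$ the classical Hamiltonian $\rho^2+V_j$ is non-trapping at energy $1$ with uniform lower bound $1-V_j\ge \epsilon/2$.

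The analytic heart of the proof is then the one-dimensional limiting-absorption bound
\[
\bigl\|\langle s\rangle^{-(1+\alpha)/2}(L_j-1-i0)^{-1}\langle s\rangle^{-(1+\alpha)/2}\bigr\|_{L^2\to L^2}\;\le\; C\,h^{-1},
\]
uniformly in admissible $j$ and in $h\in(0,h_0]$, with $C,h_0$ depending only on $\epsilon,\alpha$. I would follow the positive-commutator strategy of \cite{Vod, Dat}: with conjugate operator $A=\tfrac{1}{2}(s D_s+D_s s)$, $D_s=-i\partial_s$, one computes
\[
[L_j,iA] \;=\; -2h^2\partial_s^2 \;-\; s V_j'(s) \;=\; 2(L_j-1) \;+\; 2(1-V_j) \;-\; sV_j'(s),
\]
in which $s V_j'$ is compactly supported in $[-a,a]$ (since $V_j$ is constant outside). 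Testing this identity against $u=(L_j-1-i\delta)^{-1}v$ and combining with the standard bookkeeping involving the weight $F=\langle s\rangle^{-(1+\alpha)}$, the uniform coercivity $2(1-V_j)\ge \epsilon$ absorbs the compactly supported error and produces the $h^{-1}$ bound. Finally, assembling the block-diagonal decomposition in $j$ yields the claimed estimate (\ref{eq:specific}).

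The principal obstacle will be obtaining the $h^{-1}$ rate with constants uniform across all admissible $j$ and with only a polynomial weight. The uniformity in $j$ is inherited from the uniform gap $1-V_j\ge \epsilon/2$ induced by the cutoff, while the $h^{-1}$ rate reflects the one-dimensional non-trapping semiclassical regime; the hypothesis $(1+\alpha)/2>1/2$ is precisely what makes the integration-by-parts bookkeeping against $F$ close, by ensuring the remainder terms produced when commuting $V_j$ through the weight remain $L^2$-integrable at infinity.
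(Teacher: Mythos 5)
Your reduction is exactly the paper's: conjugate by multiplication by $f$ (unitary onto $L^2(ds\,d\mathrm{vol}_{Y_0})$ since $\sqrt{\det g}=f^2\sqrt{\det g_{Y_0}}$), treat $h^2f''/f$ as an $O(h^2)$ perturbation, and separate variables to reduce to a family of one-dimensional weighted resolvent estimates for $-h^2\partial_s^2+\tau\varphi-1$, $\varphi=f^{-4/(n-1)}$, uniformly over $\tau=h^2\sigma_j^2\in[0,1-\epsilon]$. The gap is in the commutator step. With the dilation generator $A=\tfrac12(sD_s+D_ss)$ the relevant quantity on the energy shell is
\begin{equation*}
2(1-V_j)-sV_j'(s)\;=\;2\bigl(1-\tau\varphi\bigr)-\tau\, s\varphi'(s)+O(h^2).
\end{equation*}
For a bulge, $\varphi$ has a nondegenerate \emph{minimum} in $(-a,a)$, so (normalizing the critical point to $s=0$) $s\varphi'(s)\ge 0$ and the term $-\tau s\varphi'$ enters with the \emph{wrong} sign. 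Its size is of order $(1-\epsilon)\max(s\varphi')$, which depends on the steepness of $f$ and is in no way dominated by the coercive gap $2(1-\tau\varphi)\ge 2\epsilon$; for a steep bulge and $\tau$ near $1-\epsilon$ the sum above is negative on part of $[-a,a]$. So the claim that ``the uniform coercivity absorbs the compactly supported error'' fails, and the $h^{-1}$ bound does not follow from this multiplier. (Non-trapping at energy $1$ does guarantee that \emph{some} escape function works, but you must construct one, uniformly in $\tau$.)

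The paper's proof repairs exactly this point by replacing $s$ (equivalently your $w_1$ alone) with the multiplier $w=w_1\varphi^{\beta}$, $w_1(s)=\mathrm{sgn}(s)\,(1-(1+|s|)^{-\alpha})$ and $\beta=2/\epsilon$. One then computes
\begin{equation*}
w'-\tau(w\varphi)'=\varphi^{\beta-1}\Bigl(w_1'\varphi(1-\tau\varphi)+w_1\varphi'\bigl(\beta(1-\tau\varphi)-\tau\varphi\bigr)\Bigr)\;\ge\;\epsilon\,\varphi^{\beta}w_1',
\end{equation*}
because $w_1\varphi'\ge 0$ and $\beta(1-\tau\varphi)-\tau\varphi\ge \beta\epsilon-1\ge 1$ for $\tau\le 1-\epsilon$, $\varphi\le 1$. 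The factor $\varphi^{\beta}$ converts the problematic $\varphi'$ contribution into one with a favorable sign and a margin proportional to $\beta$, which is precisely what your choice of conjugate operator lacks. If you insert this $w$ into your integration-by-parts identity, the rest of your bookkeeping (the role of $\alpha>0$, uniformity in $\tau$, reassembly over $j$) goes through as you describe.
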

We emphasize that while $\epsilon>0$ is small, it is fixed here.

\begin{proof}

In this case 
\begin{equation}\label{eq:nicer}
h^2 \DX= f(s)^{-1} \left(-h^2\partial_s^2+h^2f''(s)/f(s) +h^2\Delta_{Y_0} f(s)^{-4/(n-1)}
\right) f(s).
\end{equation}
Since $f$ is bounded, and is bounded below away from $0$,
it suffices to study the resolvent of the operator in parentheses on the 
right hand side of (\ref{eq:nicer}).  To do so,
we will separate variables.  
Set $\varphi= f(s)^{-4/(n-1)}$.  We will 
show that for any $\alpha>0$ and $\epsilon>0$ there is a
$h_0=h_0(\epsilon,\alpha)$, $C_0=C_0(\epsilon,\alpha)$ so that 
\begin{multline}\label{eq:newgoal}
\|(1+|s|)^{-(1+\alpha)/2}(-h^2\partial_s^2 +\tau \varphi -1-i\delta)^{-1}(1+|s|)^{-(1+\alpha)/2}\|_{L^2(\R)\rightarrow L^2(\R)}\leq C_0h^{-1}\;\\ \text{for}\; 0<h<h_0,\; 0<\delta<1,\; 0\leq \tau \leq 1-\epsilon.
\end{multline}
Then using that this implies 
$\| h^2( f''/f) (-h^2\partial_s^2 +\tau \varphi -1-i\delta)^{-1}(1+|s|)^{-(1+\alpha)/2}\|=O(h)$, the estimate (\ref{eq:newgoal}) together with 
a separation of variables using (\ref{eq:nicer}) proves the lemma.

We give a proof of (\ref{eq:newgoal}) that is valid  uniformly
for all $\tau\in [0,1-\epsilon]$.  Without loss of generality we can assume that the 
maximum of $f$, and hence the minimum of $\varphi$, occurs at $s=0$
so that $s\varphi'(s)\geq 0$.  We 
also remark that $0<\varphi\leq 1$.  In order to simplify notation, 
we introduce  $Q_\tau:=-h^2 \partial_s^2+\tau \varphi-1$,
local to this proof.

Let $u\in H^2(\R)$ satisfy $u(s),\; u'(s)\rightarrow 0$ as $s\rightarrow \pm \infty$ 
and $(1+|s|)^{(1+\alpha)/2} (Q_\tau-i\delta)u\in L^2(\R)$.
Let $w\in C^1(\R;\R)$ be bounded, along with its first derivative.  
Now using inner products on $L^2(\R)$, add the equalities
\begin{align*}
\langle w' u, u\rangle & = -2\Re \langle w u, u'\rangle\\
h^2 \langle w' u', u' \rangle & = -2\Re \langle w h^2 u'', u'\rangle
\end{align*}
and 
$$ - \tau \langle (w\varphi)'u,u \rangle = 2\Re \langle \tau w \varphi u,u' \rangle$$
to get
\begin{equation}\label{eq:IBPidentity}
\langle w'u,u\rangle + h^2 \langle w' u',u'\rangle 
-\tau \langle (w\varphi)'u,u\rangle =
-2\Re \langle w (Q_\tau u -i\delta u), u'\rangle + 2\delta \Im 
\langle w u, u'\rangle .
\end{equation}
We wish to choose $w$ so that both $w'$ and $w'-\tau(w\varphi)'$ are nonnegative,
with $w'>0$.  To do so, set $w(s)=w_1(s)\varphi^\beta$, where $w_1(s)$ is 
the odd function that is given for $s>0$ by $w_1(s)=1-(1+s)^{-\alpha}$
and
$ \beta>0$ is  a constant to be chosen below.  The restriction
$\alpha, \beta>0$ ensures $w'>0$, since $w_1'>0$ and $w_1(s)\varphi'(s)\geq 0$.  
We compute
$$w'-\tau (w\varphi)'=
\varphi^{\beta-1}\left( w_1'\varphi(1-\tau\varphi)+w_1\varphi'(\beta(1-\tau \varphi)-\tau \varphi)\right).$$
Choosing $\beta= 2/\epsilon$ and using $\tau\leq 1-\epsilon$,
$0<\varphi\leq1$,  yields
\begin{align}\nonumber
w'-\tau (w\varphi)' &\geq \varphi^{\beta-1}\left( w_1'\varphi \epsilon  + 
w_1\varphi'( \beta \epsilon -1+\epsilon)\right)\\
 & \geq \varphi^{\beta} w_1' \epsilon.
\end{align}
Since $\epsilon>0$ is fixed
and  the minimum of $\varphi$ is strictly positive, there is a
$c_0>0$, independent of $\tau\in[0,1-\epsilon]$ so that 
\begin{equation}
w'-\tau (w\varphi)' \geq c_0 w_1' = c_0\alpha (1+|s|)^{-(1+\alpha)}.
\end{equation} 

Using these in (\ref{eq:IBPidentity}) and estimating the right hand 
side of  (\ref{eq:IBPidentity}) using the Cauchy-Schwarz inequality
yields, for some constant $C$ independent
of $h$, $\tau\in[0,1-\epsilon]$ and $\delta>0$, and any $\gamma >0$
\begin{equation}\label{eq:intermediate}
\| \sqrt{w_1'} u\|^2 + h^2 \| \sqrt{w'}u'\|^2 
\leq \frac{C}{\gamma h^2}\| (w/\sqrt{w'})(Q_\tau u -i\delta u)\| ^2 + C \gamma h^2 \|\sqrt{w'} u'\|^2 + C
\delta \| u\| \|u'\|.
\end{equation}
We will use below that we can simplify this somewhat, by using that
$w$ is bounded and that $w'\geq c_1 w_1'$ for some $c_1>0$.
Now 
\begin{multline}
\|u'\|^2 = \frac{1}{h^2} \langle -h^2 u'', u\rangle 
= \Re \frac{1}{h^2}\langle (Q_\tau-i\delta)u, u \rangle + \frac{1}{h^2} \langle
(1-\tau \varphi)u,u\rangle \\ \leq 
\frac{1}{h^2} \left \| \frac{1}{\sqrt{w_1'}}(Q_\tau-i\delta)u\right\| 
\left\|
 \sqrt{w_1'}u \right\| 
+\frac{1}{h^2} \left \|u\right\|^2
\end{multline}
and 
$$\delta \|u\|^2 =\Im \langle (Q_\tau -i \delta)u, u\rangle
\leq \left \| \frac{1}{\sqrt{w_1'}}(Q_\tau -i\delta)u\right\| \left\| \sqrt{w_1'} u \right\|
$$
giving, if $0<\delta\leq1$ and $\gamma>0$
\begin{equation}\label{eq:uu'est}
\delta \|u\| \| u'\| \leq \frac{1}{h} \left\| \frac{1}{\sqrt{w_1'}}(Q_\tau -i\delta)u\right\| \left\| \sqrt{w_1'} u \right\| (1+\delta)^{1/2}
\leq \frac{1}{\gamma h^2} \left \| \frac{1}{\sqrt{w_1'}}(Q_\tau -i\delta)u\right\|^2
+ \gamma \left\| \sqrt{w_1'} u \right\|^2.
\end{equation}
Using this in (\ref{eq:intermediate}) and simplifying as indicated 
above yields, for some constant 
$C$ independent of $\tau\in [0,1-\epsilon]$ and $\delta\in (0,1]$,
$$\| \sqrt{w_1'} u\|^2 + h^2 \| \sqrt{w'}u'\|^2 
\leq  \frac{C}{\gamma h^2} \left \| \frac{1}{\sqrt{w_1'}}(Q_\tau -i\delta)u\right\|^2 
+C \gamma h^2 \|\sqrt{w'} u'\|^2
+C \gamma \| \sqrt{w_1'} u \|^2.
$$
Choosing $\gamma $ sufficiently small, we can absorb the second and third
terms on the right into the corresponding terms on the left, 
yielding, on using estimates for $w'$, $w_1'$ and with a new constant $C$
$$\| (1+|s|)^{-(1+\alpha)/2} u\|^2 + h^2 \| (1+|s|)^{-(1+\alpha)/2}u'\|^2 
\leq  \frac{C}{ h^2} \left \|(1+|s|)^{(1+\alpha)/2}(Q_\tau -i\delta)u\right\|^2 .
$$
Dropping the second term on the left and applying the 
resulting inequality with $u=(Q_\tau-i\delta)^{-1}(1+|s|)^{-(1+\alpha)/2}v$ for
$v\in L^2(\R)$
proves
(\ref{eq:newgoal}).
\end{proof}

We remark that the
estimate (\ref{eq:newgoal}) holds for any fixed $\tau \in [0,1-\epsilon]$
from well-known non-trapping results, 
 e.g. \cite{RT,GM,Bur}.  In fact, a rescaling and these known non-trapping
results prove the estimate uniformly
for 
$\tau\in [\epsilon', 1-\epsilon]$
for any fixed $\epsilon,\; \epsilon'>0$.
We are unaware, however, of a result that directly implies (\ref{eq:newgoal})
uniformly for all $\tau \in [0,1-\epsilon]$, so we have chosen to give a direct proof
here, valid for all values of $\tau$ in this interval.

\subsection{Scattering map for a surface of revolution with a bulge} \label{ss:smwp}

In this section we compute the scattering map for a surface of revolution with a bulge, as in Section \ref{ss:bulge}.   We use the function $f$ and manifold $X$ introduced above
(Section \ref{s:wpb} or \ref{ss:bulge}), but 
specialize to the case $Y_0=\Sphere^1$ and $X=\R \times \Sphere^1$.


It will be convenient to use a coordinate $\theta\in \R$ on $\Sphere^1$, identifying points which differ by an integral multiple of $2\pi$.   The manifold  $X$ has two connected ends and $Y=\Sphere^1_L\sqcup \Sphere^1_R$, where $\Sphere^1_L$ corresponds to 
$s\rightarrow -\infty$, the ``left" end. On $\Sphere^1_L$ and $\Sphere_R^1$ we use the coordinate $\theta$ which is inherited from the factor of $\Sphere^1$ in $X$.  

\begin{lemma}\label{l:kappabulge} Let $X=\R_s\times \Sphere^1_\theta$ be a surface of revolution with a bulge as described above, and let $\{r=0\}=\{s=-a-4\}\sqcup\{s=a+4\}$.  Then if  $ (\theta_-,\eta_-)\in T^* \Sphere^1_R$ with $|\eta_-|<1$,
$$ \kappa(\theta_-,\eta_-)=\left(\theta_-+\eta_-\int_{-a-4}^{a+4}\frac{1}{f^2(\ts)\sqrt{(f(\ts))^4-\eta_-^2}}
d\ts,\eta_-\right)\in T^*\Sphere^1_L.$$
On the other hand, if  
$ (\theta_-,\eta_-)\in T^* \Sphere^1_L$ and $|\eta_-|<1$,  then $\kappa(\theta_-,\eta_-)\in T^*\Sphere^1_R$
and it is given by the same expression.
\end{lemma}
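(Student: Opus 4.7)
The plan is to work in global coordinates $(s,\theta)\in\mathbb{R}\times\mathbb{S}^1$ on $X$, with dual fiber coordinates $(\sigma,\eta)$. Since we take $n=2$ and $Y_0=\mathbb{S}^1$, the metric is $ds^2+f(s)^4 d\theta^2$, so the principal symbol is
\[
p(s,\theta,\sigma,\eta) \;=\; \sigma^2+f(s)^{-4}\eta^2.
\]
Hamilton's equations give $\dot s=2\sigma$, $\dot\theta=2f^{-4}\eta$, $\dot\sigma=4 f^{-5}f'\eta^2$, $\dot\eta=0$. Thus $\eta$ and $p$ are conserved along trajectories, and we fix $p=1$.

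For $(\theta_-,\eta_-)\in T^*\mathbb{S}^1_R$ with $|\eta_-|<1$, the convention $\{r=0\}=\{s=a+4\}$ on the right end means the $r$ coordinate there agrees with $s-(a+4)$; in particular $\partial_r=\partial_s$, so the initial condition $\rho=-\sqrt{1-|\eta_-|^2}$ reads $\sigma(0)=-\sqrt{1-|\eta_-|^2}<0$. The next step is to verify that this trajectory crosses the bulge and reaches the opposite end. Because $f$ has its single critical point as a maximum in $(-a,a)$ and equals $1$ outside, we have $f\geq 1$ and therefore
\[
\sigma(t)^2 \;=\; 1-f(s(t))^{-4}\eta_-^2 \;\geq\; 1-\eta_-^2 \;>\;0,
\]
so $\sigma$ is bounded away from $0$ and, being continuous, remains negative; hence $\dot s<0$ and $s$ decreases monotonically through $X_C$, exiting into the left end and first meeting $\{r=0\}$ at $s=-a-4$. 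On the left end, $r$ corresponds to $-s-(a+4)$, so $\partial_r=-\partial_s$ and the outgoing momentum satisfies $\rho=-\sigma=\sqrt{1-|\eta_-|^2}$, consistent with $\eta_+=\eta_-$ and $|\eta_+|=|\eta_-|$ in Definition \ref{def:smap}.

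To compute $\theta_+$ I would substitute $dt=ds/(2\sigma)$ in the equation for $\dot\theta$; using $\sigma=-\sqrt{1-f^{-4}\eta_-^2}$ and reversing the limits, this yields
\[
\theta_+-\theta_- \;=\; \int_0^{t_+}\!2 f(s(t))^{-4}\eta_-\,dt
\;=\; \int_{-a-4}^{a+4}\frac{f(s)^{-4}\eta_-}{\sqrt{1-f(s)^{-4}\eta_-^2}}\,ds
\;=\; \eta_-\!\int_{-a-4}^{a+4}\!\frac{d\ts}{f(\ts)^2\sqrt{f(\ts)^4-\eta_-^2}},
\]
which is the claimed formula. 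The case $(\theta_-,\eta_-)\in T^*\mathbb{S}^1_L$ is handled by the symmetry $s\mapsto -s$ of the problem (the bulge is unchanged up to this reflection, which interchanges the two ends and reverses $\sigma$), so the same integral formula applies.

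The main obstacle is purely bookkeeping: correctly relating the end coordinate $r$, which increases outward on both ends, to the global $s$, which is monotone on $X$. The signs of $\partial_r$ versus $\partial_s$ on the two ends must be tracked carefully so that the required incoming condition $\rho=-\sqrt{1-|\eta_-|^2}$ at $\{r=0\}$ translates to the correct sign of $\sigma$, and so that the outgoing condition $\rho=+\sqrt{1-|\eta_+|^2}$ at the other end is automatically satisfied. Apart from this, the argument is elementary because the warped product structure separates the variables and the conservation laws for $\eta$ and $p$ reduce the scattering map to a single quadrature in $s$.
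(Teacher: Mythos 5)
Your proof is correct and follows essentially the same route as the paper's: same Hamiltonian, same conservation laws ($\eta$ and $p$), the same monotonicity argument showing $\sigma$ stays negative, and the same change of variables $dt=ds/(2\sigma)$ reducing the map to a single quadrature. One small caveat: the paper does not assume $f$ is even (only that it has a single nondegenerate maximum in $(-a,a)$), so the reflection symmetry $s\mapsto -s$ you invoke for the left-end case need not hold; however, repeating the identical quadrature with $s$ increasing and $\sigma>0$ yields the same integral, which is how the second assertion should be (and in the paper implicitly is) derived.
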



\begin{proof}
We use the coordinates $(s,\theta,\rho,\eta)$ on $T^*X$.  (We are going back to the notation used at 
the beginning of the paper where the spatial variables come first, followed by the corresponding fiber variables.)
The principal symbol of the Laplacian is $p=\rho^2+(f(s))^{-4} \eta^2$.  Thus the equations for the Hamiltonian flow are 
\begin{equation}\label{eq:hameq}\begin{array}{ll}
\dot{s}=2\rho  \hspace{10mm}& \dot{\theta}=2(f(s))^{-4}  \eta\\
\dot{\rho}= 4(f(s))^{-5} f'(s)  \eta^2 & \dot{\eta}=0.
\end{array}
\end{equation}
Denote the initial conditions by $(s_0,\theta_0,\rho_0,\eta_0)$, and note that  $\eta$ is constant under the Hamiltonian flow, while $s$ and $\rho$ are 
independent of $\theta_0$.  Thus, denoting the Hamiltonian flow by $\Phi_t$, we have
$$\Phi_t(s_0,\theta_0,\rho_0,\eta_0)=(s(t, s_0,\rho_0,\eta_0), \theta(t,s_0,\theta_0, \rho_0,\eta_0),\rho(t,s_0, \rho_0,\eta_0), \eta_0).$$

We shall prove the first equality of the lemma; the second can be derived from the first.  Thus we wish to consider initial data
\begin{equation}
\label{eq:initialdata}
(s_0,\theta_0,\rho_0,\eta_0)=(a+4, \theta_0, -\sqrt{1-\eta_0^2},\eta_0),\; \text{where $|\eta_0|<1$}.
\end{equation}
Since $p$ is constant under the Hamilton flow, $\rho^2 +f^{-4}(s) \eta^2= \rho_0^2 + \eta_0^2= 1$ using that the initial data are as in (\ref{eq:initialdata}).  Thus
since $f(s)\geq 1$ and $\rho_0<0$, $\rho = -( 1-f^{-4}(s)\eta^2)^{1/2}$.  Using (\ref{eq:hameq}) shows that $s$ is a strictly decreasing function of $t$ for such initial data.  Thus
$\kappa(\theta_0,\eta_0)\in T^*\Sphere^1_L$, 
and we wish to find $\theta( t_{-a-4},s_0,\theta_0, \rho_0,\eta_0)$ where $t_{-a-4}$ is the value of $t$ for which $s(t, s_0,\rho_0,\eta_0)=-a-4$.  This value of $t$ depends on 
$\eta_0$, but we suppress this in our notation.  Using (\ref{eq:hameq}), 
\begin{equation}\label{eq:thetadiff}
\theta( t_{-a-4},s_0,\theta_0, \rho_0,\eta_0)-\theta_0=\eta_0\int_0^{t_{-a-4}}2 f^{-4}(s(t,s_0,\rho_0, \eta_0))dt.
\end{equation}

To evaluate the integral in (\ref{eq:thetadiff}) we shall think of $s$, rather than $t$, as the independent variable, which works since
$s$ is a strictly decreasing function of $t$.  Using (\ref{eq:hameq}) to 
find the derivative of $t$ with respect to $s$ gives
$$\theta( t_{-a-4},s_0,\theta_0, \rho_0,\eta_0)-\theta_0 =2\eta_0\int_{a+4}^{-a-4} f^{-4}(\ts)\frac{1}{2\rho (\ts)} d\ts = 
\eta_0 \int^{a+4}_{-a-4} f^{-4}(\ts)\frac{1}{\sqrt{1-f^{-4}(\ts) \eta^2_0}} d\ts $$
where we use $\ts$ as a variable to emphasize it is not a function of $t$ here.  

\end{proof}
A similar, but more complicated, computation can be made for an hourglass-type surface of revolution.

Using Lemma \ref{l:kappabulge} we can see that for a surface of revolution with a bulge the scattering map $\kappa$ satisfies Hypothesis 3 of Section \ref{s:eps}.  Indeed, it
is clear that for any $m\in \Z$, $\kappa^{2m+1}$ has no fixed points.   Moreover, for each fixed value of $\eta_-$, the $\theta$ component of 
$\kappa^{2m}(\bullet,\eta_-)$ is a rotation
by $m \delta_\theta(\eta_-)$, where 
$\delta_\theta(\eta_-):= 2\eta_-\int_{-a-4}^{a+4}\frac{1}{f^2(\ts)\sqrt{(f(\ts))^4-\eta_-^2}}d\ts.$  Thus fixed points of $\kappa^{2m}$
correspond to values of $\eta_-$ so that
$m \delta_\theta(\eta_-)$ is an integral multiple of $2\pi$.  But since $\delta_\theta$ is a smooth, strictly increasing function of $\eta_-\in (-1,1)$, 
for $m\not = 0$ this can happen only for 
isolated vales of $\eta_-$, with accumulation points only at $\eta_-=\pm 1$.

\end{document}